\documentclass[12pt]{amsart}
\usepackage{amsfonts,amsmath,amssymb,amscd,amsthm,enumerate}
\usepackage{amsrefs,comment}
\usepackage{geometry}
\usepackage{xcolor}
\usepackage{adjustbox}
\usepackage[all]{xy}
\usepackage{stmaryrd}
\newtheorem{theorem}{Theorem}
\newtheorem{prop}[theorem]{Proposition}
\newtheorem{lem}[theorem]{Lemma}
\newtheorem{cor}[theorem]{Corollary}
\theoremstyle{definition}

\newtheorem{rem}[theorem]{Remark}
\newtheorem{mydef}[theorem]{Definition}
\newtheorem{example}[theorem]{Example}
\AtBeginEnvironment{example}{%
  \pushQED{\qed}%
}\AtEndEnvironment{example}{\popQED}

\renewcommand{\epsilon}{\varepsilon}
\def\lbra{{[}\!{[}}
\def\rbra{{]}\!{]}}

\def\C{\mathbb{C}}

\def\R{\mathbb{R}}

\def\T{\mathbb{T}}

\def\a{\mathbf{a}}

\def\c{\mathbf{c}}

\def\b{\mathbf{b}}
\def\<{\langle}
\def\>{\rangle}

\def\g{\mathfrak{g}}

\def\init{{\rm in}}

\usepackage{multicol} 

\begin{document}
\title[Tensorial Constraints for Commuting Endomorphisms]{Tensorial Constraints for Commuting Endomorphisms of the Generalized Tangent Bundle}
\author{Marco Aldi, Sergio Da Silva, and Daniele Grandini}
\begin{abstract} {In this paper we consider families of mutually commuting endomorphisms of the generalized tangent bundle. We identify natural tensorial constraints extending the notion of a generalized K\"ahler structure to endomorphisms that are not necessarily generalized almost complex structures. These tensors form ideals whose generators we explicitly construct and study using Gr\"obner basis techniques.}
\end{abstract}

\providecommand{\keywords}[1]
{
  \small	
  \textbf{\textit{Keywords---}} #1
}

\keywords{Generalized Geometry, polynomial structures, Gr\"obner bases, Knutson ideals}
\subjclass[2020]{Primary: 53D18; Secondary: 13P10, 13A35, 14P99}
\maketitle

\section{Introduction}

Originally motivated by string theory as a framework to unify $B$-field transformations and diffeomorphisms, the generalized tangent bundle \cite{Hitchin03} has emerged as an indispensable tool in differential geometry. This remarkable geometric object naturally accommodates generalized complex structures \cite{Gualtieri11}, which are integrable skew-symmetric endomorphisms of the generalized tangent bundle that include both complex and symplectic structures as particular cases. Generalized CRF-structures \cite{Vaisman08} provide a broader class of endomorphisms of the generalized tangent bundle and, in addition to generalized complex structures, include odd-dimensional geometries like normal almost contact and cosymplectic structures. Generalized CRF-structures were further extended in \cite{AldiGrandini22} (see also \cites{BlagaNannicini20, EtayoGomezSantamaria22, EtayoGomezSantamaria24}) to generalized polynomial structures: skew-symmetric endomorphisms satisfying a constant-coefficient polynomial equation. A central property of these structures is integrability which specifies their compatibility with the natural Courant-Dorfman bracket. While preliminary notions of integrability for generalized polynomial structures were explored in \cite{AldiGrandini22}, in \cite{AldiDaSilvaGrandini24} we proved that the vanishing of the shifted Courant-Nijenhuis torsion is the strongest tensorial integrability condition that can be imposed uniformly across all generalized polynomial structures, without requiring adaptation to the specific underlying polynomial.

In this paper, we continue the work of \cite{AldiDaSilvaGrandini24} and address the case of several commuting endomorphisms of the generalized tangent bundle. We allow for the possibility that some of these endomorphisms may be skew-symmetric (e.g.\ generalized complex structures) or symmetric (e.g.\ generalized Riemannian metrics) with respect to the tautological inner product on the generalized tangent bundle. The central question that we address is: what possible tensorial conditions can one impose for the compatibility of the commuting endomorphisms with the Courant-Dorfman bracket? Just as in the case of a single endomorphism, we show that these compatibility conditions form an ideal in a suitable polynomial ring. Our main result is the explicit identification of a natural set of generators for this ideal. These generators come in two basic types: cubic and quadratic. The cubic generators can be thought of as direct generalizations of the shifted Courant-Nijenhuis torsion to the case of several commuting endomorphisms. The quadratic generators appear in the ideal only if the commuting family consists of at least two symmetric endomorphisms, and they have no apparent analogue for a single endomorphism. Both types can be constructed out of a natural bivariate version of the Courant-Nijenhuis torsion which we call the {\it semiconcomitant}.  

The proof of our main theorem uses Gr\"obner basis techniques, a crucial tool in commutative algebra used to compute elimination of variables, kernels of ring homomorphisms, ideal membership, projective closures, syzygies, primary decomposition, and normalization, just to name a few \cite{E}. In our case, greatly extending the techniques of \cite{AldiDaSilvaGrandini24}, the use of Gr\"obner bases to compute the intersections of ideals will be especially important. While the Gr\"obner techniques we employ in this manuscript are well-known to algebraists, the direct application of these techniques to the study of the generalized tangent bundle is novel. 

One surprising result is the connection of these compatibility conditions with Frobenius splitting. This positive characteristic property has implications for algebraic varieties defined over a field of characteristic 0, made precise with the notion of a Knutson ideal \cite{Seccia}. In particular, a Frobenius split variety is reduced, has rational singularities, and has certain local cohomology vanishing properties \cite{BK}. Many well-known determinantal varieties, such as Schubert varieties and Kazhdan-Lusztig varieties \cite{BK}, certain $K$-orbit closures \cite{BS}, and some Hessenberg varieties are Frobenius split \cite{CDSHR}.

The paper is organized as follows. In Section \ref{sec:2}, we collect some basic facts about the generalized tangent bundle and Gr\"obner bases to make the paper as self-contained as possible. In Section \ref{sec:3}, we translate the condition of tensoriality for expressions involving the endomorphisms and the Courant-Dorfman bracket into a statement about polynomial membership in a specific ideal. This ideal depends solely on the number of commuting endomorphisms and on their signature, which serves as a bookkeeping device to record which endomorphisms are symmetric and which are skew-symmetric. Section \ref{sec: tensorial} contains our main result, namely the identification of the above mentioned cubic and quadratic generators of this ideal. In Section \ref{sec:5}, we specialize to the case in which all commuting endomorphisms are semisimple. With this assumption we are able to recast the vanishing of the tensorial ideal in terms of involutivity constraints on the common eigenbundles for the commuting endomorphisms. In the Appendix we give an alternate proof, also using Gr\"obner bases, of our main theorem in the special case where all commuting endomorphisms are symmetric.

\section{Preliminaries}\label{sec:2}

\subsection{The Generalized Tangent Bundle}

Throughout the paper, $M$ will be a smooth manifold. We denote by $\Omega_M^k$ the space of smooth $k$-forms on $M$. Recall that the generalized tangent bundle of $M$ is the vector bundle $\T M= TM \oplus T^*M$ i.e.\ the direct sum of the tangent bundle $TM$ and the cotangent bundle $T^*M$ of $M$. The generalized tangent bundle is canonically equipped with the following structures:

\begin{enumerate}[i)]
\item the {\it anchor map}, defined as the projection map $\pi:\T M\rightarrow TM$; 
\item the {\it tautological inner product} $\langle \cdot,\cdot \rangle$, which is the nondegenerate, symmetric $\Omega_M^0$-bilinear map defined by
\begin{equation*}
    \langle X+\alpha, Y+\beta\rangle=\frac{1}{2}\left(\alpha(Y)+\beta(X)\right)
\end{equation*}
for all sections $X,Y\in \Gamma(TM)$ and $\alpha,\beta\in \Omega_M^1$;
\item the {\it Courant-Dorfman bracket}, which is the $\R$-bilinear map
\[
\lbra\cdot ,\cdot \rbra:\Gamma(\T M)\otimes_{\R}\Gamma(\T M)\rightarrow \Gamma(\T M)
\]
defined by requiring
\begin{equation*}
\lbra X+\alpha, Y+\beta\rbra=[X,Y]+{\mathcal L}_X\beta-i_Yd\alpha,
\end{equation*}
for all $X,Y\in \Gamma(TM)$ and $\alpha,\beta\in \Omega_M^1$.
\end{enumerate} 

\noindent Here $\mathcal{L}_X$ is the Lie derivative along the vector field $X$, and $i_Y$ is the interior derivative along $Y$. It is well-known \cite{Gualtieri11} that the data $(\T M,\pi, \langle\cdot ,\cdot \rangle, \lbra\cdot ,\cdot \rbra)$ defines a {\it Courant algebroid}. In particular, the following property is satisfied:
\begin{equation}\label{eq:Courant}\pi(\a)\langle\b,\c\rangle=\langle\lbra\a,\b\rbra,\c\rangle+\langle\lbra\a,\c\rbra,\b\rangle=\langle\lbra\b,\c\rbra,\a\rangle+\langle\lbra\c,\b\rbra,\a\rangle\, ,
\end{equation} 
\noindent for all $\a,\b,\c\in \Gamma(\T M)$, where $\pi(\a)\langle\b,\c\rangle$ denotes the usual action of the vector field $\pi(\a)$ on the smooth function $\langle\b,\c\rangle$. 
Property \eqref{eq:Courant} implies the Leibniz identities
\begin{equation}\label{eq:Leibniz}
\lbra\a,f\b\rbra=f\lbra\a,\b\rbra+\pi(\a)(f)\b, \qquad
\lbra f\a,\b\rbra=f\lbra\a,\b\rbra-\pi(\b)(f)\a+2\langle\a,\b\rangle df
\end{equation}
for all $\a,\b\in \Gamma(\T M)$ and $f\in \Omega_M^0$.
\begin{mydef}  Let $\Theta$ be the $\Omega_M^0$-module of $\R$-trilinear forms
\begin{equation}
\tau:\Gamma(\T M)\otimes_{\mathbb R} \Gamma(\T M)\otimes_{\mathbb R}\Gamma(\T M)\rightarrow \Omega_M^0\,.
\end{equation}
Distinguished among all elements of $\Theta$ is the {\it Courant element}  $\tau_C$ such that \begin{equation}
\tau_C(\a,\b,\c)=\langle\lbra\a,\b\rbra,\c \rangle
\end{equation}
for all $\a,\b,\c\in \Gamma(\T M)$ (see \cite{Courant90}).
\end{mydef}

\subsection{Commuting endomorphisms}
Throughout the paper, $N$ will denote a positive integer.
\begin{mydef}
Let $\epsilon=(\epsilon_1,\dots,\epsilon_N)$ such that $\epsilon_i\in \{\pm 1\}$. A \emph{commuting family of signature} $\epsilon$ is an ordered $N$-tuple $\phi=(\phi_1,\dots,\phi_N)$ such that, for all $i,j\in\{1,\dots, N\}$
\begin{enumerate}[i)]
    \item $\phi_i\in \Gamma({\rm End}(\T M))$;
    \item $\phi_i^*=\epsilon_i\phi_i$;
    \item $\phi_i\phi_j=\phi_j\phi_i$.
\end{enumerate}
Here $\phi^*$ denotes the adjoint with respect to the tautological inner product. Therefore, if $\epsilon_i=1$ (resp.\ $\epsilon_i=-1$) then $\phi_i$ is symmetric (resp.\ skew-symmetric).
\end{mydef}
\begin{example}
Consider the case of two commuting generalized almost complex structures $\phi_1$, $\phi_2$ on $M$. Since $\phi_1$ and $\phi_2$ are both skew-symmetric, $(\phi_1,\phi_2)$ is a commuting family of signature $(-1,-1)$. Equivalently, the same information can be repackaged by defining the symmetric endomorphism $\mathcal G=\phi_1\phi_2$. Then the commuting pair $(\phi_1,\mathcal G)$ has signature $(-1,1)$. 
\end{example}

We will denote the set of all commuting families of signature $\epsilon$ by $\mathcal{F}_{\epsilon}^N$ (or $\mathcal{F}_{\epsilon}$ when $N$ is understood). Note that $\mathcal{F}_{\epsilon}^N$ is an $\Omega_M^0$-module with respect to the usual operations on $N$-tuples.

\subsection{A polynomial action}\label{sec: poly action}
Throughout the paper, for every pair of $N$-tuples $a,b$, we will use the following notation to denote the product:
$$a^b=a_1^{b_1}a_2^{b_2}\dots a_N^{b_N}.$$
Consider the ring $$\mathcal{R}=\mathcal{R}_N(x,y,z):=\R[x_{1},\ldots, x_N, y_{1},\ldots, y_N, z_{1},\ldots, z_N].$$
Using the above convention, elements of $\mathcal R$ will be  expressed in the form
\begin{equation}\label{eq:5bis}
P(x,y,z)=\sum_{I,J,K}a_{I,J,K}x^Iy^Jz^K\,,
\end{equation} 
where
\begin{enumerate}[i)]
    \item $I,J,K$ are ordered $N$-tuples of non negative integers:
    $$I=(i_1,\dots,i_N), \qquad J=(j_1,\dots,j_N),\qquad K=(k_1,\dots,k_N);$$
    \item $x, y, z$ are ordered $N$-tuples of variables:
    $$x=(x_1,\dots, x_N), \qquad y=(y_1,\dots, y_N), \qquad z=(z_1,\dots, z_N);$$
    \item $a_{IJK}\in \R$ for all $I,J,K$.
\end{enumerate}
 Every commuting family $\phi\in\mathcal{F}_{\epsilon}$ induces an $\R$-linear action on the module $\Theta$:
\begin{equation}\label{eq:6}
\bullet_{\phi}:\mathcal{R}\times \Theta\longrightarrow \Theta,\qquad\qquad (P,\tau)\longmapsto P\bullet_{\phi}\tau\,,
\end{equation}
uniquely determined by requiring
\begin{align*}
(x_i\bullet_{\phi} \tau)(\a,\b,\c)&=\tau(\phi_i\a,\b,\c)\,;\\(y_i\bullet_{\phi} \tau)(\a,\b,\c)&=\tau(\a,\phi_i\b,\c)\,;\\(z_i\bullet_{\phi} \tau)(\a,\b,\c)&=\tau(\a,\b,\phi_i\c)
\end{align*}
for all $\tau\in\Theta$. The following propositions summarize how the polynomial action \eqref{eq:6} interacts with the action of the symmetric group $\mathcal{R}$ and with linear combinations.
\begin{prop}\label{prop: poly properties} Fix the usual left action of the symmetric groups $S_n$ on ordered $n$-tuples, and define the following actions:
\begin{enumerate}[a)]
\item for all $P\in \mathcal{R}$ and $\sigma\in S_3$, let $\sigma P$ be the polynomial obtained by applying $\sigma$ to the triple of variables $(x_i,y_i,z_i)$ for all $i\in\{1,\dots,N\}$;
\item for all $P\in \mathcal{R}$ and $\rho\in S_N$, let $P_{\rho}$ be the polynomial
$$P_{\rho}(x,y,z)=P(\rho x, \rho y, \rho z);$$

\item for all $\tau\in \Theta$, let $(\sigma \tau)(\a,\b,\c)=\tau\left(\sigma^{-1}(\a,\b,\c)\right)$ for all $\a,\b,\c\in \Gamma(\T M)$.
\end{enumerate}
Then, we have
\begin{enumerate}[i)]
\item $\displaystyle\sigma (P\bullet_{\phi} \tau)=(\sigma P)\bullet_{\phi} (\sigma \tau)$;
\item $\displaystyle P\bullet_{\rho\phi} \tau=P_{\rho}\bullet_{\phi} \tau,$
\end{enumerate}
for all $\phi\in\mathcal{F}_{\epsilon}$, $P\in \mathcal{R}$,  and $\tau\in \Theta$.
\end{prop}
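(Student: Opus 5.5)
Both identities are $\R$-linear in $P$, and the action $\bullet_\phi$ is multiplicative ($(PQ)\bullet_\phi\tau=P\bullet_\phi(Q\bullet_\phi\tau)$, because the $\phi_i$ commute). So the plan is to reduce to monomials $P=x^Iy^Jz^K$ and then compute directly from the defining relations of $\bullet_\phi$. For such a monomial the definition gives the explicit formula
\[
(x^Iy^Jz^K\bullet_\phi\tau)(\a,\b,\c)=\tau(\phi^I\a,\phi^J\b,\phi^K\c),
\]
where $\phi^I=\phi_1^{i_1}\cdots\phi_N^{i_N}$. This composite is well defined regardless of the order of the factors, precisely because $\phi_i\phi_j=\phi_j\phi_i$. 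The first step is to record this formula, checked by induction on the total degree; everything else follows by comparing such expressions.

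For (ii), let $\rho$ act on $N$-tuples by $(\rho\phi)_i=\phi_{\rho^{-1}(i)}$, which is the usual left action. We must be consistent: with this convention, $(\rho x)_i=x_{\rho^{-1}(i)}$. It suffices to check the claim on the generators $x_i,y_i,z_i$ and then invoke multiplicativity:
\begin{itemize}
\item On the left, $x_i\bullet_{\rho\phi}\tau$ inserts $(\rho\phi)_i=\phi_{\rho^{-1}(i)}$ into the first slot.
\item On the right, $(x_i)_\rho=x_{\rho^{-1}(i)}$, and its $\bullet_\phi$-action also inserts $\phi_{\rho^{-1}(i)}$ into the first slot.
\end{itemize}
The two agree, and the same holds for $y_i$ and $z_i$. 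One remark is needed here: $P\mapsto P_\rho$ is a ring automorphism, so it respects products. In addition, $\rho\phi$ is again a commuting family, of signature $\rho\epsilon$ rather than $\epsilon$, so the left side makes sense.

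For (i), again take $P$ a monomial. Write $(\a_1,\a_2,\a_3)=(\a,\b,\c)$ and $(u^{(1)},u^{(2)},u^{(3)})=(x,y,z)$, so that
\[
P=\prod_s (u^{(s)})^{L_s},\qquad P\bullet_\phi\tau\,(\a_1,\a_2,\a_3)=\tau(\phi^{L_1}\a_1,\phi^{L_2}\a_2,\phi^{L_3}\a_3).
\]
Then $\sigma P=\prod_s (u^{(\sigma(s))})^{L_s}$, so the exponent attached to slot $t$ in $\sigma P$ is $L_{\sigma^{-1}(t)}$. Now evaluate both sides of (i) at $(\a_1,\a_2,\a_3)$.
\begin{itemize}
\item The left side is $(P\bullet_\phi\tau)(\sigma^{-1}(\a_1,\a_2,\a_3))$. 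The argument triple is a permutation of the $\a$'s, and its $s$-th entry receives $\phi^{L_s}$ before $\tau$ is applied.
\item The right side is $(\sigma\tau)(\phi^{L_{\sigma^{-1}(1)}}\a_1,\dots)=\tau(\sigma^{-1}(\phi^{L_{\sigma^{-1}(1)}}\a_1,\phi^{L_{\sigma^{-1}(2)}}\a_2,\phi^{L_{\sigma^{-1}(3)}}\a_3))$.
\end{itemize}
Both sides are therefore $\tau$ evaluated on the same permuted triple, with each vector decorated by the same power of $\phi$. This is just the statement that permuting positions commutes with applying position-wise operators when the labels are transported along.

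The main obstacle is purely bookkeeping: fixing a single convention for the left action of $S_3$ and $S_N$ on tuples, namely $(\sigma v)_t=v_{\sigma^{-1}(t)}$. One must then use that same convention consistently in the definitions of $\sigma P$, $\sigma\tau$, $P_\rho$ and $\rho\phi$, so that the indices $\sigma$ versus $\sigma^{-1}$ match on both sides. I would first verify (i) on a transposition and on a 3-cycle, for the generator $x_1$, to pin down the convention. After that, the general monomial case is immediate by multiplicativity.
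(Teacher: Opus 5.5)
Your proof is correct and is essentially the paper's argument: both directly unwind the definitions. The only difference is where the reduction to generators is used. The paper checks (i) for $P=x_i$ against the generators $(12)$, $(123)$ of $S_3$, and proves (ii) by expanding $P$ in monomials and reindexing $a_{I,J,K}\mapsto a_{\rho I,\rho J,\rho K}$. You instead do (i) for an arbitrary monomial and arbitrary $\sigma$, and get (ii) from the generators $x_i,y_i,z_i$ together with the fact that $P\mapsto P_\rho$ is a ring homomorphism.

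One precision on your closing remark about conventions. The reading that makes (i) true is your explicit formula $\sigma P=\prod_s (u^{(\sigma(s))})^{L_s}$, i.e.\ $\sigma P(x,y,z)=P(\sigma^{-1}(x,y,z))$ with $(123)x_i=y_i$ as in the paper. This is precomposition by $\sigma^{-1}$, not the precomposition by $\rho$ used for $P_\rho$. If one instead defined $\sigma P$ as $P(\sigma(x,y,z))$, in parallel with $P_\rho$, then (i) would fail for the $3$-cycles.
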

\begin{proof} For the proof of (i), it is sufficient to set $P=x_i$ and check the statement on generators $\sigma=(12)$ or $\sigma=(123)$ of $S_3$. If $\sigma=(12)$, we have
$$\left(\sigma (x_i\bullet_{\phi} \tau)\right)(\a,\b,\c)=\left(x_i\bullet_{\phi} \tau\right)(\b,\a,\c)=\tau(\phi_i\b,\a,\c) $$
while
$$\left((\sigma x_i)\bullet_{\phi} (\sigma \tau)\right)(\a,\b,\c)=\left(y_i\bullet_{\phi} (\sigma \tau)\right)(\a,\b,\c)=(\sigma \tau)(\a,\phi_i\b,\c)=\tau (\phi_i\b,\a,\c).$$
If  $\sigma=(123)$, we have instead
$$\left(\sigma (x_i\bullet_{\phi} \tau)\right)(\a,\b,\c)=\left(x_i\bullet_{\phi} \tau\right)(\b,\c,\a)=\tau(\phi_i\b,\c,\a) $$
while
$$\left((\sigma x_i)\bullet_{\phi} (\sigma \tau)\right)(\a,\b,\c)=\left(y_i\bullet_{\phi} (\sigma \tau)\right)(\a,\b,\c)=(\sigma \tau)(\a,\phi_i\b,\c)=\tau (\phi_i\b,\c,\a)\,.$$
Let $P$ be as in \eqref{eq:5bis}. Since
\begin{align*}
\left(P\bullet_{\rho\phi} \tau\right)(\a,\b,\c)&=\sum_{I,J,K}a_{I,J,K}\tau((\rho\phi)^I\a,(\rho\phi)^J\b,(\rho\phi)^K\c)\\
&=\sum_{I,J,K}a_{I,J,K}\tau(\phi^{\rho^{-1}I}\a,\phi^{\rho^{-1}J}\b,\phi^{\rho^{-1}K}\c)\\
&=\sum_{I,J,K}a_{\rho I,\rho J,\rho K}\tau(\phi^{I}\a,\phi^{J}\b,\phi^{K}\c)\\
&=\left(P_{\rho}\bullet_{\phi} \tau\right)(\a,\b,\c)
\end{align*}
we obtain ii).
\end{proof}
In what follows, for any $N$-tuple $a$ and any $N'$-tuple $a'$ we will denote by $aa'$ the $N+N'$-tuple given by the juxtaposition of $a$ and $a'$. Moreover, if $N=N'$ we will denote by $a+a'$ the sum of the $N$-tuples.
Given two families $\phi\in \mathcal{F}_{\varepsilon}^N$ and $\phi'\in \mathcal{F}_{\varepsilon'}^{N'}$, consider their actions
$$\bullet_{\phi}:{\mathcal{R}_N(t,u,v)\times\Theta\rightarrow\Theta}, \qquad \bullet_{\phi'}:{\mathcal{R}_{N'}(t',u',v')\times\Theta\rightarrow\Theta}.$$
If $\phi\phi'\in \mathcal{F}_{\varepsilon\varepsilon'}^{N+N'}$ (that is, if $\phi_i\phi_j'=\phi_j'\phi_i$ for all indices $i,j$), these two actions can be combined into their \emph{composite}, that is the action 
$$\bullet_{\phi,\phi'}:\mathcal{R}_{N+N'}(t'',u'',v'')\times\Theta\rightarrow\Theta$$
where $t''=tt'$, $u''=uu'$, and $v''=vv'$, uniquely defined as follows: for all $P\in \mathcal{R}_N(t,u,v)$ and all $P'\in \mathcal{R}_{N'}(t',u',v')$, and $\tau\in \Theta$, we have
$$PP'\bullet_{\phi,\phi'}\tau=P\bullet_{\phi}\left(P'\bullet_{\phi'}\tau\right).$$
It is straightforward to check, on the monomials in the variables $t'',u'',v''$, that the action coincides with $\bullet_{\phi\phi'}$; that is, the action of $\mathcal{R}_{N+N'}(t'',u'',v'')$ on $\Theta$ associated to the juxtaposition of $\phi$ and $\phi'$.

\begin{prop}\label{prop: properties} Let $\phi,\phi'\in\mathcal{F}_{\epsilon}^N$ such that $\phi\phi'\in \mathcal{F}_{\epsilon\epsilon}^{2N} $, and let $c\in\R^N$. Then, for all $P\in{\mathcal R}_N$ and $\tau\in \Theta$,
\begin{enumerate}[i)]
\item $P\bullet_{{\rm diag}(c)\phi}\tau = Q\bullet_\phi \tau$ where
$${\rm diag}(c)\phi=(c_1\phi_1, c_2\phi_2, \dots ,c_N\phi_N)\,;$$
and
\[
Q(x,y,z)=P({\rm diag}(c)x,{\rm diag}(c)y,{\rm diag}(c)z)\,.
\]
\item 

$\displaystyle P\bullet_{\phi+\phi'}\tau=P(t+t',u+u',v+v')\bullet_{\phi,\phi'}\tau$,
where the latter action is the composite of the actions
$$\bullet_{\phi}:{\mathcal{R}_N(t,u,v)\times\Theta\rightarrow\Theta}, \qquad \mbox{ and }\qquad\bullet_{\phi'}:{\mathcal{R}_{N'}(t',u',v')\times\Theta\rightarrow\Theta}.$$

\end{enumerate}

\end{prop}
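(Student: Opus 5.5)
Both identities are $\R$-linear in $P$: each side is an $\R$-linear function of $P$, since the composite action and $\bullet_\phi$ are $\R$-linear, and substitution of variables is an $\R$-algebra homomorphism. So I will reduce to the case of a monomial $P=x^Iy^Jz^K$. Moreover, the actions are multiplicative on commuting variables, i.e.\ $(PQ)\bullet\tau=P\bullet(Q\bullet\tau)$. This follows by induction on degree from the defining relations, because the variables act in separate slots or through commuting endomorphisms in the same slot. It therefore suffices to check each statement on a single variable, say $x_i$, and then propagate through products. The $y_i$ and $z_i$ cases are identical, or can be obtained from the $x_i$ case via Proposition \ref{prop: poly properties}(i).

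For (i), take $P=x_i$. Then
$$(x_i\bullet_{{\rm diag}(c)\phi}\tau)(\a,\b,\c)=\tau(c_i\phi_i\a,\b,\c)=c_i\,\tau(\phi_i\a,\b,\c),$$
using $\R$-linearity of $\tau$ in its first slot. This equals $(c_ix_i)\bullet_\phi\tau$, which is exactly $Q\bullet_\phi\tau$ for $Q=c_ix_i$. Because substitution $x\mapsto{\rm diag}(c)x$ (and likewise in $y,z$) is a ring homomorphism, and both actions are multiplicative, the identity extends from variables to monomials, and then to all of $\mathcal R_N$ by linearity.

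For (ii), take $P=x_i$. Then
$$(x_i\bullet_{\phi+\phi'}\tau)(\a,\b,\c)=\tau(\phi_i\a+\phi'_i\a,\b,\c)=(t_i\bullet_\phi\tau+t'_i\bullet_{\phi'}\tau)(\a,\b,\c),$$
which is $(t_i+t'_i)\bullet_{\phi,\phi'}\tau$. To pass to products, I need two facts:
\begin{enumerate}[a)]
\item $\phi+\phi'$ is again a commuting family, so that $\bullet_{\phi+\phi'}$ is multiplicative. This holds because $\phi\phi'\in\mathcal F^{2N}_{\epsilon\epsilon}$ forces all the $\phi_i,\phi'_j$ to commute pairwise, and $(\phi_i+\phi_i')^*=\epsilon_i(\phi_i+\phi'_i)$.
\item The composite action $\bullet_{\phi,\phi'}$ is multiplicative on $\mathcal R_{2N}$. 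By the remark preceding the proposition, it coincides with $\bullet_{\phi\phi'}$, which is a commuting-family action and hence multiplicative.
\end{enumerate}
With these, the substitution $x_i\mapsto t_i+t'_i$, $y_i\mapsto u_i+u'_i$, $z_i\mapsto v_i+v'_i$ is a ring homomorphism $\mathcal R_N\to\mathcal R_{2N}$ intertwining the two actions on generators, hence on all of $\mathcal R_N$.

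The only step needing care is the multiplicativity of the actions. Concretely, $x^I\bullet_\phi\tau$ must be well defined independently of the order in which the factors are applied; this is guaranteed by the commutativity $\phi_i\phi_j=\phi_j\phi_i$ within the same slot, since different slots trivially commute. Once this is in place, everything else is a routine verification on generators.
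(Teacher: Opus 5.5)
Your argument is correct, but it takes a different route from the paper's.

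\textbf{The paper's route.} The paper works directly with an arbitrary $P=\sum a_{I,J,K}x^Iy^Jz^K$. For (i) it pulls the scalars out as $({\rm diag}(c)\phi)^I=c^I\phi^I$, which produces the factor $c^{I+J+K}$. For (ii) it expands $(\phi+\phi')^I=\sum_{I_1}\binom{I}{I_1}\phi^{I_1}\phi'^{I-I_1}$ by the multi-index binomial theorem; this is where commutativity of the $\phi_i,\phi'_j$ is used. It then recognizes the result, monomial by monomial, as the composite action of $P(t+t',u+u',v+v')$.

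\textbf{Your route.} You observe that $P\mapsto P\bullet_{\phi+\phi'}$ and $P\mapsto P(t+t',u+u',v+v')\bullet_{\phi,\phi'}$ are both $\R$-algebra homomorphisms $\mathcal R_N\to{\rm End}_{\R}(\Theta)$. The first is one because $\phi+\phi'$ is again a commuting tuple. The second is one because the substitution is a ring map and $\bullet_{\phi,\phi'}=\bullet_{\phi\phi'}$. Hence it suffices to compare the two maps on the $3N$ variables, where the check is just additivity or homogeneity of $\tau$ in one slot. The same reasoning handles (i).

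\textbf{What each buys.} The paper's computation is completely explicit and shows exactly which monomials and binomial coefficients arise. Your approach removes all multi-index bookkeeping. It isolates commutativity as the single input, namely what makes each $\bullet$ a genuine module action on $\Theta$. It also applies unchanged to any substitution that is a ring homomorphism. For example, it yields $P\bullet_{c^{\top}\phi}\tau=P(c^{\top}x,c^{\top}y,c^{\top}z)\bullet_{\phi}\tau$, used later in the paper, in one step rather than by iterating (i) and (ii).
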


\begin{proof} Let $P$ be as in \eqref{eq:5bis}. Since
\begin{align*}
\left(P\bullet_{{\rm diag}(c)\phi}\tau\right)(\a,\b,\c) &=\sum_{I,J,K} a_{I,J,K}\cdot\tau(({\rm diag}(c)\phi)^I\a, ({\rm diag}(c)\phi)^J\b, ({\rm diag}(c)\phi)^K\c )\\
&=\sum_{I,J,K} a_{I,J,K}\cdot\tau(c^I\phi^I\a, c^J\phi^J\b, c^K\phi^K\c )\\
&=\sum_{I,J,K} a_{I,J,K}c^{I+J+K}\cdot\tau(\phi^I\a, \phi^J\b, \phi^K\c )\\
&=\left(Q\bullet_{\phi}\tau\right)(\a,\b,\c)
\end{align*}
statement i) holds. 
Also,
$$\left(P\bullet_{\phi+\phi'}\tau\right)(\a,\b,\c)=\sum_{I,J,K} a_{I,J,K}\cdot\tau((\phi+\phi')^I\a, (\phi+\phi')^J\b, (\phi+\phi')^K\c )=$$
$$=\sum_{I,J,K,I_1,J_1, K_1}{I\choose I_1}{J\choose J_1}{K\choose K_1}   a_{I,J,K}\cdot\tau(\phi^{I_1}\phi'^{I-I_1}\a, \phi^{J_1}\phi'^{J-J_1}\b, \phi^{K_1}\phi'^{K-K_1}\c )=$$
$$=\left(P(t+t',u+u',v+v')\bullet_{\phi,\phi'}\tau\right)(\a,\b,\c).$$
\end{proof}

\subsection{Multi-homogeneous polynomials}
For a monomial $x^Iy^Jz^K\in \mathcal{R}_N(x,y,z)$, the $N$-tuple $I+J+K$ will be called its $N$-degree (or multidegree). 
For an $N$-tuple $D$, let $\mathcal{R}^D$ denote the subspace generated by the monomials of fixed multidegree $D$. (Elements of $\mathcal{R}^D$ will be called multi-homogeneous). Then, the ring $\mathcal{R}$ is equipped  with a multigrading
$$\mathcal{R}=\bigoplus_D \mathcal{R}^D.$$
Note that each subspace $\mathcal{R}^D$ is $S_3$-invariant, in the sense of Proposition \ref{prop: poly properties}. As we will see, the action of $\mathcal{R}$ from Section \ref{sec: poly action} will allow us to define $S_3$-invariant ideals $\mathcal{I}\subseteq \mathcal{R}$ that are preserved by the multigrading, i.e. such that
$$\mathcal{I}=\bigoplus_D (\mathcal{I}\cap \mathcal{R}^D).$$

\subsection{Gr\"obner Bases}\label{subsec: Grobner}
Many of the arguments in Section \ref{sec: tensorial} and Appendix \ref{app: alternate} will involve techniques from commutative algebra regarding \emph{Gr\"obner bases}. In this section, we will briefly recall some of the fundamentals regarding Gr\"obner bases and refer the reader to \cite{CLO,E,EH} for further details and background. 

Let $R=\mathbb{K}[u_1,\ldots,u_r]$ be a polynomial ring with $\mathbb{K}$ any field, and let $<$ be a \emph{monomial order} on $R$. A monomial order, which is a total order on the monic monomials of $R$ respecting monomial multiplication, allows us to distinguish a leading monomial for any polynomial $f\in R$. More specifically, given a polynomial $f$, we will denote by $\init_<(f)$ the term of $f$ which is greater than all of the other terms of $f$ with respect to $<$. As we do this for every polynomial in an ideal $I$ of $R$, we obtain the \textit{initial ideal} of $I$ defined as the monomial ideal \[\init_<(I) = \langle \init_<(f)|f\in I\rangle\,.\] 

Many important algebro-geometric properties may be studied using $\init_<(I)$ instead of $I$ itself. For example, if $\init_<(I)$ is a squarefree monomial ideal, then $I$ is a radical ideal. In fact, many properties that are true for $\init_<(I)$ (like being \emph{Gorenstein, Cohen-Macaulay, smooth, factorial}, etc.) also hold for $I$. Since proving results for monomial ideals is often far easier than for general ideals, a common approach is to consider a problem from this perspective (often called a \emph{Gr\"obner degeneration}).   Furthermore, combinatorial representations of monomial ideals, like using simplicial complexes arising from the \emph{Stanley-Reisner correspondence}, allow for combinatorial descriptions of ideal properties.

By Hilbert's basis theorem $\init_<(I)$ is finitely generated, leaving the question of how to describe a finite generating set for the ideal. This motivates the following definition.

\begin{mydef} Let $I\subset R$ be an ideal and $<$ a monomial order on $R$. Suppose that $\mathcal{G} =\{ f_1,\ldots, f_k \}$ is a generating set of $I$. We say that $\mathcal{G}$ is a Gr\"obner basis of $I$ with respect to $<$ if \[\init_<(I) = \langle \init_<(f_1), \ldots, \init_<(f_k)\rangle.\]\end{mydef}

\begin{example}
Let $I=\langle x-y,x-z\rangle \subset \mathbb{K}[x,y,z]$, and let $<$ be the lexicographic order $x>y>z$. Then $\init_<(x-y)=\init_<(x-z)=x$, but it is not true that $\init_<(I)= \langle x\rangle$. Indeed, $y-z = (x-z)-(x-y)\in I$, but $\init_<(y-z)=y$.

This means that $\{x-y,x-z\}$ is not a Gr\"obner basis of $I$ with respect to $<$. If we however extend this list by including $y-z$, then we do get a Gr\"obner basis. 
\end{example}

Given a generating set of $I$ and a monomial order $<$, we can always compute a Gr\"obner basis for $I$, using \emph{Buchberger's algorithm} for instance (computationally faster techniques exist \cite{BFS}, but Buchberger's will be the most convenient for proving results). In the previous example, an issue arose where the cancellation of initial terms of $x$ in $x-y$ and $x-z$ resulted in a polynomial $y-z$ with a smaller initial term with respect to $<$. To systematically ensure that all such cancellation is considered, one arrives at the notion of an $S$-polynomial (also referred to as an $S$-pair). Given two polynomials $f$ and $g$ of $R$, we define  \[S(f,g) := \frac{\init_<(g)}{\mathrm{GCD}(\init_<(f),\init_<(g))}f - \frac{\init_<(f)}{\mathrm{GCD}(\init_<(f),\init_<(g))}g.\]

\noindent Buchberger's algorithm can be summarized as follows:

\begin{enumerate}[i)]
\item Given $I=\langle f_1,\ldots, f_k\rangle$, compute $S(f_i,f_j)$ for all pairs of generators with $i<j$. 
\item Using multivariate polynomial division with respect to $<$ and a fixed ordering of the list of generators $\{f_1,\ldots, f_k\}$, write $S(f_i,f_j) = \sum a_tf_t +r_{i,j}$ where no $\init_<(f_i)$ divides any term of the remainder $r_{i,j}$. This is called a \emph{standard form} for $S(f_i,f_j)$.
\item The generating set $\{f_1,\ldots, f_k\}$ is a Gr\"obner basis with respect to $<$ if and only if all of the $r_{i,j}=0$. 
\item For all $i,j$ for which $r_{i,j}\neq 0$, append $r_{i,j}$ to the list of generators $\{f_1,\ldots, f_k\}$, and repeat the process with the expanded list.
\end{enumerate}

\noindent The algorithm always terminates after finitely many steps since the terms of $r_{i,j}$ are strictly smaller than the initial terms of the current generating set.

\section{Tensoriality}\label{sec:3}
Given a polynomial $P\in \mathcal R$, a commuting family $\phi$ and an $\R$-trilinear form $\tau$ on $\T M$, the polynomial action  in Section \ref{sec: poly action} allows us to define a new $\R$-trilinear form $P\bullet_{\phi}\tau$. Note that, if $\tau$ is $\Omega_M^0$-linear (or equivalently, a section of the vector bundle $(\T M\otimes \T M \otimes \T M)^*$), then so is $P\bullet_{\phi}\tau$. In this section we focus on the case when $\tau$ is not $\Omega_M^0$-linear (specifically for $\tau=\tau_C$, the Courant element) and look for conditions on the pair $(P,\phi)$ ensuring that $P\bullet_{\phi}\tau$ is $\Omega_M^0$-linear.

\begin{mydef}\label{def:7}
Let $\phi\in\mathcal{F}_{\epsilon}$, and $P\in \mathcal{R}$. The pair $(P,\phi)$ is called \textit{tensorial} if
$$P\bullet_{\phi} \tau_C\in \Gamma((\T M\otimes \T M\otimes \T M)^*).$$
Moreover, a polynomial $P\in \mathcal{R}$ is said to be \textit{universally  tensorial of signature $\epsilon$} if $(P,\phi)$ is a tensorial pair for all $\phi\in\mathcal{F}_{\epsilon}$ and all smooth manifolds $M$.
\end{mydef}

\begin{example}
Definition \ref{def:7} generalizes the notion of a tensorial pair for a single endomorphism $\phi=(\phi_1)$ introduced in \cite{AldiGrandini22}. In that case the Courant-Nijenhuis torsion $\mathcal T_{\phi_1}$ satisfies 
\[
\langle \mathcal T_{\phi_1}(\a,\b),\c\rangle=(P\bullet_\phi \tau_C)(\a,\b,\c)
\]
for all $\a,\b,\c\in \Gamma(\mathbb TM)$
with $P=(x+z)(y+z)$. If $\phi_1$ is a generalized almost complex structure, then $(P,\phi)$ is tensorial even though $P$ is not universally tensorial of signature $-1$. On the other hand, consider the {\it shifted Courant-Nijenhuis torsion} $\mathcal S_{\phi_1}$ defined by
\[
\langle \mathcal S_{\phi_1}(\a,\b),\c\rangle = \langle \mathcal T_{\phi_1}(\phi_1\a,\b)+\mathcal T_{\phi_1}(\a,\phi_1\b),\c \rangle=  (S\bullet_\phi \tau_C)(\a,\b,\c)
\]
for all $\a,\b,\c\in \Gamma(\mathbb TM)$ with $S=(x+z)(y+z)(x+y)$. Then $S$ is universally tensorial of signature $-1$. 
\end{example}

\begin{example}\label{ex:9}
Let $\phi=(\phi_1,\phi_2)$ be a commuting pair of endomorphisms of the generalized tangent bundle. Then
\begin{equation}\label{eq:5}
\mathcal K_\phi(\a,\b)=\lbra \phi_1\a, \phi_2\b\rbra-\phi_1\lbra \a,\phi_2\b\rbra - \phi_2\lbra \phi_1\a,\b\rbra +\phi_1\phi_2\lbra \a,\b\rbra
\end{equation}
 satisfies
\[
\langle \mathcal K_\phi(\a,\b),\c\rangle = (P\bullet_\phi \tau_C)(\a,\b,\c)
\]
with $P=(x_1-\varepsilon_1 z_1)(y_2-\varepsilon_2 z_2)$. While $P$ is not universally tensorial of signature $\varepsilon$, $(P,\phi)$ is a tensorial pair under the additional assumption that $\phi_1=\phi_2$ is a generalized almost complex structure, in which case $\mathcal K_\phi$ reduces to the Courant-Nijenhuis torsion of $\phi_1$. The symmetrization 
\[
\mathcal K_{(\phi_1,\phi_2)}+\mathcal K_{(\phi_2,\phi_1)}=\mathcal T_{\phi_1+\phi_2}-\mathcal T_{\phi_1}-\mathcal T_{\phi_2}
\]
coincides with the {\it Nijenhuis concomitant} studied in \cite{AntunesNunes13} and with the bivariate Courant-Nijenhuis torsion discussed in \cite{AldiGrandini22}. For this reason we propose the terminology \textit{semiconcomitant} for $\mathcal K_\phi$. 
\end{example}

\noindent In light of Definition \ref{def:7}, we can consider two natural ideals of $\mathcal{R}$:

\begin{enumerate}[i)]

\item ${\mathcal I}(\phi)$, consisting of all $P\in \mathcal{R}$ such that $(P,\phi)$ is a tensorial pair;
\item ${\mathcal I}_{\epsilon}$, consisting of the
universally tensorial polynomials of signature $\epsilon$.
\end{enumerate}
In the $N=1$ case, the ideal $\mathcal I(\varphi)$ was introduced in \cite{AldiGrandini22}. We will now consider the general case.

\begin{prop}\label{prop:tens} Let $P$ be as in \eqref{eq:5bis} and let  $\phi\in\mathcal{F}_{\epsilon}$. The pair $(P,\phi)$ is tensorial 
if and only if the following conditions are satisfied:
\begin{eqnarray}\label{eq:tens}&&\sum_{I,J,T}\epsilon^Ja_{I,J,T-J}\pi\left(\phi^I(\a)\right)(f)\phi^{T}=0\nonumber\\ \\
&&\sum_{I,J,T}\left(\epsilon^{T-J}a_{T-J,I,J}\pi\left(\phi^I(\b)\right)(f)\phi^{T}(\c)\right)=  \sum_{I,J,T}\left(\epsilon^J    a_{J,T-J,I}\pi\left(\phi^I(\c)\right)(f)\phi^{T}(\b)\right)\,.\nonumber\end{eqnarray}
\end{prop}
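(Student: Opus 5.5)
The plan is to check $\Omega_M^0$-linearity of $P\bullet_\phi\tau_C$ in each of its three arguments separately. By definition,
\[
(P\bullet_\phi\tau_C)(\a,\b,\c)=\sum_{I,J,K}a_{I,J,K}\langle\lbra\phi^I\a,\phi^J\b\rbra,\phi^K\c\rangle .
\]
Each $\phi_i$ is $\Omega_M^0$-linear and $\langle\cdot,\cdot\rangle$ is $\Omega_M^0$-bilinear, so linearity in $\c$ is automatic. Tensoriality is therefore equivalent to the vanishing, for all $f\in\Omega_M^0$ and all $\a,\b,\c$, of the two defects
\[
(P\bullet_\phi\tau_C)(\a,f\b,\c)-f(P\bullet_\phi\tau_C)(\a,\b,\c)
\]
and
\[
(P\bullet_\phi\tau_C)(f\a,\b,\c)-f(P\bullet_\phi\tau_C)(\a,\b,\c).
\]
I would compute each defect using the Leibniz identities \eqref{eq:Leibniz}. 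I would then move every endomorphism onto a single slot via $\phi_i^*=\epsilon_i\phi_i$ and commutativity, which gives $\langle\phi^I\mathbf u,\mathbf v\rangle=\epsilon^I\langle\mathbf u,\phi^I\mathbf v\rangle$. Finally I would use nondegeneracy of the tautological inner product to strip off the free argument.

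For the second slot, the first Leibniz identity gives the defect
\[
\sum a_{I,J,K}\,\pi(\phi^I\a)(f)\,\langle\phi^J\b,\phi^K\c\rangle=\Big\langle\b,\sum a_{I,J,K}\,\epsilon^J\pi(\phi^I\a)(f)\,\phi^{J+K}\c\Big\rangle .
\]
Setting $T=J+K$, so that $K=T-J$, and using that $\b,\c$ are arbitrary and the pairing is nondegenerate, this vanishes for all $\b,\c$ exactly when the endomorphism $\sum_{I,J,T}\epsilon^Ja_{I,J,T-J}\pi(\phi^I\a)(f)\phi^T$ is zero for all $\a$ and $f$. This is the first condition in \eqref{eq:tens}.

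For the first slot, the second Leibniz identity produces two correction terms:
\[
-\pi(\phi^J\b)(f)\,\langle\phi^I\a,\phi^K\c\rangle+2\langle\phi^I\a,\phi^J\b\rangle\,\langle df,\phi^K\c\rangle .
\]
From the definition of the pairing, $2\langle df,\phi^K\c\rangle=\pi(\phi^K\c)(f)$. Moving $\phi^I$ across the pairing onto the other argument, the defect becomes
\[
\Big\langle\a,\sum a_{I,J,K}\epsilon^I\big(\pi(\phi^K\c)(f)\phi^{I+J}\b-\pi(\phi^J\b)(f)\phi^{I+K}\c\big)\Big\rangle .
\]
Since $\a$ is arbitrary, this vanishes exactly when the vector inside is zero. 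Relabeling the indices finishes the comparison. In the $\c$-term, the index $J$ carried by $\pi(\phi^J\b)$ is renamed $I$, the exponent $I+K$ becomes $T$, and the old $I$ becomes $T-J$ (after renaming $K\to J$). In the $\b$-term, the analogous renaming is done. These should turn the two sums into the two sides of the second identity in \eqref{eq:tens}, with coefficients $\epsilon^{T-J}a_{T-J,I,J}$ and $\epsilon^Ja_{J,T-J,I}$. The converse direction is immediate, because each step is an equivalence.

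The only real obstacle I expect is this final bookkeeping. One must track the signs $\epsilon^I$ correctly when the adjoint is moved, using $(\phi^I)^*=\epsilon^I\phi^I$, which needs commutativity. One must also choose the reindexing so that the coefficients land exactly in the form stated. I would verify it on a single monomial $x^Iy^Jz^K$ first and then extend by linearity.
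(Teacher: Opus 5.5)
Your proposal is correct and follows the paper's proof: linearity in $\c$ is automatic, and the defects in the second and first slots are computed via the Leibniz identities \eqref{eq:Leibniz}, with endomorphisms moved across the pairing using $\phi_i^*=\epsilon_i\phi_i$ and the free argument removed by nondegeneracy. You actually carry out the first-slot case, which the paper dismisses as ``similar'', and your reindexing there (using $2\langle df,\phi^K\c\rangle=\pi(\phi^K\c)(f)$) does yield exactly the two sides of the second stated condition, with the $\phi^T\c$-term on the left and the $\phi^T\b$-term on the right.
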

\begin{proof}
We adapt the proof of Proposition 8 in \cite{AldiDaSilvaGrandini24}. Given $f\in\Omega_M^0$,
$$(P\bullet_{\phi}\tau_C)(\a,\b,f\c)-f(P\bullet_{\phi}\tau_C)(\a,\b,\c)$$
is equal to
\[
\sum_{I,J,K}a_{I,J,K}\langle\lbra\phi^I\a,\phi^J\b\rbra,\phi^K(f\c)\rangle-f\sum_{I,J,K}a_{I,J,K}\langle\lbra\phi^I\a,\phi^J\b\rbra,\phi^K\c\rangle
\]
which vanishes by the $\Omega_M^0$-linearity of $\phi^K$ and of the tautological inner product. Hence, $P\bullet_{\phi}\tau_C$ is always $\Omega_M^0$-linear in the third variable. Moreover,
\begin{align*}
(P\bullet_{\phi}\tau_C)(\a,f\b,\c)-f(P\bullet_{\phi}\tau_C)(\a,\b,\c)&=\sum_{I,J,K}a_{I,J,K}\pi(\phi^I\a)(f)\langle\phi^J\b,\phi^K\c\rangle\\
&=\sum_{I,J,K}\epsilon^Ja_{I,J,K}\pi(\phi^I\a)(f)\langle\b,\phi^{J+K}\c\rangle\,.
\end{align*}
Using the substitution $T=J+K$, the above sum becomes
$$=\sum_{I,J,T}\epsilon^{J}a_{I,J,T-J}\pi(\phi^I\a)(f)\langle\b,\phi^{T}\c\rangle$$
which is equal to $0$ for all $\a,\b,\c\in \Gamma(\T M)$ if and only if
$$\sum_{I,J,T}\epsilon^{J}a_{I,J,T-J}\pi(\phi^I\a)(f)\phi^{T}=0$$
for all $\a\in \Gamma(\T M)$. The calculation for the $\Omega_M^0$-linearity in the first variable is similar.

\end{proof}

\begin{rem}
The above construction can be extended to polynomials with nonconstant coefficients; that is, coefficients which are elements of the $\Omega_M^0$-module
$$\mathcal{R}(M):=\Omega_M^0\otimes_{\R}\mathcal{R}.$$
In fact, the $\R$-linear action of $\mathcal{R}$ on $\Theta$ extends uniquely to an action of the module $\mathcal{R}(M)$ on $\Theta$ such that, for all $f\in \Omega_M^0$, $P\in \mathcal{R}$ and $\tau\in\Theta$, 
$$(f\otimes P)\bullet\tau=f\cdot(P\bullet\tau).$$
The notion of tensorial couple can be extended verbatim to polynomials in $\mathcal{R}(M)$. However, the notion of \lq\lq universal tensoriality\rq\rq\ cannot be transported to this extended context, but can be replaced by the following condition of \textit{pointwise universal tensoriality}:
$$P_p\in \mathcal{I}_{\epsilon}\mbox{ for all }p\in M\,.$$
It is straightforward to see that pointwise universally tensorial polynomials are simply the elements of 
$$\Omega_M^0\otimes{\mathcal{I}_{\epsilon}}.$$
\end{rem}

\begin{theorem}\label{theo:unitens} Let $P=\sum_{I,J,K}a_{IJK}x^Iy^Jz^K \in \mathcal{R}$. The following conditions are equivalent:
\begin{enumerate}[i)]
    \item  $P$ is universally tensorial;
    \item  the following system of equations is satisfied:
\begin{equation}\label{eq:unitens}\sum_{J}\epsilon^Ja_{I,J,T-J}=0,\quad \sum_{J}\epsilon^Ja_{J,T-J,I}=0, \quad \sum_{J}\epsilon^Ja_{T-J,I,J}=0\end{equation}
for all $I,T$;
\item the polynomial $P$ vanishes on the following subvariety of $\C^{3N}$:
$$V_{\epsilon}:=\left\{(x,y,z):y_i=\epsilon_iz_i\ \forall i\right\}\cup \left\{(x,y,z):z_i=\epsilon_ix_i\ \forall i\right\}\cup\left\{(x,y,z):x_i=\epsilon_iy_i\ \forall i\right\}\,.$$
\end{enumerate}
\end{theorem}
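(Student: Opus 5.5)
The proof will be a chain ii)$\Leftrightarrow$iii) and i)$\Leftrightarrow$ii). The first equivalence is purely algebraic; the second rests on Proposition \ref{prop:tens}.

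\emph{The equivalence ii)$\Leftrightarrow$iii).} I substitute $y_i=\epsilon_iz_i$ into $P$. Using $\epsilon_i^2=1$, this gives
\[
P(x,\epsilon z,z)=\sum_{I,J,K}a_{I,J,K}\,\epsilon^J x^I z^{J+K}=\sum_{I,T}\Big(\sum_J\epsilon^Ja_{I,J,T-J}\Big)x^Iz^T .
\]
So $P$ vanishes on the first component of $V_\epsilon$ if and only if the first family of equations in \eqref{eq:unitens} holds. The component $z_i=\epsilon_ix_i$ yields the coefficient $\sum_J\epsilon^J a_{J,I,T-J}$, after reindexing with $J=K$ and $T=I+K$ in the appropriate slot. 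The component $x_i=\epsilon_iy_i$ works the same way. It remains to check that these match the remaining two equations in \eqref{eq:unitens} as written. If they do not match literally, I will match them using the $S_3$-symmetry of Proposition \ref{prop: poly properties}, since each component is the image of the first under the cyclic permutation. A polynomial over $\C$ vanishes on a linear subspace exactly when its restriction is the zero polynomial, so this step is immediate.

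\emph{The implication ii)$\Rightarrow$i).} Assume \eqref{eq:unitens}. Then both sums in \eqref{eq:tens} vanish coefficientwise, for every $\phi$ and $f$, because each is a combination indexed by $(I,T)$ whose coefficients are exactly the sums in \eqref{eq:unitens}. By Proposition \ref{prop:tens}, $P$ is universally tensorial. The second condition of \eqref{eq:tens} involves two different index patterns, and both must be matched with the corresponding equations. One subtle point is the sign: the proof above computed linearity in the second slot, but the first-slot computation via \eqref{eq:Leibniz} contributes the term $2\langle\a,\b\rangle df$ together with $-\pi(\b)(f)\a$. I will redo that computation carefully, using the Courant identity \eqref{eq:Courant}, to see that it produces exactly the two remaining sums.

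\emph{The implication i)$\Rightarrow$ii).} This is the main obstacle: I must produce enough test families $\phi$ to force each coefficient sum to vanish. I plan to take $M=\R^n$ with $n$ large, $\T M$ trivialized, and $\phi$ constant diagonal in a basis adapted to the signature. For $\epsilon_i=1$, I take $\phi_i$ acting by $\lambda_i$ on $TM$ and by $\lambda_i$ on $T^*M$ via the dual basis, which makes it symmetric. For $\epsilon_i=-1$, I take $\phi_i$ acting by $\lambda_i$ on $TM$ and by $-\lambda_i$ on $T^*M$, which makes it skew. Different coordinate directions get different parameters, so that
\[
\pi(\phi^I \partial_k)=\lambda^{(k)I}\partial_k, \qquad \phi^T\partial_l=\mu^{(l)T}\partial_l .
\]
Choosing $f=x_k$, the first condition of \eqref{eq:tens} becomes the statement that the polynomial $\sum_{I,T}c_{I,T}\,\lambda^I\mu^T$ vanishes, with $c_{I,T}$ the coefficient sums. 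Here $\lambda$ and $\mu$ can be chosen independently, because they refer to distinct directions $k\neq l$. Since all such diagonal choices commute, they are admissible, and letting the parameters vary over an infinite set forces every $c_{I,T}=0$. The same test families applied to the second condition give the other two families of equations. For those I use $\b$ and $\c$ along distinct directions, so that the two sides of the identity become linearly independent and each must vanish separately.
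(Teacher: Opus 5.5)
Your proposal is correct and follows essentially the paper's route: ii)$\Rightarrow$i) by Proposition \ref{prop:tens}, and the converse by diagonal test families with eigenvector fields $\partial_k,\partial_l$ ($k\neq l$) and $f=x_k$, where your expression $\sum_{I,T}c_{I,T}\lambda^I\mu^T$ is exactly the paper's $P(\lambda,{\rm diag}(\epsilon)\mu,\mu)$. The one loose end you flag, matching the other two components with \eqref{eq:unitens} as written, is settled by the reindexing $J\mapsto T-J$ together with $\epsilon^{T-J}=\epsilon^T\epsilon^J$, not by the $S_3$-action.
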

\begin{proof} We adapt the proof of Theorem 9 in \cite{AldiDaSilvaGrandini24}. By substituting the condition of iii) into the expression for $P$ we recover ii). Moreover, ii) coupled with Proposition \ref{prop:tens} implies i). We are left to show that if $P$ is universally tensorial, then it vanishes when restricted to $V_\varepsilon$. It suffices to look at a particular manifold $M$ endowed with a commuting family $\phi$ of endomorphisms of the generalized tangent bundle of signature $\varepsilon$ such that each $\phi_i$ maps vectors to vectors and with linearly independent vector fields $X,Y$ such that $\phi_i(X)=\lambda_i X$ and $\phi_i(Y)=\mu_i Y$ for all $i\in \{1,\ldots,N\}$. Then, by Proposition \ref{prop:tens}, $P(\lambda,{\rm diag}(\varepsilon) \mu, \mu)X(f)Y=0$ for all $f$ and thus $P(\lambda,{\rm diag}(\varepsilon) \mu, \mu)=0$. The remaining cases are proved in a similar way.

\end{proof}

It follows from equations \eqref{eq:unitens} that the ideal ${\mathcal I}_{\epsilon}$ is $S_3$-invariant. Also, it is preserved by the multi-grading, i.e.,\
$${\mathcal I}_{\epsilon}=\bigoplus_D {\mathcal I}^D_{\epsilon}$$
where ${\mathcal I}^D_{\epsilon}={\mathcal I}_{\epsilon}\cap \mathcal{R}^D$. It is also clear that the subspaces ${\mathcal I}^D_{\epsilon}$ are $S_3$-invariant as well.

\begin{rem}\label{rem: alternating} From Theorem \ref{theo:unitens} follows that if  $P\in\mathcal{I}_{\varepsilon}$  and  additionally $P$ is invariant under the permutations of the letters $x,y,z$ (that is, $\sigma P=P$ for all $\sigma\in S_3$), then all tensors resulting from it are necessarily alternating. In fact, for all $\a,\b,\c\in \Gamma(\T M)$, define the $\R$-trilinear function $\theta(\a,\b,\c)=\pi(\a)\langle\b,\c\rangle$.
If $P$ is as in \eqref{eq:5bis} and $\phi\in\mathcal{F}_{\varepsilon}$, then we have
\begin{equation}
\left(P\bullet_\phi\theta\right)(\a,\b,\c)=\sum_{I,J,T}\epsilon^Ja_{I,J,T-J}\pi\left(\phi^I(\a)\right)\langle\b, \phi^T(\c)\rangle\,.
\end{equation}
On the other hand, since $P$ is universally tensorial we can invoke Theorem \ref{theo:unitens} and obtain
\begin{equation}\left(P\bullet_\phi\theta\right)(\a,\b,\c)=\sum_{I,T}\left(\sum_J\epsilon^Ja_{I,J,T-J}\right)\pi\left(\phi^I(\a)\right)\langle\b, \phi^T(\c)\rangle=0.\nonumber\end{equation}
Now, recall that
\[
\left(P\bullet_\phi\tau_C\right) (\a,\b,\c) = \sum_{I,J,K}a_{I,J,K}\langle\lbra\phi^I\a, \phi^J\b\rbra,\phi^K\c\rangle\,,
\]
so that from the identities (\ref{eq:Courant}) and the invariance of $P$ follows that the quantity
\[
\left(P\bullet_\phi\tau_C\right) (\a,\b,\c)+\left(P\bullet_\phi\tau_C\right)(\a,\c,\b)
\]
is equal to
\[ \sum_{I,J,K}a_{I,J,K}\left(\langle\lbra\phi^I\a, \phi^J\b\rbra,\phi^K\c\rangle+\langle\lbra\phi^I\a, \phi^K\c\rbra,\phi^J\b\rangle\right)=\left(P\bullet_\phi \theta\right) (\a,\b,\c)\,.
\]
Similarly, 
\[
\left(P\bullet_\phi\tau_C \right)(\a,\b,\c)+\left(P\bullet_\phi\tau_C\right) (\b,\a,\c)=\left(P\bullet_\phi \theta\right) (\c,\a,\b)\,,
\]
which is sufficient to conclude that the tensor $P\bullet_{\phi}\tau_C$ is alternating. \hfill \qed
\end{rem}

\begin{example}\label{ex:16}
Given a commuting family $\phi\in {\mathcal F}_{\epsilon}$ and $i,j,k\in \{1,\ldots,N\}$, consider the polynomial 
\begin{equation}\label{eq:11}
T^{ijk}_\varepsilon=(x_i-\varepsilon_iy_i)(y_j-\varepsilon_jz_j)(z_k-\varepsilon_kx_k)\,.
\end{equation}
By Theorem \ref{theo:unitens}, $T^{ijk}_\varepsilon$ is in the universally tensorial ideal $\mathcal I_\varepsilon$. Moreover,
\[
\widetilde{{\mathcal T}}^{ijk}_{\phi}(\a,\b,\c):=(T^{ijk}_{\epsilon}\bullet_{\phi}\tau_C)(\a,\b,\c)=\langle {\mathcal T}^{ijk}_{\phi}(\a,\b),\c\rangle 
\]
for all sections $\a,\b,\c$ of the generalized tangent bundle, where
\begin{align*}
{\mathcal T}^{ijk}_{\phi}(\a,\b)&=\varepsilon_k\phi_k\lbra\phi_i\a,\phi_j\b\rbra-\varepsilon_k\epsilon_i\phi_k\lbra\a,\phi_i\phi_j\b\rbra+\phi_j\phi_k\lbra\phi_i\a,\b\rbra+\varepsilon_k\epsilon_i\phi_j\phi_k\lbra\a,\phi_i\b\rbra+\\
& -\varepsilon_k\lbra\phi_i\phi_k\a,\phi_j\b\rbra+\varepsilon_k\epsilon_i\lbra\phi_k\a,\phi_i\phi_j\b\rbra+\varepsilon_k\phi_j\lbra\phi_i\phi_k\a,\b\rbra-\varepsilon_k\epsilon_i\phi_j\lbra\phi_k\a,\phi_i\b\rbra\,.
\end{align*}
Using the semiconcomitant introduced in Example \ref{ex:9}, we can equivalently write
\begin{equation}\label{eq:10}
{\mathcal T}^{ijk}_{\phi}(\a,\b)=\varepsilon_i\varepsilon_k\mathcal K_{(\phi_k,\phi_j)}(\a,\phi_i\b)-\varepsilon_k \mathcal K_{(\phi_k,\phi_j)}(\phi_i\a,\b)\,.
\end{equation}
If $N=1$, we obtain only one possible expression, namely, 
\[
\mathcal T^{111}_\phi(\a,\b)=\mathcal T_{\phi_1}(\a,\phi_1\b)-\varepsilon_1\mathcal T_{\phi_1}(\phi_1\a,\b)
\]
for all $\a,\b\in \Gamma(\mathbb TM)$. If $\varepsilon_1=-1$, we recover the shifted Courant-Nijenhuis torsion $\mathcal S_{\phi_1}$.

If $N=2$, then we have eight possible ``torsions'' $\mathcal T_\phi^{lll}$, $\mathcal T_\phi^{mll}$, $\mathcal T_\phi^{lml}$, $\mathcal T_\phi^{llm}$, for all $l,m\in \{1,2\}$, $l\neq m$. Moreover,  $\mathcal T_\phi^{lll}=-\mathcal S_{\phi_l}$ and   
\begin{equation}
\mathcal T_\phi^{mll}(\a,\b)=\varepsilon_l \varepsilon_m \mathcal T_{\phi_l}(\a,\phi_m\b) - \varepsilon_l \mathcal T_{\phi_l}(\phi_m\a,\b)
\end{equation}
for all $\a,\b\in \Gamma(\T M)$. In the particular case in which $\phi_1$ and $\phi_2$ are both generalized almost complex structures, 
\[
\mathcal T_\phi^{lml}(\a,\b)=\mathcal T_{\phi_l}(\a,\phi_m\b)-\phi_m\mathcal T_{\phi_l}(\a,\b)=\mathcal T_\phi^{llm}(\a,\b)\,,
\]
from which we conclude that $\phi_1$ and $\phi_2$ are both integrable (i.e.\ generalized complex structures) if and only if all the eight torsions $\mathcal T_\phi^{ijk}$ vanish identically.

\end{example}
\begin{example}\label{ex:18}
Now, let $i,j\in \{1,\ldots,N\}$ such that $\varepsilon_i=\varepsilon_j=1$ and consider the polynomial 
\begin{equation}\label{eq:14}
P^{ij}:=(z_i-x_i)(x_j-y_j)-(z_j-x_j)(x_i-y_i).
\end{equation}
By Theorem \ref{theo:unitens}, this polynomial is in the universally tensorial ideal $\mathcal{I}_{\varepsilon}$.
Moreover,
$$\widetilde{{\mathcal P}}^{ij}_{\phi}(\a,\b,\c):=\left(P^{ij}\bullet_{\phi}\tau_C\right)(\a,\b,\c)=\langle\mathcal{P}_{\phi}^{ij}(\a,\b),\c\rangle,$$
where
\begin{equation}
\mathcal{P}_{\phi}^{ij}(\a,\b)=\lbra\phi_i\a,\phi_j\b\rbra-\lbra\phi_j\a,\phi_i\b\rbra-\phi_j\lbra\phi_i\a,\b\rbra+\phi_i\lbra\phi_j\a,\b\rbra+\phi_j\lbra\a,\phi_i\b\rbra-\phi_i\lbra\a,\phi_j\b\rbra\,.
\end{equation}
Equivalently, $\mathcal P_\phi^{ij}=\mathcal K_{(\phi_i,\phi_j)}-\mathcal K_{(\phi_j,\phi_i)}$. In particular, it is clear that $\mathcal P_\phi^{ii}=0$ for every $i\in \{1,\ldots,N\}$.

\end{example}
In the next section we will prove that the polynomials $T^{ijk}_{\varepsilon}$ and $P^{ij}$ considered in Examples \ref{ex:16} and \ref{ex:18} indeed generate the ideal $\mathcal{I}_{\varepsilon}$. This means that every trilinear tensor associated to a universally tensorial polynomial is of the form
$$\sum_{i,j,k}A_{ijk}\bullet_{\phi}\widetilde{\mathcal{T}}^{ijk}_{\phi}+\sum_{\varepsilon_i=\varepsilon_j=1}B_{ij}\bullet_{\phi}\widetilde{\mathcal{P}}^{ij}_{\phi}$$
for some $A_{ijk},B_{ij}\in \mathcal{R}$.

\section{Tensorial Ideals}\label{sec: tensorial}

In the last section, we showed that a polynomial $P$ is universally tensorial if and only if it vanishes on the set
$V_{\epsilon}$, by Theorem \ref{theo:unitens}. If we define the following ideals in $\mathcal{R}$, 
\begin{align*}
I_{\epsilon}^x&:=\langle y_i-\epsilon_i z_i:\ i=1\dots N\rangle,\nonumber\\
I_{\epsilon}^y&:=\langle  z_i-\epsilon_i x_i:\ i=1\dots N\rangle,\nonumber\\
I_{\epsilon}^z&:=\langle x_i-\epsilon_i y_i:\ i=1\dots N\rangle,\nonumber\end{align*}

\noindent then by the Real Nullstellensatz \cite[Theorem A.3]{Powers22} we have 
\[
\mathcal{I}_{\epsilon}=I(V_{\epsilon})=\sqrt[\mathbb R]{I_{\epsilon}^x\cap I_{\epsilon}^y\cap I_{\epsilon}^z}\,,
\]
where $\sqrt[\mathbb R]{J}$ denotes the real radical of an ideal $J$. It is easy to see that the $S_3$-action on the variables permutes these ideals and that each ideal is a prime ideal (indeed, $\mathcal{R}/I_\epsilon^x$ is isomorphic to a polynomial ring in fewer variables). First we will show that $I_{\epsilon}^x\cap I_{\epsilon}^y\cap I_{\epsilon}^z$ is a real radical ideal.

\begin{lem}
Let $I_{\epsilon}^x, I_{\epsilon}^y$ and $I_{\epsilon}^z$ be defined as above for some fixed integer $N>0$. Then
$\sqrt[\mathbb R]{I_{\epsilon}^x\cap I_{\epsilon}^y\cap I_{\epsilon}^z} =I_{\epsilon}^x\cap I_{\epsilon}^y\cap I_{\epsilon}^z$.   
\end{lem}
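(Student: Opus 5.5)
The plan is to show that the intersection $J:=I_{\epsilon}^x\cap I_{\epsilon}^y\cap I_{\epsilon}^z$ equals the full vanishing ideal of its real zero set. Since the real radical of $J$ is the largest ideal with the same real zero set, the lemma then follows. Concretely, it suffices to show that every polynomial vanishing on $V_\epsilon\cap\R^{3N}$ lies in $J$.

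Each $I^x_\epsilon,I^y_\epsilon,I^z_\epsilon$ is generated by $N$ linear forms. After a linear change of variables, each one is therefore the ideal of a linear subspace $L_x,L_y,L_z\subset\R^{3N}$; for example, $L_x=\{y_i=\epsilon_i z_i\ \forall i\}$. For such ideals the claim is standard:
\begin{itemize}
\item If $f$ vanishes on the real points of $L_x$, then $f$ restricted to $L_x$ is a real polynomial in the coordinates $x,z$ vanishing on all of $\R^{2N}$. Hence it is the zero polynomial, and $f\in I^x_\epsilon$.
\item Equivalently, $\mathcal R/I^x_\epsilon\cong\R[x,z]$ is a polynomial ring and a nonzero real polynomial cannot vanish identically on $\R^{2N}$.
\end{itemize}
Thus each $I^\bullet_\epsilon$ is real radical: $I(L_\bullet(\R))=I^\bullet_\epsilon$.

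Next I would use the fact that the vanishing ideal of a finite union of sets is the intersection of their vanishing ideals. Writing $V_\epsilon(\R)=L_x(\R)\cup L_y(\R)\cup L_z(\R)$, this gives
\[
I(V_\epsilon(\R))=I(L_x(\R))\cap I(L_y(\R))\cap I(L_z(\R))=I_{\epsilon}^x\cap I_{\epsilon}^y\cap I_{\epsilon}^z=J.
\]

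To finish, I would invoke the Real Nullstellensatz \cite[Theorem A.3]{Powers22} to obtain $\sqrt[\mathbb R]{J}=I(V_{\R}(J))$, where $V_{\R}(J)$ is the real zero set of $J$. The real zero set of an intersection of ideals is the union of their real zero sets, so $V_\R(J)=V_\epsilon(\R)$, and therefore $\sqrt[\mathbb R]{J}=J$.

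Alternatively, one can avoid the Nullstellensatz and argue directly from the definition. A finite intersection of real radical ideals is real radical: if $\sum f_k^2\in J$, then $\sum f_k^2$ lies in each $I^\bullet_\epsilon$, so each $f_k$ lies in each $I^\bullet_\epsilon$, and hence in $J$.

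I do not expect a serious obstacle here. The only point needing care is the verification that a prime ideal generated by linear forms is real radical. This reduces to the fact that $\R[x,z]$ is formally real: a sum of squares of real polynomials vanishes only if each polynomial is zero, which can be checked by comparing leading terms under any monomial order.
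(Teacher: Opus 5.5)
Your proof is correct, and it reaches the conclusion by a somewhat different route than the paper.

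The paper starts from the description of $\sqrt[\mathbb R]{J}$ as the intersection of all real prime ideals containing $J$ (citing Scheiderer). It then only needs to observe that $I^x_\epsilon$, $I^y_\epsilon$, $I^z_\epsilon$ are themselves real primes, because each quotient is a polynomial ring over $\R$ with orderable fraction field. These three primes therefore appear in that intersection, which forces $\sqrt[\mathbb R]{J}\subseteq I^x_\epsilon\cap I^y_\epsilon\cap I^z_\epsilon$.

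Your main route instead computes $I(V_\epsilon(\R))=J$ directly from the linear structure and then compares this with the real radical. You in fact only use the trivial inclusion $\sqrt[\mathbb R]{J}\subseteq I(V_{\R}(J))$: if $f^{2m}+\sum g_k^2\in J$, then $f$ vanishes at every real zero of $J$. So the chain $J\subseteq\sqrt[\mathbb R]{J}\subseteq I(V_\R(J))=J$ closes without the deep half of the Real Nullstellensatz, and you could drop that citation.

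Your alternative route shares the paper's key input, namely the formal reality of the quotients $\R[x,z]$. With the standard definition $\sqrt[\mathbb R]{J}=\{f: f^{2m}+\sum g_k^2\in J\}$, it replaces the structure theorem on real primes (and the identification of real fields with orderable ones) by the one-line observation that a finite intersection of real ideals is real.

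Overall, your version is more elementary. The main route also yields $I(V_\epsilon(\R))=J$ outright, which is the form in which the lemma is actually used to identify $\mathcal I_\epsilon$ with $J$. The paper's version is shorter once the cited machinery is granted.
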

\begin{proof}
By \cite[Definition 3.2.13]{Sch}, since 
\[I_{\epsilon}^x\cap I_{\epsilon}^y\cap I_{\epsilon}^z\subseteq\sqrt[\mathbb{R}]{I_{\epsilon}^x\cap I_{\epsilon}^y\cap I_{\epsilon}^z} = \displaystyle\bigcap_{\substack{{I_{\epsilon}^x\cap I_{\epsilon}^y\cap I_{\epsilon}^z\subseteq J} \\ \text{$J$ is prime and real}}} \hspace{-6mm} J,\]
it suffices to show that $I_{\epsilon}^x$, $I_{\epsilon}^y$ and $I_{\epsilon}^z$ are real ideals (we showed that they are prime above). By \cite[Definition 3.2.13]{Sch}, a prime ideal $J\subset R$ is a real ideal if the field of fractions of $R/J$ is a real field (i.e. the residue field of $J$ can be ordered). The quotient $\mathcal{R}/I_{\epsilon}^x$ is isomorphic to $\R[x_1,\ldots, x_N, z_1,\ldots, z_N]$ whose field of fractions is the field of rational functions (which is known to be an ordered field). A similar statement holds for $I_{\epsilon}^y$ and $I_{\epsilon}^z$. Therefore, $\sqrt[\mathbb{R}]{I_{\epsilon}^x\cap I_{\epsilon}^y\cap I_{\epsilon}^z}$ is contained in $I_{\epsilon}^x,I_{\epsilon}^y$ and $I_{\epsilon}^z$, proving the result.

\end{proof}

We have reduced finding a generating set of $\mathcal{I}_{\epsilon}$ to finding a generating set for an intersection of ideals. In Gr\"obner theory, a common way to compute the intersection of two ideals $I,J\subset R=\mathbb{K}[u_1,\ldots,u_r]$ is to compute a Gr\"obner basis $\mathcal{G}$ for the ideal $T=tI + (1-t)J \subset R[t]$ with respect to an elimination order on $R[t]$. Since $I\cap J = T\cap R$, the elements in $\mathcal{G}$ that don't involve the variable $t$ generate $I\cap J$. See \cite[Section 15.10.4, Exercise 15.43]{E} for further details.

Using this method to compute $I_{\epsilon}^x\cap I_{\epsilon}^y\cap I_{\epsilon}^z$ directly for general $N$ is cumbersome. In particular, the process of finding a Gr\"obner basis for $J_{\epsilon}=tI_{\epsilon}^x + (1-t)I_{\epsilon}^y\cap I_{\epsilon}^z $ involves also computing a Gr\"obner basis for $I_{\epsilon}^x\cap I_{\epsilon}^y\cap I_{\epsilon}^z$, which is stronger than simply finding a generating set for it. Indeed, finding a conjectural Gr\"obner basis for $J_{\epsilon}$ or even $I_{\epsilon}^x\cap I_{\epsilon}^y\cap I_{\epsilon}^z$ was not immediately straightforward and the $S$-polynomial computations quickly became challenging. Instead, we will take advantage of the inductive definition of $I_{\epsilon}^x$, $I_{\epsilon}^y$ and $I_{\epsilon}^z$ to reduce the computation of a generating set to the $N=6$ case, and then check this case for all signatures by computer. 

We will proceed by introducing and subsequently applying some commutative algebra techniques that will hopefully prove useful in other geometric contexts. In Subsection \ref{subsec: Knutson}, we start  by describing the intersection of any two of the ideals $I_{\epsilon}^x, I_{\epsilon}^y$ and $I_{\epsilon}^z$ using a lesser known technique called Frobenius splitting \cite{Knutson}. We will then use Gr\"obner theory and the reduction described above to find a generating set for $\mathcal{I}_\epsilon$ in Subsection \ref{subsec: main}. We also provide a Gr\"obner basis for $I_{\epsilon}^x\cap I_{\epsilon}^y\cap I_{\epsilon}^z$ in the special case where $\epsilon=(1,1,\ldots,1)$ in Appendix \ref{app: alternate}, introducing tools that may be useful for subsequent research on this topic.

\subsection{Knutson ideals}\label{subsec: Knutson}

In this section we will prove that $I_\varepsilon^xI_\varepsilon^z = I_\varepsilon^x\cap I_\varepsilon^z$, which simplifies computations later. As highlighted in the last section, there are multiple general approaches to computing an intersection of two ideals, but because of the very specific linear form of the generators of each ideal, we will use the method called Frobenius splitting whose technical details, such as the issue of passing from positive characteristic to characteristic zero, can be conveniently described using Knutson ideals \cite{Seccia}.

\begin{mydef} \label{def: Knutson} Let $f\in R=\mathbb{K}[u_1,\ldots,u_r]$ be a polynomial such that $\init_<(f)$ is a squarefree monomial for some term order $<$ on $R$. Recall that given ideals $I,J$ in a commutative ring $A$, $(I:J)$ denotes the ideal of all $a\in A$ such  that $aJ\subseteq I$. Define $\mathcal{C}_f$ to be the smallest set of ideals satisfying the following conditions:

\begin{enumerate}
    \item $\langle f\rangle \in \mathcal{C}_f$,
    \item if $I\in \mathcal{C}_f$ then $(I:J)\in \mathcal{C}_f$ for every $J\subseteq R$,
    \item if $I$ and $J$ are in $\mathcal{C}_f$, then $I+J$ and $I\cap J$ are in $\mathcal{C}_f$.    
\end{enumerate}
If $I$ is an ideal in $\mathcal{C}_f$, we say that $I$ is a \textit{Knutson ideal associated to $f$}.
\end{mydef}

\begin{theorem}\label{thm: Knutson_radical} \cite[Main Theorem 1]{Seccia} Let $R = \mathbb{K}[u_1,\ldots,u_n]$ be a polynomial ring over any field $\mathbb{K}$ and let $f$ be a polynomial in $R$ such that $\init_<(f)= \prod_{i=1}^n u_i$ with respect to some term order $<$. If $I\in \mathcal{C}_f$, then $\init_<(I)$ is squarefree, and hence $I$ is radical.
\end{theorem}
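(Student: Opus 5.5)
The plan is the Frobenius-splitting argument of Knutson, as organised by Seccia: prove the statement first over a field of characteristic $p>0$, then pass to arbitrary $\mathbb{K}$ by reduction modulo $p$. The point is that the class $\mathcal{C}_f$ of Definition \ref{def: Knutson} sits inside the class of ideals \emph{compatibly split} by a Frobenius splitting built from $f$. Compatibly split ideals are automatically radical, and their initial ideals are again compatibly split, by a monomial splitting.

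\textbf{Step 1 (the splitting).} Let $\operatorname{char}\mathbb{K}=p$ and let $\mathrm{Tr}:R\to R$ be the standard $R^p$-linear trace. On monomials it sends $u^{a}$ to $u^{(a+1)/p-1}$ when every $a_i\equiv p-1 \pmod p$, and to $0$ otherwise. Put $\varphi(g)=\mathrm{Tr}(f^{p-1}g)$. Since $\init_<(f)=u_1\cdots u_n$, we have $\init_<(f^{p-1})=(u_1\cdots u_n)^{p-1}$. Every other monomial of $f^{p-1}$ is smaller and has total exponent vector bounded by that of $(u_1\cdots u_n)^{p-1}$ coordinatewise. I expect this to need a short degree argument using that $f$ has squarefree leading term of full support. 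Granting it, $(u_1\cdots u_n)^{p-1}$ is the only monomial of $f^{p-1}$ with all exponents $\equiv p-1$, so $\varphi(1)=1$ and $\varphi$ is a Frobenius splitting.

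\textbf{Step 2 (closure properties).} Call $I$ compatibly split if $\varphi(I)\subseteq I$. Then:
\begin{enumerate}[(a)]
\item $\langle f\rangle$ is compatibly split, since $\varphi(fg)=\mathrm{Tr}(f^p g)=f\,\mathrm{Tr}(g)$.
\item Sums and intersections of compatibly split ideals are compatibly split.
\item If $I$ is compatibly split and $aJ\subseteq I$ for all $a\in(I:J)$, then $\varphi((I:J))\subseteq(I:J)$. Indeed, for $j\in J$ we have $j^p\varphi(a)=\varphi(j^pa)\in I$, and $I$ is radical (see (d)), so $j\varphi(a)\in I$.
\item Every compatibly split ideal is radical: if $g^p\in I$, then $g=\varphi(g^p)\in I$.
\end{enumerate}
Hence every $I\in\mathcal{C}_f$ is compatibly split, and in particular radical.

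\textbf{Step 3 (degeneration to the initial ideal).} This is the step I expect to be the main obstacle. I want to show that $\init_<(I)$ is compatibly split by the monomial splitting $\varphi_0=\mathrm{Tr}((u_1\cdots u_n)^{p-1}\,\cdot)$. The plan is to use a weight vector $w$ realising $<$ on the finitely many relevant polynomials, and to form the flat family $I_t$ over $\mathbb{K}[t]$ together with the corresponding family of splittings $\varphi_t$ attached to $f_t$. Compatibility is a closed condition, so it passes to the fibre $t=0$, where $f_0=u_1\cdots u_n$ and $I_0=\init_<(I)$. The delicate part is the bookkeeping with the leading form of $\mathrm{Tr}(f^{p-1}g)$, and checking that the identity $\init(\varphi(g))=\varphi_0(\init g)$ holds whenever the right-hand side is nonzero.

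Once that is done, the ideals compatibly split by $\varphi_0$ are exactly the Stanley–Reisner (squarefree monomial) ideals. To see this, note that a monomial ideal stable under $\varphi_0$ is radical by Step 2(d), and a radical monomial ideal is squarefree. So $\init_<(I)$ is squarefree in characteristic $p$.

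\textbf{Step 4 (arbitrary field).} Over $\mathbb{Q}$, or a finitely generated extension of the prime field (which suffices, since all data are finitely generated and Gröbner bases are stable under field extension), every ideal in $\mathcal{C}_f$ is obtained from $\langle f\rangle$ by finitely many colon, sum and intersection operations. Each such operation is computed by a Gröbner basis computation with finitely many denominators. For all but finitely many primes $p$, reduction modulo $p$ commutes with these operations and preserves the initial ideal. A single such $p$ also keeps $\init_<(f)=u_1\cdots u_n$, so Step 3 applies and $\init_<(I)$ is squarefree. Finally, $I$ is radical by the general fact, recalled in Subsection \ref{subsec: Grobner}, that a squarefree initial ideal forces radicality.
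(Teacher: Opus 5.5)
The paper does not prove this statement; it quotes it from Seccia. Your outline follows the route of the cited proof: Knutson's splitting $\varphi=\mathrm{Tr}(f^{p-1}\,\cdot)$ in characteristic $p$, closure of compatibly split ideals under sums, intersections and colons, degeneration to $\init_<$, and Seccia's reduction modulo $p$ for characteristic $0$. Steps 2 and 4 are fine in outline. As written, however, Step 1 rests on a false claim, and Step 3, which is the actual content of the theorem, is only announced. One observation repairs both: compare monomials in the term order, not coordinatewise.

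\textbf{Step 1.} Write $M=u_1\cdots u_n$. The other monomials of $f^{p-1}$ need not be bounded coordinatewise by $M^{p-1}$. No degree argument can show this, because $<$ need not be degree-compatible. For example, $f=u_1u_2+u_2^2$ with lex $u_1>u_2$ has $\init_<(f)=u_1u_2$, yet $f^{p-1}$ contains $u_2^{2(p-1)}$. The correct argument is this. A monomial all of whose exponents are $\equiv p-1 \pmod p$ has the form $M^{p-1}m^p$ for some monomial $m$. If it occurs in $f^{p-1}$, then $M^{p-1}m^p\le \init_<(f^{p-1})=M^{p-1}$, so $m^p\le 1$ and hence $m=1$. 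Therefore $\mathrm{Tr}(f^{p-1})$ equals the leading coefficient, which is $1$.

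\textbf{Step 3.} The same comparison proves the identity you single out, so the flat family and the appeal to closedness can be dropped. That appeal would itself need justification, since $\mathrm{Tr}$ does not behave linearly in a parameter. Let $\varphi_0=\mathrm{Tr}(M^{p-1}\,\cdot)$ and $\init_<(g)=c\,u^b$. The monomials of $f^{p-1}g$ that survive $\mathrm{Tr}$ have the form $M^{p-1}m^p\le M^{p-1}u^b$, so $m^p\le u^b$.
\begin{itemize}
\item If $u^b$ is not a $p$-th power, then $\varphi_0(c\,u^b)=0$.
\item If $u^b=m_0^p$, then $m\le m_0$, with equality only for the leading term of $f^{p-1}g$. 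Hence $\init_<(\varphi(g))=c^{1/p}m_0=\varphi_0(c\,u^b)$.
\end{itemize}
Every monomial of $\init_<(I)$ is $\init_<(g)$ for some $g\in I$, and $\varphi(g)\in I$. So $\varphi_0$ maps $\init_<(I)$ into itself. Your closing argument then finishes the proof: a monomial ideal compatibly split by $\varphi_0$ is radical, hence squarefree.

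\textbf{Smaller slips.} In 2(c) the rule is $\varphi(j^pa)=j\,\varphi(a)$, not $j^p\varphi(a)$. With the correct rule you get $j\varphi(a)\in I$ directly, without using radicality. Also, the monomial formula for $\mathrm{Tr}$ and the identity $\varphi(g^p)=g$ presuppose a perfect ground field. So in characteristic $p$ you should first extend scalars to $\overline{\mathbb{K}}$. This is harmless, because field extension commutes with sums, intersections, colons and initial ideals. The same remark covers the characteristic-$p$ fields you reach in Step 4 from a finitely generated extension of $\mathbb{Q}$.
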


Define the polynomial 
\[
F_{\varepsilon}^{xz} = \prod_{i=1}^N (x_i-\varepsilon_iy_i)(y_i-\varepsilon_iz_i)z_i \in\mathcal{R}\,.
\]

\noindent Under the monomial order $x_N>y_N>z_N>x_{N-1}>y_{N-1}>z_{N-1}>\cdots > x_1>y_1>z_1$, we see that 
\[\init_<(F_\varepsilon^{xz}) = \prod_{i=1}^Nx_iy_iz_i.\]

\begin{theorem}\label{thm: Knutson_product}
The ideals $I_\varepsilon^x$, $I_\varepsilon^z$ and $I_\varepsilon^xI_\varepsilon^z$ are Knutson ideals in $\mathcal{C}_{F_{\varepsilon}^{xz}}$.
\end{theorem}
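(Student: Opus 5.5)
The plan is to produce every ideal in the statement from the principal ideal $\langle F_\varepsilon^{xz}\rangle$ using only the closure operations of Definition~\ref{def: Knutson}. The colon operation isolates principal ideals generated by sub-products of the linear factors of $F_\varepsilon^{xz}$, and the sum operation assembles these into $I_\varepsilon^x$, $I_\varepsilon^z$ and $I_\varepsilon^xI_\varepsilon^z$. The hypothesis needed to even speak of $\mathcal{C}_{F_\varepsilon^{xz}}$ is already verified before the statement, namely that $\init_<(F_\varepsilon^{xz})=\prod_i x_iy_iz_i$ is squarefree. So only the closure argument is left.

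\emph{Step 1 (a colon lemma).} Since $\mathcal R$ is an integral domain, if $G$ divides $F$ then $(\langle F\rangle:\langle G\rangle)=\langle F/G\rangle$. Indeed, $\langle F/G\rangle$ is clearly contained in the colon ideal. Conversely, if $aG\in\langle F\rangle$, write $aG=bF=b(F/G)G$ and cancel $G$ to get $a=b\,(F/G)$.

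\emph{Step 2 (principal pieces).} The factors of $F_\varepsilon^{xz}$ are
\[
g_i=y_i-\varepsilon_iz_i,\qquad h_i=x_i-\varepsilon_iy_i,\qquad z_i,\qquad i=1,\dots,N.
\]
These are $3N$ pairwise non-associate linear forms. Fix any $i,j\in\{1,\dots,N\}$, allowing $i=j$. Then $g_ih_j$ is a product of two distinct factors of $F_\varepsilon^{xz}$, hence it divides $F_\varepsilon^{xz}$. Applying Step 1 with $G=F_\varepsilon^{xz}/(g_ih_j)$ shows $\langle g_ih_j\rangle\in\mathcal{C}_{F_\varepsilon^{xz}}$. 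In the same way, taking $G=F_\varepsilon^{xz}/g_i$ and $G=F_\varepsilon^{xz}/h_j$ gives $\langle g_i\rangle,\langle h_j\rangle\in\mathcal{C}_{F_\varepsilon^{xz}}$.

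\emph{Step 3 (sums).} Closure under finite sums gives
\[
I_\varepsilon^x=\sum_i\langle g_i\rangle,\qquad I_\varepsilon^z=\sum_j\langle h_j\rangle .
\]
The product $I_\varepsilon^xI_\varepsilon^z$ is generated by the products $g_ih_j$ of generators, so
\[
I_\varepsilon^xI_\varepsilon^z=\sum_{i,j}\langle g_ih_j\rangle .
\]
All three ideals therefore lie in $\mathcal{C}_{F_\varepsilon^{xz}}$.

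There is no serious obstacle. The only point needing care is the diagonal case $i=j$: one must check that $g_i$ and $h_i$ are genuinely different factors of $F_\varepsilon^{xz}$, so that $g_ih_i$ divides it. This holds because $F_\varepsilon^{xz}$ contains both $(x_i-\varepsilon_iy_i)$ and $(y_i-\varepsilon_iz_i)$ for each $i$, and this is exactly why $F_\varepsilon^{xz}$ was chosen as it was. As a consequence, $I_\varepsilon^x\cap I_\varepsilon^z$ is also Knutson, and Theorem~\ref{thm: Knutson_radical} applies to all of these ideals. This is what will later be used to compare $I_\varepsilon^xI_\varepsilon^z$ with $I_\varepsilon^x\cap I_\varepsilon^z$.
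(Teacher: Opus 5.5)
Your proof is correct and follows the paper's route: principal ideals generated by factors of $F_\varepsilon^{xz}$ come from colon ideals, and $I_\varepsilon^x$, $I_\varepsilon^z$ and $I_\varepsilon^xI_\varepsilon^z$ are then assembled as finite sums. The one difference is in how you get $\langle g_ih_j\rangle$: you take it directly as the colon ideal $(\langle F_\varepsilon^{xz}\rangle:\langle F_\varepsilon^{xz}/(g_ih_j)\rangle)$, whereas the paper writes it as the intersection $\langle g_i\rangle\cap\langle h_j\rangle$ and uses the lcm property of principal ideals in a UFD, so your version needs only cancellation in a domain.
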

\begin{proof}
Since $\init_<(F_\varepsilon^{xz})$ is squarefree, we may consider the set of Knutson ideals defined by $F_\varepsilon^{xz}$. Each factor of $F_\varepsilon^{xz}$ defines a principal Knutson ideal in $\mathcal{C}_{F_{\varepsilon}^{xz}}$ by property $(2)$ of Definition \ref{def: Knutson}. In particular, $\langle x_i-\varepsilon_iy_i \rangle$ and $\langle y_i-\epsilon_iz_i \rangle$ are Knutson ideals for $i=1,\ldots, N$. By property $(3)$ of Definition \ref{def: Knutson}, intersections and sums of Knutson ideals are also Knutson ideals. Therefore, $\langle x_i-\varepsilon_iy_i \rangle\cap \langle y_j-\varepsilon_jz_j \rangle = \langle (x_i-\varepsilon_iy_i)(y_j-\varepsilon_jz_j)\rangle$ is a Knutson ideal  for every pair $i,j$. This follows from the fact that the intersection of finitely many principal ideals in a unique factorization domain is again a principal ideal (generated by the least common multiple of the irreducible factors \cite[Lemma 13]{AldiDaSilvaGrandini24}). Since $I_\varepsilon^xI_\varepsilon^z$ can be written as a sum of Knutson ideals, namely \[I_\varepsilon^xI_\varepsilon^z=\displaystyle\sum_{i,j} \langle (x_i-\varepsilon_iy_i)(y_j-\varepsilon_jz_j)\rangle,\] then it is Knutson too. Similarly, $I_\varepsilon^x$ and $I_\varepsilon^z$ are Knutson since they are also a sum of Knutson ideals.
\end{proof}

\begin{cor}
$I_\varepsilon^xI_\varepsilon^z = I_\varepsilon^x\cap I_\varepsilon^z$, $I_\varepsilon^xI_\varepsilon^y = I_\varepsilon^x\cap I_\varepsilon^y$, and $I_\varepsilon^yI_\varepsilon^z = I_\varepsilon^y\cap I_\varepsilon^z$.
\end{cor}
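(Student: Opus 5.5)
The inclusion $I_\varepsilon^xI_\varepsilon^z\subseteq I_\varepsilon^x\cap I_\varepsilon^z$ holds for any pair of ideals. The content of the corollary is the reverse inclusion, and the plan is to get it from radicality via Theorem \ref{thm: Knutson_radical}. By Theorem \ref{thm: Knutson_product}, $I_\varepsilon^xI_\varepsilon^z\in\mathcal{C}_{F_\varepsilon^{xz}}$. Since $\init_<(F_\varepsilon^{xz})=\prod_i x_iy_iz_i$ is the product of all $3N$ variables of $\mathcal{R}$, Theorem \ref{thm: Knutson_radical} applies and shows that $I_\varepsilon^xI_\varepsilon^z$ is radical. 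Theorem \ref{thm: Knutson_radical} is stated over an arbitrary field, so it applies over $\R$ directly and there is no characteristic-transfer issue to address.

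Next we compare radicals. For any ideals $I,J$ one has $\sqrt{IJ}=\sqrt{I\cap J}$, because $(I\cap J)^2\subseteq IJ\subseteq I\cap J$. Both $I_\varepsilon^x$ and $I_\varepsilon^z$ are prime, as noted above (their quotients are polynomial rings), so $I_\varepsilon^x\cap I_\varepsilon^z$ is radical. Combining these facts gives
\[
I_\varepsilon^xI_\varepsilon^z=\sqrt{I_\varepsilon^xI_\varepsilon^z}=\sqrt{I_\varepsilon^x\cap I_\varepsilon^z}=I_\varepsilon^x\cap I_\varepsilon^z,
\]
which is the first identity.

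The other two identities should follow by symmetry. The $S_3$-action permuting the triples $(x_i,y_i,z_i)$ permutes the three ideals. The transposition $(y\,z)$ sends $y_i-\varepsilon_iz_i\mapsto z_i-\varepsilon_iy_i$, which generates the same ideal as $y_i-\varepsilon_iz_i$ because $\varepsilon_i=\pm1$. More precisely, we check that a suitable cyclic permutation maps the pair $\{I^x_\varepsilon,I^z_\varepsilon\}$ to $\{I^x_\varepsilon,I^y_\varepsilon\}$ and to $\{I^y_\varepsilon,I^z_\varepsilon\}$. The cyclic map $x\to y\to z\to x$ sends $I^x\to I^y$, $I^y\to I^z$ and $I^z\to I^x$, so it carries $\{I^x,I^z\}$ to $\{I^y,I^x\}$, and applying it once more gives $\{I^z,I^y\}$. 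A ring automorphism preserves products and intersections, so the two remaining identities follow. Alternatively, one could rerun the argument of Theorem \ref{thm: Knutson_product} with the cyclically permuted polynomial $F_\varepsilon$, using a correspondingly permuted monomial order.

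The main obstacle is really contained in Theorem \ref{thm: Knutson_product}: one must make sure that every factor of $F_\varepsilon^{xz}$ yields a Knutson ideal. This works because $\langle g\rangle=(\langle F\rangle:\langle F/g\rangle)$ for each factor $g$, which is an instance of property (2). The hypothesis of Theorem \ref{thm: Knutson_radical}, that the leading term is the full product of variables, also has to be verified for the chosen order. Once these are in place, the corollary is a short radical argument.
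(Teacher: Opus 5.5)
Your proposal is correct and follows the paper's proof. The steps match: radicality of $I_\varepsilon^xI_\varepsilon^z$ from the Knutson theorem applied to $F_\varepsilon^{xz}$, then $\sqrt{I_\varepsilon^xI_\varepsilon^z}=\sqrt{I_\varepsilon^x\cap I_\varepsilon^z}=I_\varepsilon^x\cap I_\varepsilon^z$ using primality, and the remaining identities via the cyclic shift $x_i\to y_i\to z_i\to x_i$. Transporting the established identity by that ring automorphism, as you do, is slightly cleaner than the paper's remark about also permuting the monomial order, but the argument is the same.
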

\begin{proof} From the cyclic shift in variables (including in the monomial order) $x_i\rightarrow y_i \rightarrow z_i\rightarrow x_i$, it's sufficient to prove only the first identity. 

By Theorem \ref{thm: Knutson_radical}, since $\init_<(F_\varepsilon^{xz}) = \prod_{i=1}^Nx_iy_iz_i$, all of the Knutson ideals in $C_{F_\varepsilon^{xz}}$ are radical. Therefore, 
\[I_\varepsilon^xI_\varepsilon^z=\sqrt{I_\varepsilon^xI_\varepsilon^z} = \sqrt{I_\varepsilon^x\cap I_\varepsilon^z}=\sqrt{I_\varepsilon^x}\cap\sqrt{I_\varepsilon^z} = I_\varepsilon^x\cap I_\varepsilon^z\,.\]
\end{proof}

\begin{rem}\label{rem: Knutson Grobner}
By \cite{Knutson}, the natural generators of $I_\varepsilon^xI_\varepsilon^z$ are also a Gr\"obner bases for the ideal with respect to this monomial ordering, providing another proof of some of the statements in Proposition \ref{prop: GB_gens2} in the Appendix. In fact, the entire poset of Knutson ideals in $C_{F_\varepsilon^{xz}}$ is well-understood. 
\end{rem}

\subsection{Generating $\mathcal{I}_{\epsilon}$ }\label{subsec: main}

We are now prepared to describe a generating set for  $\mathcal{I}_{\epsilon}=I_{\epsilon}^x\cap I_{\epsilon}^y\cap I_{\epsilon}^z$ in terms of the polynomials $T^{ijk}_\varepsilon$, $P^{ij}_\varepsilon\in \mathcal I_\varepsilon$ introduced in, respectively, \eqref{eq:11} and \eqref{eq:14}. Computing a generating set for $I_{\epsilon}^x\cap I_{\epsilon}^y\cap I_{\epsilon}^z$ involves computing a Gr\"obner basis for \[J_\epsilon=tI_{\epsilon}^x + (1-t)I_{\epsilon}^y I_{\epsilon}^z \]

\noindent using the fact that $I_{\epsilon}^y I_{\epsilon}^z = I_{\epsilon}^y\cap I_{\epsilon}^z$. When computing a Gr\"obner basis for $J_\epsilon$, an elimination order is needed where the variable $t$ has the highest weight. For this section, we will use the lexicographic order $<_N$ defined by 
\[ t>x_N>y_N>z_N>x_{N-1}>y_{N-1}>z_{N-1}>\cdots > x_1>y_1>z_1.\]

In applying Buchberger's algorithm to the ideal $J_\epsilon$, we start with a generating set $\mathcal{G}$ of $J_\epsilon$, compute $S$-polynomials between elements of $\mathcal{G}$, reduce by $\mathcal{G}$ using the division algorithm to get a remainder, and adjoin the remainder to $\mathcal{G}$ if nonzero (see Section \ref{subsec: Grobner} for details). An important observation is that any element of $\mathcal{G}$ at any iteration of Buchberger's algorithm applied to $J_\epsilon$ with $<_N$ will only involve three indices. We will say that a polynomial $f$ is \emph{defined by at most three indices} if there exists  $i,j$ and $k$ such that $f\in \mathbb{R}[t,x_i,y_i,z_i,x_j,y_j,z_j,x_k,y_k,z_k]$.

The key to the argument that follows is to show that the division algorithm applied to any $S$-polynomial involving two generators defined by some fixed subset $\mathfrak{I}$ of indices $i,j,k,r,s,t$ will only involve that set of indices. Consider two polynomials $g^{ijk}$ and $g^{rst}$, each defined by at most three indices, with $S$-polynomial
\[S(g^{ijk},g^{rst}) = \frac{\init_{<_N}(g^{rst})}{\mathrm{GCD}(\init_{<_N}(g^{ijk}),\init_{<_N}(g^{rst}))}g^{ijk}- \frac{\init_{<_N}(g^{ijk})}{\mathrm{GCD}(\init_{<_N}(g^{ijk}),\init_{<_N}(g^{rst}))}g^{rst}.\]

\noindent We label the reduced polynomial on the right hand side by $f_{\mathfrak{I}}$. The division algorithm sequentially checks whether the lead term of $f_{\mathfrak{I}}$ is divisible by the lead term of each element in the current list of polynomials $\mathcal{G}$ from Buchberger's algorithm. We claim that any element of $\mathcal{G}$ whose lead term divides the lead term of $f_{\mathfrak{I}}$ must use only indices in $\mathfrak{I}$. A priori, it is possible that the division algorithm uses elements $g\in\mathcal{G}$ whose lead term has indices in $\mathfrak{I}$, but which involves other terms with indices not in $\mathfrak{I}$. We can show that there is an ordering of the generators in $\mathcal{G}$ where this does not happen. We will do this be converting the problem into one where $\mathfrak{I}\subseteq\{1,2,3,4,5,6\}$, as demonstrated in the next example.

\begin{example}\label{ex: index shift}
Given two generators of the ordered list $\mathcal{G}$ involving at most three indices, define an order preserving bijection between the indices $i,j,k,r,s,t$ (where some indices may be equal) with a subset of the indices $\{1,2,3,4,5,6\}$. 

For instance, if we are considering $S(T^{487}_{\varepsilon},T^{289}_{\varepsilon})$, then $\mathfrak{I}=\{2,4,7,8,9\}$, which is in bijection with $\{1,2,3,4,5\}$ via any permutation $\sigma\in S_9$ which sends $2$ to $1$, $4$ to $2$, $7$ to $3$, $8$ to $4$ and $9$ to $5$. We apply $\sigma$ to a polynomial by applying it to the indices of the variables in its support. Let $\varepsilon_\sigma$ be the signature of length five where entry $k$ in $\varepsilon_\sigma$ is equal to entry $\sigma^{-1}(k)$ in $\varepsilon$. For the $S$-polynomial, we get
\[ S(T^{487}_{\varepsilon},T^{289}_{\varepsilon}) = \sigma^{-1}(S(T^{243}_{\varepsilon_\sigma},T^{145}_{\varepsilon_\sigma})).  \]

\noindent If $\sum f_ig_j +r\in \mathbb{R}[x_1,\ldots,z_5]$ is the standard form for $S(T^{243}_{\varepsilon_\sigma},T^{145}_{\varepsilon_\sigma})$ with respect to $\sigma(\mathcal{G})$, then the hope is that 
\[\sigma^{-1}(\sum f_ig_j +r) =\sum \sigma^{-1}(f_i)\sigma^{-1}(g_j) +\sigma^{-1}(r)\]
is the standard form for $S(T^{487}_{\varepsilon},T^{289}_{\varepsilon})$ with respect to $\mathcal{G}$. There are several sufficient conditions to guarantee that we do get a standard form:

\begin{itemize}
    \item Taking initial terms and applying $\sigma$ should commute. That is, $\init_{<_5}(\sigma(f)) = \sigma(\init_{<_9}(f))$ so that applying $\sigma^{-1}$ results in a term which is actually a leading term.
    \item The indices appearing in the initial term of an element of $g\in \mathcal{G}$ should be the same as the indices appearing in $g$. This is to prevent issues with two elements having the same initial term but involve a different set of indices in their remaining terms. 
    \item Each element of $\mathcal{G}$ should involve at most three indices. This ensures that we only need to reduce to the $N=6$ case for each $S$-polynomial computation. 
    \item $\sigma^{-1}(g_i)$ must appear before $\sigma^{-1}(g_j)$ in the ordered list of generators $\mathcal{G}$ if $i<j$ and $g_i,g_j$ are in the ordered list for the current stage of Buchberger's algorithm for the $N=6$ case. Polynomial division depends on the ordering of the polynomials being divided (except when that list of polynomials is already a Gr\"obner basis).
\end{itemize}

\noindent Assuming that these conditions are met (and we will prove that this is enough below), then the standard form in the indices $\{1,2,3,4,5\}$ provides a standard form for the original $S$-polynomial in the indices $\{2,4,7,8,9\}$.

Therefore, if all $S$-polynomials reduce to $0$ for the indices $\{1,2,3,4,5,6\}$, then all $S$-polynomials reduce to $0$ for all arbitrary pairs of generators in general indices. Since we have verified the $N=6$ case by computer for all signatures, the result follows.  
\end{example}

\begin{rem}
It is worth noting that this procedure is reminiscent of a similar situation for Gr\"obner bases of patches of Hessenberg varieties. Similar technical details regarding $\init_<(\cdot)$ commuting with a ring isomorphism had to be checked in \cite{CDSHR}. In the previous example, the index bijection defines a ring isomorphism between $\mathbb{R}[\{x_i,y_i,z_i\}|i=2,4,7,8,9]$ and $\mathbb{R}[\{x_i,y_i,z_i\}|i=1,2,3,4,5]$, producing an easier mapping than the $\psi_w$ considered in \cite{CDSHR}.
\end{rem}

\begin{lem}\label{lem:  n<=6 case}
Fix a signature $\varepsilon$ and let $I_\varepsilon^x, I_\varepsilon^y,$ and $I_\varepsilon^z$ be ideals of $\mathcal{R}$ as defined above, endowed with the monomial order $<_N$. Assume that $N\leq 6$. Then 
\[\mathcal{G}_\epsilon= \{T^{ijk}_{\varepsilon}|1\leq i,j,k\leq N\} \cup \{P^{ij}_{\varepsilon}| \varepsilon_i=\varepsilon_j=1\}\]

\noindent is a generating set of $\mathcal{I}_{\varepsilon}=I_\varepsilon^x\cap I_\varepsilon^y\cap I_\varepsilon^z$. Additionally, for each element $g$ of the reduced Gr\"obner basis of $J_\epsilon^N=tI_{\epsilon}^x + (1-t)I_{\epsilon}^y I_{\epsilon}^z$ with respect to $<_N$, there exist at most three indices $\mathfrak{I}\subseteq \{i,j,k\}$ such that $g\in \mathbb{R}[t,x_r,y_r,z_r|r\in \mathfrak{I}]$. Furthermore, $\init_{<_N}(g)$ uses all of the indices of $\mathfrak{I}$.
\end{lem}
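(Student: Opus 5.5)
This is a finite statement: for each $N\le 6$ there are only $2^N$ signatures. The plan is therefore a computer-assisted verification, organized so that the output can be checked directly.

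\textbf{Step 1 (setting up $J_\epsilon^N$).} For fixed $N\le 6$ and fixed $\epsilon\in\{\pm1\}^N$, write down generators of
\[
J_\epsilon^N=tI_\epsilon^x+(1-t)I_\epsilon^yI_\epsilon^z .
\]
The ideal $I_\epsilon^x$ contributes the $N$ elements $t(y_i-\epsilon_i z_i)$. The product $I_\epsilon^yI_\epsilon^z$ contributes the $N^2$ elements $(1-t)(z_i-\epsilon_i x_i)(x_j-\epsilon_j y_j)$. Here we use the Corollary identifying $I_\epsilon^yI_\epsilon^z$ with $I_\epsilon^y\cap I_\epsilon^z$, so that
\[
J_\epsilon^N\cap\mathcal R=I_\epsilon^x\cap I_\epsilon^y\cap I_\epsilon^z=\mathcal I_\epsilon .
\]
This equality holds by the elimination property of $<_N$ with $t$ largest (see \cite{E}).

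\textbf{Step 2 (reduced Gr\"obner basis and the index claims).} Run Buchberger's algorithm, or any certified implementation, to obtain the reduced Gr\"obner basis $\mathcal B$ of $J_\epsilon^N$ with respect to $<_N$. For each $g\in\mathcal B$, record its support index set $\mathfrak I(g)=\{r : g \text{ involves } x_r,y_r \text{ or } z_r\}$. Then check two things:
\begin{enumerate}[i)]
\item $|\mathfrak I(g)|\le 3$;
\item every index in $\mathfrak I(g)$ appears in $\init_{<_N}(g)$.
\end{enumerate}
These are exactly the last two assertions of the lemma. The reduced basis is unique, so this check is well posed and independent of choices made during the algorithm.

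We can cut the workload using Proposition~\ref{prop: poly properties}. Relabeling indices by $\rho\in S_N$ permutes the signatures, but it does not preserve $<_N$. Hence the symmetry only tells us which signatures are likely to behave alike; each signature must still be computed. Since there are at most $2^6=64$ of them, this is feasible.

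\textbf{Step 3 (generation).} The elements of $\mathcal B$ not involving $t$ form a Gr\"obner basis $\mathcal B_0$ of $\mathcal I_\epsilon$. The inclusion $\langle\mathcal G_\epsilon\rangle\subseteq\mathcal I_\epsilon$ already follows from Theorem~\ref{theo:unitens}, as noted in Examples~\ref{ex:16} and~\ref{ex:18}. For the reverse inclusion, reduce each element of $\mathcal B_0$ modulo a Gr\"obner basis of $\langle\mathcal G_\epsilon\rangle$ and check that the remainder is zero. Alternatively, check that the two ideals have equal initial ideals, which together with the inclusion forces equality.

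A more conceptual shortcut is to compare Hilbert functions in each multidegree $D$. The space $\mathcal I_\epsilon^D$ is computable as the kernel of the restriction map to the three linear subspaces of $V_\epsilon$, so one can verify that $\mathcal G_\epsilon$ spans it in degrees up to the maximal degree of $\mathcal B_0$. In practice, the direct ideal-membership test is the cleanest.

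\textbf{Main obstacle.} I expect the hard part to be computational size rather than ideas: $N=6$ means 19 variables and $2^6$ signatures. The first thing to try is exploiting multihomogeneity. $J_\epsilon^N$ is homogeneous for the $N$-grading once $t$ is assigned degree $0$, so the computation can be organized by multidegree. Also, by the index-locality being verified, no basis element mixes more than three indices, so the expected sizes are modest.

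A second subtlety is making sure that the reduced Gr\"obner basis, and not just some Gr\"obner basis, has the three-index and initial-term properties. Since reduction can only remove terms, I would check these properties directly on the computed reduced basis.
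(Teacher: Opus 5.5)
Your proposal is correct and is essentially the paper's proof. The authors also establish this lemma by a direct Macaulay2 computation, for every $N\le 6$ and every signature, of the Gr\"obner basis of $J_\epsilon^N=tI_\epsilon^x+(1-t)I_\epsilon^yI_\epsilon^z$, starting from the same generators $t(y_i-\varepsilon_iz_i)$ and $(1-t)(z_i-\varepsilon_ix_i)(x_j-\varepsilon_jy_j)$.

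The only extra ingredient in the paper is a fixed ordering of these generators. As you note, this ordering does not affect the reduced Gr\"obner basis, since that basis is unique. It is used later, in the proof that reduces the general case to $N=6$, to match intermediate stages of Buchberger's algorithm across index sets.
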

\begin{proof}
This can be checked directly using Macaulay2, for example by using the code provided in the ancillary files. 

Note that the ideal $J_\epsilon^N$ can be generated by $\{t(y_i-\varepsilon_iz_i),(1-t)(z_i-\varepsilon_ix_i)(x_j-\varepsilon_jy_j)\}_{i,j}$ where $1\leq i,j\leq N$, and we choose to  order the generators so that the elements $t(y_i-\varepsilon_iz_i)$ appear first in increasing order with respect to the index $i$, followed by the elements $(1-t)(z_i-\varepsilon_ix_i)(x_j-\varepsilon_jy_j)$, where $(i,j)$ appears before $(i',j')$ if $i<i'$ or $i=i'$ and $j<j'$.
\end{proof}

\begin{theorem}\label{thm: reduce to n=6}
Fix $N\in\mathbb{Z}_{>0}$ and a signature $\varepsilon$. Let $I_\varepsilon^x, I_\varepsilon^y,$ and $I_\varepsilon^z$ be ideals of $\mathcal{R}$ as defined above, endowed with the monomial order $<_N$. Then
\[\mathcal{G}_\epsilon= \{T^{ijk}_{\varepsilon}|1\leq i,j,k\leq N\} \cup \{P^{ij}_{\varepsilon}| \varepsilon_i=\varepsilon_j=1\}\]

\noindent is a generating set of $\mathcal{I}_{\varepsilon}=I_\varepsilon^x\cap I_\varepsilon^y\cap I_\varepsilon^z$.
\end{theorem}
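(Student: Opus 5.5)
The plan is to run Buchberger's algorithm for $J_\varepsilon^N=tI_\varepsilon^x+(1-t)I_\varepsilon^yI_\varepsilon^z$ with respect to the elimination order $<_N$, and to show that its output for arbitrary $N$ is obtained from the $N\le 6$ computations of Lemma \ref{lem:  n<=6 case} by relabelling indices. Let $\mathcal{G}^N$ be the union, over all order-preserving injections $\sigma:\{1,\dots,m\}\to\{1,\dots,N\}$ with $m\le 6$, of the images $\sigma(\mathcal{G})$, where $\mathcal{G}$ is the reduced Gröbner basis of $J^m_{\varepsilon_\sigma}$ and $\varepsilon_\sigma$ is the restricted signature. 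Each such $\sigma$ extends to a ring map that sends $J^m_{\varepsilon_\sigma}$ into $J^N_\varepsilon$, because it sends the generators $t(y_i-\varepsilon_iz_i)$ and $(1-t)(z_i-\varepsilon_ix_i)(x_j-\varepsilon_jy_j)$ to generators of the same form. Hence $\mathcal{G}^N\subseteq J^N_\varepsilon$. Moreover $\mathcal{G}^N$ contains the generators of $J^N_\varepsilon$, since each involves at most two indices. So $\mathcal{G}^N$ generates $J^N_\varepsilon$.

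First I check that $\sigma$ commutes with taking initial terms. The order $<_N$ is lexicographic with $t$ largest and the blocks $(x_i,y_i,z_i)$ ordered by $i$, and $\sigma$ preserves the order of indices. Therefore $\sigma$ preserves the lex comparison of any two monomials in its image, and $\init_{<_N}(\sigma f)=\sigma(\init_{<_m}f)$.

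Next I verify Buchberger's criterion for $\mathcal{G}^N$. Take $g=\sigma(g')$ and $h=\sigma'(h')$ in $\mathcal{G}^N$. By Lemma \ref{lem:  n<=6 case}, each is supported on at most three indices. The union $\mathfrak{I}$ of their index sets therefore has at most six elements. Let $\rho:\{1,\dots,|\mathfrak{I}|\}\to\mathfrak{I}$ be the order-preserving bijection. Then $g,h$ are $\rho$-images of elements of $\mathcal{G}^{|\mathfrak{I}|}$, and since the $N\le 6$ basis is closed under restriction to index subsets, these preimages lie in the reduced Gröbner basis of $J^{|\mathfrak{I}|}$. That basis is a Gröbner basis, so the $S$-polynomial of the preimages has a standard representation $\sum a_\ell q_\ell$ with $\init(a_\ell q_\ell)\le \init(S)$. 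Applying $\rho$ and using the previous step, we get a standard representation of $S(g,h)$ in terms of elements of $\mathcal{G}^N$, with remainder zero.

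The lcm criterion, in the form "every $S$-pair has a standard representation", does not depend on the order of the divisors. This sidesteps the ordering issues raised in Example \ref{ex: index shift}. Hence $\mathcal{G}^N$ is a Gröbner basis of $J^N_\varepsilon$. By elimination, $\mathcal{G}^N\cap\mathbb{R}[x,y,z]$ generates $J^N_\varepsilon\cap\mathcal{R}=I^x_\varepsilon\cap I^y_\varepsilon I^z_\varepsilon$, which equals $\mathcal{I}_\varepsilon$ by the Corollary of Subsection \ref{subsec: Knutson}.

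Finally, each $t$-free element of $\mathcal{G}^N$ is $\sigma(g')$ with $g'$ in the ideal $\mathcal{I}_{\varepsilon_\sigma}$ for $m\le 6$. By Lemma \ref{lem:  n<=6 case}, $g'$ is a combination of the $T^{ijk}_{\varepsilon_\sigma}$ and the $P^{ij}$ with $\varepsilon_i=\varepsilon_j=1$. Since $\sigma(T^{ijk}_{\varepsilon_\sigma})=T^{\sigma i\,\sigma j\,\sigma k}_\varepsilon$ and $\sigma(P^{ij})=P^{\sigma i\,\sigma j}$, every $t$-free element of $\mathcal{G}^N$ lies in the ideal generated by $\mathcal{G}_\varepsilon$. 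The reverse inclusion was shown in Examples \ref{ex:16} and \ref{ex:18}, which proves the theorem.

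The main obstacle is the closure claim in the verification step: that the $\rho$-preimages of $g$ and $h$ actually belong to the Gröbner basis for $|\mathfrak{I}|\le 6$. This requires that the reduced Gröbner basis for $m$ indices restricts to, and is generated compatibly from, the bases for subsets of indices. I would establish this from the last assertion of Lemma \ref{lem:  n<=6 case}, that initial terms use all of an element's indices, together with uniqueness of reduced Gröbner bases under the order-compatible inclusions $J^{m'}\subseteq J^{m}$. If this fails, the fallback is to redefine $\mathcal{G}^N$ as the set of all images of elements of $J^{m}$ for $m\le 6$, which keeps the argument valid at the cost of a less explicit set.
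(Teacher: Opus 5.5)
Your argument is correct in outline, and it takes a different route from the paper's. The paper replays the actual run of Buchberger's algorithm on the natural generators of $J^N_\varepsilon$ and matches every division step with a step of the $N=6$ run. This forces it to fix compatible orderings of the divisor lists and to rule out divisors carrying foreign indices. It also has to apply the three-index property to intermediate remainders, not just to the final reduced basis. You instead fix the candidate set $\mathcal{G}^N$ in advance, as the union of relabelled reduced bases for $m\le 6$. You then check the S-pair criterion in its standard-representation form, which does not depend on the order of divisors. Your route uses only the output of the $m\le 6$ computation: that reduced-basis elements involve at most three indices, and that the $t$-free part is generated by the $T^{ijk}_\varepsilon$ and $P^{ij}_\varepsilon$. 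So all the ordering bookkeeping disappears. What the paper's route gives in addition is a description of the algorithm's run itself as independent copies of six-index runs.

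The step you flag as the main obstacle needs a small repair, and a weaker statement suffices.

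\textbf{Membership is enough.} You do not need $\rho^{-1}(g)$ and $\rho^{-1}(h)$ to belong to the reduced basis of $J^{|\mathfrak I|}_{\varepsilon_\rho}$. It is enough that they lie in this ideal. Then so does their S-polynomial, which therefore has a standard representation with respect to that reduced basis, whose $\rho$-image lies in $\mathcal{G}^N$.

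\textbf{How to get membership.} Use the retraction $\kappa_{\mathfrak I}$ that fixes $t$ and the variables with index in $\mathfrak I$, and sends all other $x_i,y_i,z_i$ to $0$. It kills every generator of $J^N_\varepsilon$ involving an index outside $\mathfrak I$ and fixes the others, so $\kappa_{\mathfrak I}(J^N_\varepsilon)\subseteq\rho(J^{|\mathfrak I|}_{\varepsilon_\rho})$. Since $g,h\in J^N_\varepsilon$ are supported on $\mathfrak I$, you have $g=\kappa_{\mathfrak I}(g)$ and $h=\kappa_{\mathfrak I}(h)$, which gives the membership.

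\textbf{Your proposed justification.} Arguing via uniqueness of reduced bases under the inclusions $J^{m'}\subseteq J^m$ is missing exactly this ingredient. An inclusion of ideals only shows that the relabelled initial ideal of $J^{m'}$ is contained in the part of $\init(J^m)$ supported on the relabelled indices. Your closure claim needs the reverse containment, and that again comes from the retraction; with it, the closure claim does follow by uniqueness of normal forms. With this repair you never use the last assertion of the $N\le 6$ lemma (that initial terms use all indices).

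\textbf{Your fallback.} As stated it would not work: elements of $J^m$ with $m\le 6$ may involve six indices, so a pair could span twelve.

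\textbf{A minor point.} A reduced Gr\"obner basis need not contain the original generators, so $\mathcal{G}^N$ need not contain the generators of $J^N_\varepsilon$. However, each generator is a relabelling of an element of $J^1$ or $J^2$, hence lies in the ideal generated by $\mathcal{G}^N$, which is all you use.
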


\begin{proof}
The ideal $J_\epsilon^N=tI_{\epsilon}^x + (1-t)I_{\epsilon}^y I_{\epsilon}^z$ is generated by the polynomials $\{t(y_i-\varepsilon_iz_i),(1-t)(z_i-\varepsilon_ix_i)(x_j-\varepsilon_jy_j)\}_{i,j}$ where $1\leq i,j\leq N$. Denote this generating set by $\mathcal{G}_N^1(\varepsilon) = \{g_1,\ldots,g_{n_1}\}$, ordered as in the proof of Lemma \ref{lem:  n<=6 case}. Consider Buchberger's algorithm applied to $\mathcal{G}_N^1(\varepsilon)$. Any $S$-polynomial between $g_i$ and $g_j$ will involve no more than four indices (in fact, no more than six in the general inductive step). There is an order preserving bijection $\sigma$ between these $r$ indices and the set $\{1,\ldots, r\}$. As in Example \ref{ex: index shift}, let $\varepsilon_\sigma$ be a signature defined from $\varepsilon$ and $\sigma$, and let $\sigma(g_i)= \bar{g_i}$ be defined by applying $\sigma$ to the subscript/index of each indeterminate. Then
\[S(g_i,g_j) = \sigma^{-1}(\bar{S}(\bar{g_i},\bar{g_j}))\]

\noindent where $\bar{S}(\bar{g_i},\bar{g_j}) \in \mathcal{R}_6=\mathbb{R}[x_1,y_1,z_1,\ldots, x_6,y_6,z_6]$. Performing polynomial division with respect to $\mathcal{G}^1_6(\varepsilon_\sigma)$ allows us to write
\[\bar{S}(\bar{g_i},\bar{g_j}) = \sum_{\alpha} f_\alpha h_\alpha +r_{i,j}\]

\noindent where $h_\alpha \in \mathcal{G}^1_6(\varepsilon_\sigma)$ and $f_\alpha,r_{i,j}\in \mathcal{R}_6$. By definition of $J_\epsilon^N$, $\sigma^{-1}(h_\alpha)\in \mathcal{G}_N^1(\varepsilon)$. Then \[S(g_i,g_j) = \sum_{\alpha} \sigma^{-1}(f_\alpha)\sigma^{-1}(h_\alpha) + \sigma^{-1}(r_{i,j}).\]

We claim that this is the standard form for $S(g_i,g_j)$ with respect to $\mathcal{G}_N^1(\varepsilon)$. By definition of $<_N$, we have $\init_{<_N}(\sigma^{-1}(f)) = \sigma^{-1}(\init_{<_6}(f))$ for any $f\in \mathcal{R}_6$. Assume that for some $k$, $\init_{<_N}(g_k)$ divides some term of $\sigma^{-1}(r_{i,j})$. This would imply that $\init_{<_6}(\bar{g_k})$ divides some term of $r_{i,j}$ since $\init_{<_N}(g_k)=\init_{<_N}(\sigma^{-1}(\bar{g_k})) = \sigma^{-1}(\init_{<_6}(\bar{g_k}))$, a contradiction. So $\sigma^{-1}(r_{i,j})$ satisfies the conditions of a remainder term. 

Next, assume that $g_{A}$ is the first element in the ordered list $\mathcal{G}_N^1(\varepsilon)$ such that $\init_{<_N}(g_{A})$ divides the initial term of $S(g_i,g_j)$, but that $\sigma(g_A)$ is not the first element of $\mathcal{G}^1_6(\varepsilon_\sigma)$ whose initial term divides $\bar{S}(\bar{g_i},\bar{g_j})$. Then there is some element $g_B$ appearing before $g_A$ in $\mathcal{G}^1_6(\varepsilon_\sigma)$ where $\init_{<_6}(g_B)$ divides $\init_{<_6}(\bar{S}(\bar{g_i},\bar{g_j}))$. This contradicts the consistent ordering of $\mathcal{G}_N^1(\varepsilon)$ described in Lemma \ref{lem:  n<=6 case}. In particular, if $g_A$ uses one index, then $g_B$ uses one index, and $\sigma$ preserves the index ordering, so $g_B$ appearing before $g_A$ in $\mathcal{G}^1_6(\varepsilon_\sigma)$ implies that $\sigma^{-1}(g_B)$ appears before $\sigma^{-1}(g_A)$ in $\mathcal{G}_N^1(\varepsilon)$. A similar statement holds for more indices. 

Finally, assume that in performing the polynomial division on $S(g_i,g_j)$, there is some element $g_C$ such that $\init_{<_N}(g_C)$ divides the current polynomial, but $g_C$ uses indices other than the indices appearing in the original $S(g_i,g_j)$. In this case, it may be possible that new indices appear in the division algorithm which don't match the indices $\sigma^{-1}(\{1,2,3,4,5,6\})$ appearing in $\sigma^{-1}(\bar{S}(\bar{g_i},\bar{g_j}))$. Since $g_C$ uses at most 3 indices, there is some order preserving bijection $\pi$ from these indices to a subset of $\{1,2,3\}$. By Lemma \ref{lem:  n<=6 case}, $\pi(g_C)$ has the property that the indices of $g_C$ are the same as the indices of its initial term under $<_3$. Since $\init_{<_N}(g_C)$ divides a term with indices in $\sigma^{-1}(\{1,2,3,4,5,6\})$, all of $g_C$ must be defined in these indices too, a contradiction.

This proves that the multivariate polynomial division algorithm applied to $\mathcal{G}_N^1(\varepsilon)$ with respect to $<_N$ produces a standard form which is in bijection through $\sigma$ with the standard form coming from $\mathcal{G}_6^1(\varepsilon_\sigma)$.

To complete the first step of Buchberger's algorithm, perform this polynomial division for all pairs $i,j$ with $i<j$. Create an extended list of polynomials $\mathcal{G}_N^2(\varepsilon)$ by adjoining to the list $\mathcal{G}_N^1(\varepsilon)$ any non-zero remainder term $\sigma^{-1}(r_{i,j})$ (here $\sigma$ actually varies based on the indices, but we will abuse notation and use $\sigma$ to represent the general situation). This list should be ordered so that elements with fewer indices appear first, and $(i,j,k)$ appears before $(i',j',k')$ if $i<i'$, or $j<j'$ if $i=i'$, or $k<k'$ if both $i=i'$ and $j=j'$. In the case that there is a set of two or more elements that use the same set of indices $\{i,j,k\}$ (but possibly in different sets of variables), then order this set so that $f$ appears before $h$ if $\sigma(f)$ appears before $\sigma(h)$ in $\mathcal{G}_6^2(\varepsilon_\sigma)$. This ensures consistency with the list being produced in the $N=6$ code computation.  

By Lemma \ref{lem:  n<=6 case}, $r_{i,j}$ is defined by at most three indices, and therefore $\sigma^{-1}(r_{i,j})$ is too. Then $\mathcal{G}_N^2(\varepsilon)$ consists of polynomials which are each defined by at most three indices. This is important since all $S$-pairs will involve at most six indices, allowing the computation to be reduced to the $N=6$ case. Continuing the process, we can successively compute each $\mathcal{G}^\ell_N(\varepsilon)$, until all remainders $r_{i,j}$ are $0$ (which is guaranteed by Buchberger's algorithm). The terminal list $\mathcal{G}_N^L(\varepsilon)$ will be a Gr\"obner basis for $J_\epsilon^N$ with respect to $<_N$. The subset of elements $\mathcal{H}$ in this list which do not involve $t$ will be a Gr\"obner basis for $\mathcal{I}_{\varepsilon}$ (that is $\mathcal{H} = \mathcal{G}_N^L(\varepsilon)\cap \mathcal{R}$). 

The result now follows by Lemma \ref{lem:  n<=6 case} since for any $h\in \mathcal{H}$, $\sigma(h)$ can be written as a combination of $T^{ijk}_{\varepsilon_\sigma}$ and $P^{ij}_{\varepsilon_\sigma}$ for $N=6$, which means that $h$ can be written as a combination of $\sigma^{-1}(T^{ijk}_{\varepsilon_\sigma}) = T^{\sigma^{-1}(i)\sigma^{-1}(j)\sigma^{-1}(k)}_{\varepsilon}$ and $\sigma^{-1}(P^{ij}_{\varepsilon_\sigma})=P^{\sigma^{-1}(i)\sigma^{-1}(j)}_{\varepsilon}$, proving that the set $\mathcal{G}_\varepsilon$ is a generating set of $\mathcal{I}_{\varepsilon}$. 
\end{proof}

\begin{rem}
The proof of Theorem \ref{thm: reduce to n=6} shows that the Gr\"obner basis computation of $J_\epsilon^N$ can be reduced to a series of independent Gr\"obner basis computations for a fixed set of at most six indices $\mathcal{J}$ and fixed signature $\varepsilon_\sigma$. The entire argument works because for a fixed $\mathcal{J}$, the $S$-pair computations occurring between the polynomials of $\mathcal{G}^\ell_N$ with indices in $\mathcal{J}$ are in bijection with the $S$-pair computations occurring in $\mathcal{G}_6^\ell(\varepsilon_\sigma)$. This reduces the entire Gr\"obner basis computation to repeated independent copies of the Gr\"obner basis computation of $J_\epsilon^6$ with varying signatures $\varepsilon$. Therefore, Theorem \ref{thm: reduce to n=6} is a careful bookkeeping argument to show this reduction. 
\end{rem}

As a final note, for the same reasons as described in the previous remark, finding a Gr\"obner basis of $\mathcal{I}_{\varepsilon}$ can be reduced to finding a Gr\"obner basis for the $N=6$ case. We investigate this question in Appendix \ref{app: alternate}. Although we did not need an explicit Gr\"obner basis to provide a generating set of $\mathcal{I}_{\varepsilon}$, we did briefly explore this question, and with respect to $<_N$ there were additional polynomials besides $T^{ijk}_{\varepsilon}$ and $P^{ij}_{\varepsilon}$ that were needed in the Gr\"obner basis. For the signature with all $-1$'s, these elements could be viewed as the determinant of a certain $3\times 3$ matrix. Unfortunately we could not give a general description for other signatures, although the extra elements appeared to be determinantal in nature. This could be the topic of future research, especially if Gr\"obner degeneration techniques prove useful in studying tensorial constraints in generalized geometry. 

\section{Uniform Integrability}\label{sec:5}
\noindent Given a commuting family $\phi$, we can now consider the set $\mathcal{Z}(\phi)$ of all $P\in \mathcal{R}$ such that $P\bullet_{\phi}\tau_C$ vanishes identically.
It is straightforward to verify that $\mathcal{Z}(\phi)$ is an ideal of $\mathcal{R}$ contained in $\mathcal{I}(\phi)$. We will say that $\phi$ is \emph{uniformly integrable} if $\mathcal{I}_{\varepsilon}\subseteq \mathcal{Z}(\phi)$.

\subsection{Semisimple families}
Following \cite{AldiGrandini22}, we say that an endomorphism $\phi$ of the generalized tangent bundle is {\it semisimple} if there is a finite set $\Sigma$ of complex numbers and a decomposition $\T M\otimes \mathbb C\cong \bigoplus_{\lambda\in \Sigma}L_\lambda$ into eigenbundles $L_\lambda=\ker(\phi-\lambda I)$ of (the complexification of) $\phi$. We will call a commuting family $\phi=(\phi_1,\ldots,\phi_N)$ \emph{semisimple} if each endomorphism $\phi_i$ is semisimple. Note that for every semisimple commuting family, the complexified generalized tangent bundle $\T M\otimes \mathbb C$ decomposes into the direct sum of subbundles of the form
\[
L_{\mathbf \lambda}=\bigcap_{i=1}^N \ker(\phi_i-\lambda_iI)\,,
\]
labeled by vectors $\mathbf \lambda=(\lambda_1,\ldots,\lambda_n)$. 
The vectors $\lambda$ will be called the eigenvalues of the family $\phi$. It is straightforward to prove that $\T M\otimes \C$ splits as a direct sum of the subbundles $L_{\lambda}$.

\begin{example}
Let $N=1$ so that $\phi=(\phi_1)$ and assume $\phi_1$ is skew-symmetric and semisimple. Then $\phi$ is uniformly integrable if and only if $\mathcal S_\phi=0$ if and only if
\begin{equation}\label{eq:14}
(\lambda_1+\mu_1)\lbra L_\lambda,L_\mu\rbra\subseteq L_\lambda+L_\mu\,,
\end{equation} 
where the last equivalence follows from \cite{AldiGrandini22}. In particular, a generalized almost complex structure is uniformly integrable if and only if it is a generalized complex structure if and only if its two eigenbundles $L_{\pm \sqrt{-1}}$ are both involutive with respect to the Courant-Dorfman bracket.
\end{example}

The following Theorem generalizes the equivalent condition \eqref{eq:14} to arbitrary commuting families of semisimple endomorphisms of the generalized tangent bundle.

\begin{theorem}\label{prop:ssUnifInt}
A semisimple commuting family $\phi$ is uniformly integrable if and only if for all $\lambda,\mu\in \C^N$, 
\begin{enumerate}[1)]
\item 
$\displaystyle\|\lambda-{\rm diag}(\varepsilon)(\mu)\|\cdot\lbra\Gamma(L_{\lambda}),\Gamma(L_{\mu})\rbra\subseteq \Gamma(L_{\lambda} + L_{\mu}),$ and, 
\item $\displaystyle\lbra\Gamma(L_{\lambda}),\Gamma(L_{\mu})\rbra\subseteq \Gamma\left(L_{\lambda}\circledast L_{\mu}\right)$ 
\end{enumerate}
where $L_{\lambda}\circledast L_{\mu}$ denotes the Whitney sum of all eigenbundles $L_{\xi}$ such that
$$\det\left(\begin{array}{ccc}\lambda_i&\lambda_j&1\\
     \mu_i&\mu_j&1\\
     \xi_i&\xi_j&1\end{array}\right)=0$$
      for all indices $i,j$ such that $\epsilon_i=1=\epsilon_j$.

\end{theorem}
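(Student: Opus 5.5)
By Theorem \ref{thm: reduce to n=6}, $\mathcal I_\varepsilon$ is generated by the polynomials $T^{ijk}_\varepsilon$ and $P^{ij}$. Since $\mathcal Z(\phi)$ is an ideal, uniform integrability is therefore equivalent to
\[
T^{ijk}_\varepsilon\bullet_\phi\tau_C=0 \quad\text{for all } i,j,k, \qquad P^{ij}\bullet_\phi\tau_C=0 \quad\text{whenever } \varepsilon_i=\varepsilon_j=1.
\]
Indeed, if $Q=\sum A\,T+\sum B\,P$, then $Q\bullet_\phi\tau_C=\sum A\bullet_\phi(T\bullet_\phi\tau_C)+\dots$, using that the action is a ring action. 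I will translate these finitely many vanishing conditions into eigenvalue conditions.

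\textbf{Reduction to eigensections.} Each of these trilinear forms is $\Omega^0_M$-linear (they lie in $\mathcal I_\varepsilon$) and extends complex-linearly to $\T M\otimes\C=\bigoplus L_\lambda$. Hence it vanishes identically if and only if it vanishes on triples $\a\in\Gamma(L_\lambda)$, $\b\in\Gamma(L_\mu)$, $\c\in\Gamma(L_\nu)$. On such a triple the polynomial action gives
\[
(Q\bullet_\phi\tau_C)(\a,\b,\c)=Q(\lambda,\mu,\nu)\,\langle\lbra\a,\b\rbra,\c\rangle .
\]

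\textbf{Pairing the eigenbundles.} For $\mathbf d\in L_\xi$ and $\c\in L_\nu$, the relation $\phi_i^*=\varepsilon_i\phi_i$ gives $(\xi_i-\varepsilon_i\nu_i)\langle\mathbf d,\c\rangle=0$. So $L_\xi$ pairs only with $L_{{\rm diag}(\varepsilon)\xi}$. Since the complexified pairing is nondegenerate, that pairing is perfect. It follows that $\langle\lbra\a,\b\rbra,\c\rangle$ for $\c\in L_{{\rm diag}(\varepsilon)\xi}$ detects exactly the $L_\xi$-component ${\rm pr}_\xi\lbra\a,\b\rbra$.

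Thus uniform integrability becomes: for all $\lambda,\mu,\xi$ and every generator $Q$,
\[
Q(\lambda,\mu,{\rm diag}(\varepsilon)\xi)\cdot{\rm pr}_\xi\lbra\Gamma(L_\lambda),\Gamma(L_\mu)\rbra=0 .
\]

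\textbf{Evaluating the generators.} Using $\varepsilon_k^2=1$,
\[
T^{ijk}_\varepsilon(\lambda,\mu,{\rm diag}(\varepsilon)\xi)=(\lambda_i-\varepsilon_i\mu_i)(\mu_j-\xi_j)(\xi_k-\lambda_k).
\]
Letting $i,j,k$ range independently, all of these vanish if and only if $\lambda={\rm diag}(\varepsilon)\mu$, or $\xi=\mu$, or $\xi=\lambda$. Hence the $T$-conditions say that when $\lambda\neq{\rm diag}(\varepsilon)\mu$, the bracket has components only in $L_\lambda+L_\mu$; this is condition 1), the norm factor simply encoding the case split.

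For $\varepsilon_i=\varepsilon_j=1$ a direct expansion gives
\[
P^{ij}(\lambda,\mu,\xi)=(\xi_i-\lambda_i)(\lambda_j-\mu_j)-(\xi_j-\lambda_j)(\lambda_i-\mu_i),
\]
which is, up to sign, the $3\times3$ determinant in the statement. So the $P$-conditions say precisely that ${\rm pr}_\xi\lbra\a,\b\rbra=0$ unless $L_\xi\subseteq L_\lambda\circledast L_\mu$, which is condition 2). Both implications follow, since every step above is an equivalence.

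\textbf{Main obstacle.} The delicate point is justifying the reduction to eigensections and the perfect pairing of $L_\xi$ with $L_{{\rm diag}(\varepsilon)\xi}$, so that vanishing against all $\c$ isolates each component ${\rm pr}_\xi$. The sign bookkeeping in $T^{ijk}_\varepsilon$ after substituting $\nu={\rm diag}(\varepsilon)\xi$ also needs care. Everything else is evaluation of polynomials.
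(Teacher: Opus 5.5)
Your proof is correct, but it is not the route of the paper's main proof. It is a worked-out version of the alternate argument the authors sketch in the Remark right after the theorem: evaluate $P\bullet_\phi\tau_C$ on triples of eigensections, then use that $L_\xi$ pairs only with $L_{{\rm diag}(\varepsilon)\xi}$.

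\textbf{The paper's main proof.} It works with the vector-valued tensors instead. It rewrites $\mathcal T^{ijk}_\phi$ through the semiconcomitant identity \eqref{eq:10}. For $\a\in\Gamma(L_\lambda)$, $\b\in\Gamma(L_\mu)$ one has $\mathcal K_{(\phi_k,\phi_j)}(\a,\b)=(\lambda_k-\phi_k)(\mu_j-\phi_j)\lbra\a,\b\rbra$, and hence $\mathcal T^{ijk}_\phi(\a,\b)=-\varepsilon_k(\lambda_i-\varepsilon_i\mu_i)(\lambda_k-\phi_k)(\mu_j-\phi_j)\lbra\a,\b\rbra$; $\mathcal P^{ij}_\phi$ is handled the same way. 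Conditions 1) and 2) are then read off by letting these operator polynomials act on the eigen-decomposition of the bracket.

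\textbf{What each route buys.} The paper's route needs no third argument and no orthogonality lemma, and it connects directly to the single-endomorphism criterion for the shifted torsion. Yours needs the perfect pairing $L_\xi\times L_{{\rm diag}(\varepsilon)\xi}\to\C$, which you justify correctly. In exchange, every generator is handled by the same mechanism: evaluate $Q$ at $(\lambda,\mu,{\rm diag}(\varepsilon)\xi)$. The argument also applies verbatim to any subset of $\mathcal I(\phi)$. It also makes explicit that letting $i,j,k$ range \emph{independently} is what forces $\xi\in\{\lambda,\mu\}$ for the joint eigenbundles. The diagonal torsions $\mathcal T^{iii}_\phi$ alone only give $\xi_i\in\{\lambda_i,\mu_i\}$ coordinatewise, i.e.\ containment in sums of eigenbundles of the individual $\phi_i$.

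\textbf{One harmless slip.} The correct value is $T^{ijk}_\varepsilon(\lambda,\mu,{\rm diag}(\varepsilon)\xi)=\varepsilon_k(\lambda_i-\varepsilon_i\mu_i)(\mu_j-\xi_j)(\xi_k-\lambda_k)$. It is $\varepsilon_j^2=1$ that simplifies the middle factor, and the sign $\varepsilon_k$ survives. This does not affect any vanishing statement.
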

\begin{proof}
If $\phi$ is uniformly integrable, then by Theorem \ref{thm: reduce to n=6} all torsions $\mathcal T_\phi^{ijk}$ vanish. In particular, by \cite{AldiGrandini22}, the vanishing of each $\mathcal T_\phi^{iii}=-\mathcal S_{\phi_i}$ implies that
\begin{equation}
(\lambda_i-\varepsilon_i\mu_i)\lbra\Gamma(L_\lambda),\Gamma(L_\mu)\rbra\subseteq L_\lambda+L_\mu
\end{equation}
for every $i\in \{1,\ldots,N\}$. This proves 1). On the other hand, if $\a\in \Gamma(L_\lambda)$, $\b\in \Gamma(L_\mu)$, then
\begin{equation}
\mathcal K_{(\phi_k,\phi_j)}(\a,\b)=(\lambda_k-\phi_k)(\lambda_j-\phi_j)\lbra \a,\b\rbra\,.
\end{equation}
Substituting this into \eqref{eq:10} we obtain that
\begin{align*}
\mathcal T_\phi^{ijk}(\a,\b) &= -\varepsilon_k(\lambda_i-\varepsilon_i\mu_i)\mathcal K_{(\phi_k,\phi_j)}(\a,\b)\\
&=-\varepsilon_k(\lambda_i-\varepsilon_i\mu_i)(\lambda_k-\phi_k)(\lambda_j-\phi_j)\lbra \a,\b\rbra
\end{align*} 
vanishes if 1) holds. Hence, 1) is equivalent to the vanishing of all $\mathcal T^{ijk}_\phi$. Similarly, whenever $i,j$ are such that $\varepsilon_i=1=\varepsilon_j$,
\begin{align*}
\mathcal P^{ij}_\phi(\a,\b) & = \mathcal K_{(\phi_i,\phi_j)}(\a,\b)-\mathcal K_{(\phi_j,\phi_i)}(\a,\b)\\
&=((\lambda_i-\phi_i)(\mu_j-\phi_j)-(\lambda_j-\phi_j)(\mu_i-\phi_i))\lbra \a,\b\rbra\,,
\end{align*}
which is readily seen to vanish if and only if 2) holds.

\end{proof}
\begin{rem} 
An alternate proof of Theorem \ref{prop:ssUnifInt} can be obtained using the following argument. If $\phi$ is a semisimple commuting family and $P\in \mathcal{I}(\phi)$, then $P\in \mathcal{Z}(\phi)$ if and only if for all triples of eigenvalues $\lambda,\mu,\xi$ we have
$$P(\lambda,\mu,\xi)\langle\lbra\Gamma(L_{\lambda}),\Gamma(L_{\mu})\rbra, \Gamma(L_{\xi})\rangle=0.$$
This is equivalent to saying that $P\in \mathcal{Z}(\phi)$ if and only if for all pairs of eigenvalues $\lambda,\mu$ we have
$$\lbra\Gamma(L_{\lambda}),\Gamma(L_{\mu})\rbra\subseteq \Gamma\left(\bigcap_{P(\lambda,\mu,\xi)\neq 0}L_{\xi}^{\bot}\right)=\Gamma\left(\left(\bigoplus_{P(\lambda,\mu,\xi)\neq 0}L_{\xi}\right)^{\bot}\right)=\Gamma\left(\bigoplus_{P(\lambda,\mu,\xi)= 0}L_{{\rm diag}(\varepsilon)\xi}\right).$$
More generally, a subset $S\subseteq \mathcal{I}(\phi)$ is also included in $\mathcal{Z}(\phi)$ if and only if, for all pairs of eigenvalues $\lambda,\mu$, the bracket $\lbra\Gamma(L_{\lambda}),\Gamma(L_{\mu})\rbra $ is contained in the space of sections of the Whitney sum of all eigenbundles $L_{{\rm diag}(\epsilon)\xi}$ such that $P(\lambda,\mu,\xi)=0$ for all $P\in S$. 
\end{rem}

If $\phi=(\phi_1)$ with $\phi_1$ symmetric, then we know from Example \ref{ex:18} that $\phi$ is uniformly integrable if and only if the shifted Courant-Nijenhuis torsion of $\phi_1$ vanishes. Under the additional assumption that $\phi_1$ is semisimple, this is reflected in condition 2) of Theorem \ref{prop:ssUnifInt} being trivially satisfied. Hence, in this case uniform integrability is equivalent to
\begin{equation}\label{eq:17}
(\lambda_1-\mu_1)\lbra L_\lambda,L_\mu\rbra \subseteq L_\lambda+L_\mu\,.
\end{equation}
It follows from \eqref{eq:17} that any symmmetric endomorphism of the generalized tangent bundle such that $\phi_1^2=w$ for some non-negative scalar $w$ is uniformly integrable. We now explore some special cases in the next series of examples.

\begin{example}
Let $\phi=(\phi_1,\phi_2)$ be such that $\phi_1$ and $\phi_2$ are generalized almost complex structures. We know from Example \ref{ex:16} that $\phi$ is uniformly integrable if and only if $\phi_1$ and $\phi_2$ are generalized complex structures. Since $\phi$ is semisimple and
\begin{equation}
\mathbb T M\otimes\mathbb C\cong L_{\sqrt{-1},\sqrt{-1}}\oplus L_{\sqrt{-1},-\sqrt{-1}}\oplus L_{-\sqrt{-1},\sqrt{-1}}\oplus L_{-\sqrt{-1},-\sqrt{-1}}\,,
\end{equation}
Theorem \ref{prop:ssUnifInt} recovers the well known observation that a commuting pair of generalized almost complex structures is integrable if and only if the $\pm \sqrt{-1}$-eigenbundles of both $\phi_1$ and $\phi_2$ are involutive with respect to the Courant-Dorfman bracket \cite{Gualtieri14}. 

Equivalently, a (not necessarily positive definite) generalized K\"ahler structure can be encoded as a commuting pair $(\phi_1,\phi_2)$ where $\phi_1$ is generalized complex and  $\phi_2$ is a generalized Riemannian metric. The corresponding decomposition of the complexified generalized tangent bundle becomes
\begin{equation}
\mathbb T M\otimes\mathbb C\cong L_{\sqrt{-1},1}\oplus L_{\sqrt{-1},-1}\oplus L_{-\sqrt{-1},1}\oplus L_{-\sqrt{-1},-1}\,.
\end{equation}
By Theorem  \ref{prop:ssUnifInt} the corresponding involutivity conditions are
\begin{equation}\label{eq:16}
\lbra L_{\sqrt{-1},\pm 1}, L_{\sqrt{-1},\pm 1} \rbra\subseteq L_{\sqrt{-1},\pm 1}\,.
\end{equation}
\end{example}

\begin{example}
More generally, let $\phi=(\phi_1,\phi_2)$ be a metric $F$-structure in the sense of Vaisman \cite{Vaisman08} so that in particular $\phi_1^3+\phi_1=0$ and $\mathcal \phi_2^2=1$. Then $\phi$ is generalized CRFK \cite{Vaisman08} if and only if \eqref{eq:16} holds and 
\begin{equation}\label{eq:23}
\lbra L_{\sqrt{-1},\pm 1},L_{0,\pm 1}\rbra \subseteq L_{\sqrt{-1},\pm 1}\oplus L_{0,\pm 1}\,.
\end{equation}
By Theorem \ref{prop:ssUnifInt}, \eqref{eq:23} holds if and only if $\phi$ is uniformly integrable.
\end{example}

\subsection{Commuting pairs of generalized metrics}
Thus far, we have studied the uniform integrability condition in the case of semisimple families. We will now specialize the discussion to pairs of metrics. We will first start with an example. 
\begin{example}
Let $\phi=(\phi_1,\phi_2)$ be a commuting pair of generalized metric structures. Then
\begin{equation}
\mathbb T M\otimes\mathbb C\cong L_{1,1}\oplus L_{1,-1}\oplus L_{-1,1}\oplus L_{-1,-1}
\end{equation}
and, by Proposition \eqref{prop:ssUnifInt}, $\phi$ is uniformly integrable if and only if 
\begin{equation}
\lbra L_{1,\lambda_2},L_{-1,\mu_2}\rbra \subseteq L_{1,\lambda_2}\oplus L_{-1,\mu_2}
\end{equation}
and 
\begin{equation}
\lbra L_{\lambda_1, 1},L_{\mu_1,-1 }\rbra \subseteq L_{\lambda_1, 1}\oplus L_{\mu_1,-1}\,.
\end{equation}
for any $\lambda_1,\lambda_2,\mu_1,\mu_2\in \{-1,1\}$
\end{example}

\begin{prop}\label{prop:34} Let $\phi_1,\phi_2$ be commuting, positive definite generalized Riemannian metrics. Then $\phi_1=\phi_2$.
\end{prop}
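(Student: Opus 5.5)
Recall that a generalized Riemannian metric is a symmetric endomorphism $\mathcal G$ of $\T M$ with $\mathcal G^2=1$, and positive definite means that the bilinear form $\langle \mathcal G\cdot,\cdot\rangle$ is positive definite. Equivalently, $\mathcal G$ is determined by its $+1$-eigenbundle $V_+=\ker(\mathcal G-1)$. This subbundle is maximal positive definite for the tautological inner product, which has split signature $(n,n)$ with $n=\dim M$. Its orthogonal complement $V_-=\ker(\mathcal G+1)$ is maximal negative definite. The plan is to run the argument pointwise in linear algebra: fix $p\in M$ and work on the $2n$-dimensional space $W=\T_pM$ with its split inner product.

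First, use commutativity to decompose $W$. Since $\phi_1,\phi_2$ are commuting involutions, they are simultaneously diagonalizable with real eigenvalues $\pm1$. This gives the decomposition
\[
W=L_{1,1}\oplus L_{1,-1}\oplus L_{-1,1}\oplus L_{-1,-1},
\]
as in the preceding example. Because both $\phi_i$ are symmetric, eigenspaces with different joint eigenvalues are orthogonal. Indeed, if $\phi_i\a=\lambda_i\a$ and $\phi_i\b=\mu_i\b$ with $\lambda_i\neq\mu_i$, then $\lambda_i\langle\a,\b\rangle=\langle\phi_i\a,\b\rangle=\langle\a,\phi_i\b\rangle=\mu_i\langle\a,\b\rangle$, which forces $\langle\a,\b\rangle=0$.

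Next, read off signs. The $+1$-eigenbundle of $\phi_1$ is $L_{1,1}\oplus L_{1,-1}$, and positive definiteness of $\phi_1$ means $\langle\cdot,\cdot\rangle$ is positive definite there. In particular it is positive definite on $L_{1,-1}$. Now apply the same reasoning to $\phi_2$: its $-1$-eigenbundle $L_{1,-1}\oplus L_{-1,-1}$ is negative definite, since for $\a$ in it we have $0<\langle\phi_2\a,\a\rangle=-\langle\a,\a\rangle$ whenever $\a\neq0$. So $L_{1,-1}$ is both positive definite and negative definite, and hence $L_{1,-1}=0$. By the symmetric argument, $L_{-1,1}=0$. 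Then $W=L_{1,1}\oplus L_{-1,-1}$, on which $\phi_1$ and $\phi_2$ act identically. This gives $\phi_1=\phi_2$ pointwise, and therefore everywhere.

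No step looks like a genuine obstacle. The only thing to check carefully is the convention for "positive definite generalized metric", namely that the form is $\langle\phi\cdot,\cdot\rangle$ and not something else. Under any of the standard equivalent conventions, the key fact is the same: a positive definite metric's $+1$-eigenbundle is positive definite and its $-1$-eigenbundle is negative definite. Smoothness plays no role beyond the pointwise statement.
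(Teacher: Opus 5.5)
Your argument is correct, and it takes a genuinely different route from the paper.

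The paper works in local coordinates. It uses Gualtieri's normal form $e^{\omega}\psi e^{-\omega}$ and a $B$-field transformation to reduce to the case where $\phi_2$ is classical. It then writes $\phi_1,\phi_2$ as block matrices in terms of $g_1,g_2,\omega$ and extracts two matrix identities from $\phi_1\phi_2=\phi_2\phi_1$. Conjugating by square roots of the positive definite matrices gives $RR^T=(RR^T)^{-1}-SS^T$ and $SS^T+RS^TSR^{-1}=0$. A trace argument then forces $S=0$, hence $\omega=0$, and $RR^T=1$, hence $g_1=g_2$.

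You instead argue pointwise and coordinate-free. Commuting involutions split $\T_pM$ into joint eigenspaces. A nonzero $\a$ in a mixed eigenspace $L_{1,-1}$ or $L_{-1,1}$ would have $\langle\a,\a\rangle$ positive by one metric and negative by the other, so these spaces vanish and $\phi_1=\phi_2$ on what remains.

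What your route buys:
\begin{itemize}
\item It is shorter, and it needs no structure theorem for generalized metrics, no square roots and no trace trick.
\item It uses only $\phi_i^2=1$, symmetry, and positivity of $\langle\phi_i\cdot,\cdot\rangle$. The orthogonality of joint eigenspaces that you verify is never actually used, so you can drop it.
\item It shows exactly where positivity enters. In the indefinite classical case treated right after the proposition, the mixed eigenbundles $L_{+-},L_{-+}$ no longer have to vanish, and they are what carries the structure $E_\pm$ there.
\end{itemize}

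What the paper's route buys is the explicit block form of the commutation conditions in terms of $(g_1,g_2,\omega)$. It reuses this immediately to identify commuting pairs of indefinite classical metrics with metric almost product structures $(g_1,T)$.
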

\begin{proof} Any positive definite generalized Riemannian metric can be written \cite{Gualtieri11} in the form $e^{\omega}\psi e^{-\omega}$, for some classical, positive definite Riemannian metric $\psi$ and $B$-field $\omega$. Then, up to an overall $B$-field transformation, we can assume $\phi_2$ to be classical. Hence we have the matrix representation (with respect to the usual splitting $\T M=TM\oplus T^*M$)
$$\phi_1=\left[\begin{array}{cc}-g_1^{-1}\omega&g_1^{-1}\\g_1-\omega g_1^{-1}\omega&\omega g_1^{-1}\end{array}\right], \qquad \phi_2=\left[\begin{array}{cc}0&g_2^{-1}\\g_2&0\end{array}\right]$$
where $g_1,g_2$ are classical, positive definite Riemannian metrics and $\omega$ is a two-form, viewed as maps $g_i,\omega:TM\rightarrow T^*M$. Then, $\phi_1\phi_2=\phi_2\phi_1$ if and only if
$$\left[\begin{array}{cc}g_1^{-1}g_2&-g_1^{-1}\omega g_2^{-1}\\
\omega g_1^{-1}g_2&g_1g_2^{-1}-\omega g_1^{-1}\omega g_2^{-1}\end{array}\right]=\left[\begin{array}{cc}g_2^{-1}g_1-g_2^{-1}\omega g_1^{-1}\omega&g_2^{-1}\omega g_1^{-1}\\-g_2g_1^{-1}\omega&g_2g_1^{-1}\end{array}\right]$$
and from the $(2,2)$ entries and the $(1,2)$ entries left-multiplied by $\omega$ we obtain the conditions
\begin{eqnarray}\label{eq: comm}g_1g_2^{-1}&=&g_2g_1^{-1}+\omega g_1^{-1}\omega g_2^{-1},\nonumber\\\\
\omega g_1^{-1}\omega g_2^{-1}&=&-\omega g_2^{-1}\omega g_1^{-1}.\nonumber\end{eqnarray}
Now, fix a coordinate chart, and let $A_1,A_2,B$ be the local matrix representations of $g_1, g_2, \omega$ respectively. Then, the equations (\ref{eq: comm})  are expressed locally as
\begin{equation}\label{eq:28}
A_1A_2^{-1}=A_2A_1^{-1}+BA_1^{-1}BA_2^{-1},\qquad BA_1^{-1}BA_2^{-1}+BA_2^{-1}BA_1^{-1}=0\,.
\end{equation}
Since $A_1, A_2$ are symmetric and positive definite, we can consider their square roots. Multiplying the equations \eqref{eq:28} on the left by $A_2^{-1/2}$  and on the right by $A_2^{1/2}$ yields the equations
$$\left(A_2^{-1/2}A_1^{1/2}\right)\left(A_1^{1/2}A_2^{-1/2}\right)=\left(A_2^{1/2}A_1^{-1/2}\right)\left(A_1^{-1/2}A_2^{1/2}\right)+\left(A_2^{-1/2}BA_1^{-1/2}\right)\left(A_1^{-1/2}BA_2^{-1/2}\right)$$
and
$$\left(A_2^{-1/2}BA_1^{-1/2}\right)\left(A_1^{-1/2}BA_2^{-1/2}\right)+\left(A_2^{-1/2}A_1^{1/2}\right)\left(A_1^{-1/2}BA_2^{-1/2}\right)\left(A_2^{-1/2}BA_1^{-1/2}\right)\left(A_1^{-1/2}A_2^{1/2}\right)=0$$
which can be expressed in compact form as
$$RR^{T}=(RR^T)^{-1}-SS^T, \qquad SS^T+RS^TSR^{-1}=0$$
where
$$R=A_2^{-1/2}A_1^{1/2}, \qquad S=A_2^{-1/2}BA_{1}^{-1/2}\,.$$

Now, taking the trace of the second equation yields ${\rm tr}(SS^T)=0$.  However, since $SS^T$ must be positive semidefinite we conclude $SS^T=0$, i.e. $S=0$, which in turn implies $\omega=0$. Moreover, $(RR^T)^2=1$, which implies $RR^T=1$ since $RR^T$ is positive definite. This in turn implies $A_1=A_2$, i.e.\ $g_1=g_2$.
\end{proof}

Let us now consider the case of a commuting pair $\phi$ when $\phi_1,\phi_2$ are both classical, not necessarily positive definite, metrics. Using the notation above, we have $\omega=0$, and
$$g_1^{-1}g_2=g_2^{-1}g_1:=T$$
where $T:TM\rightarrow TM$ is an involution such that
$$g_1(TX,TY)=g_1(X,Y)$$
for all $X,Y\in \Gamma(TM)$. Therefore, the data $(\phi_1, \phi_2)$ is equivalent to $(g_1,T)$, a \emph{metric almost product structure} or, equivalently, a splitting (orthogonal with respect to $g_1$)
$$TM=E_+\oplus E_-$$
where $E_{\pm}$ is the $\pm 1$-eigenbundle of $T$.

\begin{prop} Let $\phi=(\phi_1, \phi_2)$ and $g_1$ be as in Proposition \ref{prop:34}. Then $\phi$ is uniformly integrable if and only if the following conditions are satisfied:
\begin{enumerate}[i)]
\item $[\Gamma(E_+),\Gamma(E_+)]\subseteq\Gamma(E_+)$ and $[\Gamma(E_-),\Gamma(E_-)]\subseteq\Gamma(E_-)$, and,
\item for all $X\in \Gamma(E_+)$ and all $Y\in \Gamma(E_-)$ we have
$$\left.\left(\mathcal{L}_{X}g_1\right)\right|_{E_-\otimes E_-}=0\qquad \mbox{ and } \qquad\left.\left(\mathcal{L}_{Y}g_1\right)\right|_{E_+\otimes E_+}=0.$$
\end{enumerate}
\end{prop}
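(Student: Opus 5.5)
By the example immediately preceding the statement on commuting pairs of generalized metrics, $\phi$ is uniformly integrable if and only if the common eigenbundles $L_{\lambda}$, $\lambda\in\{\pm1\}^2$, satisfy certain bracket conditions. My plan is to make these eigenbundles explicit in terms of $g_1$ and the splitting $TM=E_+\oplus E_-$, and then translate each bracket condition into a statement about $[\cdot,\cdot]$ and $\mathcal L g_1$.

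\textbf{Step 1: the eigenbundles and the bracket conditions.} With $\omega=0$ we have $\phi_1(X+\xi)=g_1^{-1}\xi+g_1X$ and $\phi_2(X+\xi)=g_2^{-1}\xi+g_2X$, where $g_2=g_1T$. A direct check gives
\[
L_{a,b}=\{X+a\,g_1X:\ X\in E_{ab}\},\qquad a,b\in\{\pm1\}.
\]
These bundles are real, so the complexification plays no role. Next I specialize Theorem \ref{prop:ssUnifInt} to this case.
\begin{itemize}
\item For $\lambda=\mu$ both conditions of Theorem \ref{prop:ssUnifInt} are empty: the norm factor in 1) vanishes, and the determinant in 2) has two equal rows.
\item For $\lambda\neq\mu$, condition 1) requires $\lbra\Gamma(L_\lambda),\Gamma(L_\mu)\rbra\subseteq\Gamma(L_\lambda\oplus L_\mu)$. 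Condition 2) adds nothing further: the points of $\{\pm1\}^2$ are the vertices of a square, so no third vertex lies on the line through $\lambda$ and $\mu$.
\end{itemize}
Hence uniform integrability is equivalent to $\lbra\Gamma(L_\lambda),\Gamma(L_\mu)\rbra\subseteq\Gamma(L_\lambda\oplus L_\mu)$ for all $\lambda\neq\mu$.

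\textbf{Step 2: reduction to a pairing.} Since $\phi_1,\phi_2$ are symmetric, eigenbundles for distinct eigenvalues are orthogonal. Therefore $L_\lambda\oplus L_\mu=(L_\xi\oplus L_{\xi'})^\perp$, where $\xi,\xi'$ are the two remaining eigenvalues. The condition thus becomes
\[
\tau_C(X+ag_1X,\;Y+a'g_1Y,\;Z+a''g_1Z)=0
\]
for sections in the corresponding $E$'s. I will expand this using the Courant-Dorfman bracket. The result is a combination of the following terms:
\[
g_1([X,Y],Z),\qquad (\mathcal L_Xg_1)(Y,Z),\qquad (\mathcal L_Yg_1)(X,Z),\qquad (\mathcal L_Zg_1)(X,Y),
\]
with coefficients that are polynomials in the signs $a,a',a''$. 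The expansion is a Koszul-type formula, using $\langle Z+a''g_1Z,\cdot\rangle$ and $i_Yd(g_1X)=\mathcal L_Y(g_1X)-d(g_1(X,Y))$.

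\textbf{Step 3: case analysis.} I sort by the position of $X,Y,Z$ with respect to $E_\pm$.
\begin{itemize}
\item If $\lambda=(a,b)$ and $\mu=(-a,-b)$, then $X,Y$ lie in the same $E_s$ (with $s=ab$) and $Z$ lies in $E_{-s}$. The two admissible choices of $\xi$ differ only in the sign $a''$, so adding and subtracting the two resulting equations should separate $g_1([X,Y],Z)$ from the Lie-derivative terms. This should yield the involutivity condition i), together with $(\mathcal L_Zg_1)|_{E_s\otimes E_s}=0$.
\item The pairs with $\lambda,\mu$ adjacent have $X,Y$ in opposite $E$'s, and should yield the remaining part of ii).
\end{itemize}
Conversely, I will check that i) and ii) kill every coefficient combination that appears. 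One ingredient is that $g_1$-orthogonality of $E_+$ and $E_-$ gives $g_1(X,Z)=0$ for mixed arguments, so derivatives of such functions vanish.

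\textbf{Main obstacle.} I expect the hard part to be the bookkeeping in Step 3. The expansion of $\tau_C$ must be done carefully enough that, for each configuration, the available sign choices yield a linear system whose solution isolates the bracket term and each Lie-derivative term. I also need to confirm that no condition beyond i) and ii) survives; in particular, conditions of the form $(\mathcal L_Xg_1)(Y,Z)$ with $Y,Z$ in opposite bundles must be shown to follow from i) and orthogonality. My first attempt will be to use $g_1(Y,Z)=0$, which gives $(\mathcal L_Xg_1)(Y,Z)=-g_1([X,Y],Z)-g_1(Y,[X,Z])$, and then apply i).
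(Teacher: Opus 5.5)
Your proposal is correct and is essentially the paper's proof. The paper also writes $L_{a,b}=\{X+a\,g_1X : X\in E_{ab}\}$, reduces via Theorem~\ref{prop:ssUnifInt} to $\lbra L_\lambda,L_\mu\rbra\subseteq L_\lambda\oplus L_\mu$ for $\lambda\neq\mu$, and expands the Courant--Dorfman bracket case by case; it differs only in bookkeeping, cutting the six pairs to four with the bracket-preserving anti-isometry $X+\alpha\mapsto X-\alpha$ and testing membership in $E_\pm\oplus g_1(E_\pm)$, $\Gamma_{g_1}$, $\Gamma_{g_2}$ directly, where you pair against the complementary eigenbundles.

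One correction to your Step 3 expectations: the two antipodal pairs (with $s=\pm$) already give all of i) and ii), so there is no "remaining part" for the adjacent pairs. In the adjacent cases the mixed Lie-derivative terms do not vanish on their own; they occur only in combinations such as $(\mathcal L_Xg_1)(Y,Z)+g_1([X,Y],Z)=-g_1(Y,[X,Z])$, which i) kills. So those cases are needed only for sufficiency, and you should not expect the sign system to isolate every term there.
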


\begin{proof}
The four eigenbundles of $\phi$ are
$$L_{++}=\{X+g_1(X)\,|\,X\in E_+\}, \qquad L_{+-}=\{X+g_1(X)\,|\,X\in E_-\},$$
$$L_{-+}=\{X-g_1(X)\,|\,X\in E_-\}, \qquad L_{--}=\{X-g_1(X)\,|\,X\in E_+\}$$
so that $\T M=L_{++}\oplus L_{+-}\oplus L_{-+}\oplus L_{--}$. For example, $X\in E_-$ implies $g_1(X)=-g_2(X)$ and thus
$$\phi_1(X-g_1(X))=g_1(X)-g_1^{-1}g_1(X)=g_1(X)-X=-(X-g_1(X))$$
and 
$$\phi_2(X-g_1(X))=g_2(X)-g_2^{-1}g_1(X)=g_2(X)+X=X-g_1(X)$$
so that $X-g_1(X)\in L_{-+}$. Note that $\sigma(L_{++})=L_{--}$ and $\sigma(L_{+-})=L_{-+}$ where $\sigma(X+\alpha):=X-\alpha$. Since $\sigma$ is an involution and a Courant anti-isometry, the pair $\phi$ is uniformly integrable if and only if the following r conditions are satisfied:
\begin{enumerate}[1)]
\item $\lbra L_{++},L_{--}\rbra\subseteq L_{++}\oplus L_{--}=E_+\oplus g_1(E_+)$;
\item $\lbra L_{+-},L_{-+}\rbra\subseteq L_{+-}\oplus L_{-+}=E_-\oplus g_1(E_-)$;
\item $\lbra L_{++},L_{+-}\rbra\subseteq L_{++}\oplus L_{+-}=\Gamma_{g_1}$;
\item $\lbra L_{++},L_{-+}\rbra\subseteq L_{++}\oplus L_{-+}=\Gamma_{g_2}$;
\end{enumerate}
where $\Gamma_g$ denotes the graph of $g:TM\rightarrow T^*M$. We analyze these conditions separately:
\begin{enumerate}[1)]
\item For all $X,Y\in \Gamma(E_+)$, the generalized vector
$$\lbra X+g_1(X),Y-g_1(Y)\rbra=[X,Y]-i_Xd(g_1(Y))-i_Yd(g_1(X))-d(g_1(X,Y))$$
is a section of $E_+\oplus g_1(E_+)$ if and only if
$$[X,Y]\in \Gamma(E_+)$$
and
$$-i_Zi_Xd(g_1(Y))-i_Zi_Yd(g_1(X))-i_Zd(g_1(X,Y))=0
\mbox{ for all } Z\in \Gamma(E_-).$$
This second condition
is equivalent to
$$g_1(Y,[X,Z])+g_1(X,[Y,Z])+Zg_1(X,Y)=0,$$
that is, $\left({\mathcal L}_Zg_1\right)(X,Y)=0$;
\item Similarly, for all $X,Y\in \Gamma(E_-)$, the generalized vector
$$\lbra X+g_1(X),Y-g_1(Y)\rbra=[X,Y]-i_Xd(g_1(Y))-i_Yd(g_1(X))-d(g_1(X,Y))$$
is a section of $E_-\oplus g_1(E_-)$ if and only if
$$[X,Y]\in \Gamma(E_-)$$
and
$$i_Zi_Xd(g_1(Y))+i_Zi_Yd(g_1(X))+i_Zd(g_1(X,Y))=0
\mbox{ for all } Z\in \Gamma(E_+).$$
This second condition 
is equivalent to
$$-Zg_1(X,Y)-g_1(Y,[X,Z])-g_1(X,[Y,Z])=0,$$
that is, $\left({\mathcal L}_Zg_1\right)(X,Y)=0$;
\item Now, for all $X\in \Gamma(E_+)$ and $Y\in \Gamma(E_-)$ the generalized vector
$$\lbra X+g_1(X),Y+g_1(Y)\rbra=[X,Y]+i_Xd(g_1(Y))-i_Yd(g_1(X))$$
is a section of $\Gamma_{g_1}$ if and only if, for all $Z\in \Gamma(E_+)$ and all $W\in \Gamma(E_-)$,
$$g_1([X,Y],Z)=-g_1(Y,[X,Z])-Yg_1(X,Z)+g_1(X,[Y,Z])$$
and
$$g_1([X,Y],W)=Xg_1(Y,W)-g_1(Y,[X,W])-g_1(X,[Y,W]).$$
That is, $\left(\mathcal{L}_Yg_1\right)(X,Z)+g_1(Y,[X,Z])=0$ and
$({\mathcal L}_Xg_1)(Y,W)-g_1(X,[Y,W])=0$;
\item Finally, let us show that $\lbra L_{++},L_{-+}\rbra\subseteq \Gamma_{g_2}$.\\\\
For all $X\in \Gamma(E_+)$ and $Y\in \Gamma(E_-)$ we have that 
$$\lbra X+g_1(X),Y-g_1(Y)\rbra=[X,Y]-i_Xd(g_1(Y))-i_Yd(g_1(X))$$
is a section of $\Gamma_{g_1}$ if and only if, for all $Z\in \Gamma(E_+)$ and all $W\in \Gamma(E_-)$,
$$g_1([X,Y],Z)=g_1(Y,[X,Z])-Yg_1(X,Z)+g_1(X,[Y,Z])$$
and
$$-g_1([X,Y],W)=-Xg_1(Y,W)+g_1(Y,[X,W])+g_1(X,[Y,W]).$$
That is, $\left({\mathcal L}_Yg_1\right)(X,Z)-g_1(Y,[X,Z])=0$ and $\left({\mathcal L}_Xg_1\right)(Y,W)-g_1(X,[Y,W])=0$.
\end{enumerate}
\end{proof}
\begin{rem}
While the distributions $E_{\pm}$ are involutive if and only if $(g_1, T)$ is a \emph{metric product structure}, the pair $\phi$ is uniformly integrable if and only if  $(g_1, T)$ is a metric product structure and additionally the metric $g_1$ is \emph{bundle-like} with respect to each foliation $E_{\pm}$. Moreover, the bundle-like conditions are equivalent to
$$\nabla_{\Gamma(E_-)}\Gamma(E_+)\subseteq \Gamma(E_+),\qquad \nabla_{\Gamma(E_+)}\Gamma(E_-)\subseteq \Gamma(E_-),$$
where $\nabla$ is the (uniquely defined) Levi-Civita connection of the metric $g_1$.
\end{rem}

\subsection{Uniformly integrable $N$-tuples of type $(-1,-1,\dots,-1)$}
In this subsection, we will study some properties of uniformly integrable $N$-tuples of type $(-1,-1,\dots,-1)$, and then apply it to the case $N=2$. More specifically, we will consider the case when $\phi_1,\phi_2$ correspond to the Jordan-Chevalley decomposition of a skew-symmetric endomorphism $J$. 

In the following proposition, the symbol $a^{\top}b$ will denote the \lq\lq dot product\rq\rq\ $\sum_ia_ib_i$ of two $N$-tuples $a,b$. The result below links the uniform integrability of a commuting family $\phi$ of skew endomorphisms to the integrability of the linear combinations $c^{\top}\phi$.

\begin{prop} If $\varepsilon=(-1,-1,\dots,-1)$, then, $c^{\top}\phi$ is uniformly integrable (that is, its Courant-Nijenhuis torsion vanishes) for all $c\in\mathbb{R}^N$ if and only if the tensors
$${\mathcal T}^{[s]}_{\phi}:=\sum_{s'\in [s]}{\mathcal T}_{\phi}^{s'}$$
vanish, for all $S^3$-orbits $[s]\in\{1,2,3,\dots,N\}^3/S_3$. In particular, if $\phi$ is uniformly integrable, then so are the endomorphisms $c^{\top}\phi$.

\end{prop}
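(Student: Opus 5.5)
The plan is to reduce the statement to an algebraic identity in $\mathcal R$ using the linearity properties of the polynomial action. First fix $c\in\R^N$ and set $\psi=c^{\top}\phi=\sum_i c_i\phi_i$. This is a single skew-symmetric endomorphism, so $(\psi)\in\mathcal F^1_{(-1)}$. By Theorem \ref{thm: reduce to n=6} with $N=1$ and $\varepsilon=(-1)$, the ideal $\mathcal I_{(-1)}$ is generated by the single polynomial
\[
T^{111}_{(-1)}=(x+y)(y+z)(z+x)=S .
\]
There are no $P^{ij}$ generators, since there are no symmetric entries. Hence $(\psi)$ is uniformly integrable if and only if $S\bullet_{\psi}\tau_C=0$, i.e.\ iff $\mathcal S_\psi=0$, which is the torsion referred to in the statement.

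Next I would compute $S\bullet_\psi\tau_C$ in terms of $\phi$. Combine Proposition \ref{prop: properties}(i), which handles rescaling by ${\rm diag}(c)$, with Proposition \ref{prop: properties}(ii), iterated over the summands. This gives
\[
S\bullet_{c^{\top}\phi}\tau_C=S(c^{\top}x,c^{\top}y,c^{\top}z)\bullet_\phi\tau_C .
\]
Alternatively, one can check this directly on monomials, since $(c^{\top}\phi)^n=\sum$ of products of the commuting $\phi_i$. Expanding the product trilinearly gives
\[
(c^{\top}x+c^{\top}y)(c^{\top}y+c^{\top}z)(c^{\top}z+c^{\top}x)=\sum_{i,j,k}c_ic_jc_k\,(x_i+y_i)(y_j+z_j)(z_k+x_k)=\sum_{i,j,k}c_ic_jc_k\,T^{ijk}_\varepsilon .
\]
Grouping the triples $(i,j,k)$ by their $S_3$-orbits $[s]$ then yields
\[
\langle \mathcal S_{c^{\top}\phi}(\a,\b),\c\rangle=\sum_{[s]}c^{[s]}\,\widetilde{\mathcal T}^{[s]}_\phi(\a,\b,\c),
\]
where $c^{[s]}=c_ic_jc_k$ for any representative $(i,j,k)$ of $[s]$. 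This monomial depends only on the orbit.

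Finally, fix $\a,\b,\c$ and a point of $M$. The right-hand side is then a cubic form in $c\in\R^N$. The monomials $c^{[s]}$ are distinct and linearly independent for distinct orbits, because the multiset $\{i,j,k\}$ determines the orbit. A real polynomial vanishes identically on $\R^N$ iff all its coefficients vanish. Therefore $\mathcal S_{c^{\top}\phi}=0$ for all $c$ iff every $\mathcal T^{[s]}_\phi=0$, where nondegeneracy of $\langle\cdot,\cdot\rangle$ lets us pass from the trilinear forms to the bivector-valued tensors. The converse direction is immediate from the same identity.

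For the last assertion: if $\phi$ is uniformly integrable, then every $T^{ijk}_\varepsilon\in\mathcal I_\varepsilon$ lies in $\mathcal Z(\phi)$. So each $\mathcal T^{ijk}_\phi$ vanishes, hence so do their orbit sums, and we conclude by the equivalence just proved.

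The only step requiring care is the substitution identity for $c^{\top}\phi$. We need Proposition \ref{prop: properties}(ii) for sums of several commuting endomorphisms, not just two. I would handle this by induction on the number of summands, using the composite action. Commutativity of the $\phi_i$ ensures that the composite actions agree with $\bullet_\phi$.
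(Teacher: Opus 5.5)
Your proposal is correct and follows the paper's proof essentially step for step. Both arguments use the substitution identity $S\bullet_{c^{\top}\phi}\tau_C=S(c^{\top}x,c^{\top}y,c^{\top}z)\bullet_\phi\tau_C$, obtained by iterating Proposition \ref{prop: properties}; both expand this as $\sum_{i,j,k}c_ic_jc_k\,T^{ijk}_\varepsilon$, group the terms by $S_3$-orbits, and compare coefficients of the resulting cubic form in $c$. Your factor-by-factor expansion of the product of linear forms is cleaner than the paper's term-by-term one. You also make explicit two points the paper leaves implicit: for $N=1$, $\varepsilon=(-1)$, uniform integrability is exactly $\mathcal S_\psi=0$ because $\mathcal I_{(-1)}=\langle S\rangle$; and the ``in particular'' clause follows from $T^{ijk}_\varepsilon\in\mathcal I_\varepsilon\subseteq\mathcal Z(\phi)$.
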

\begin{proof} By an iterated application of Proposition \ref{prop: properties}, for all $P\in\mathcal{R}_1$, we have
\begin{align*}
P\bullet_{c^{\top}\phi}\tau_C&=P(x_1+\dots+x_N,y_1+\dots+y_N,z_1+\dots+z_N)\bullet_{{\rm diag}(c)\phi}\tau_C\\
&=P(c^{\top}x,c^{\top}y,c^{\top}z)\bullet_{\phi}\tau_C .
\end{align*}
Now, let $P(x,y,z)=(x+y)(y+z)(z+x)=x^2(y+z)+y^2(x+z)+z^2(x+y)+2xyz$. Then
$$P(c^{\top}x,c^{\top}y,c^{\top}z)=$$$$=\sum_{i,j,k}c_ic_jc_k\left(x_ix_k(y_j+z_j)+y_iy_j(x_k+z_k)+z_jz_k(x_i+y_i)+x_iy_jz_k+x_ky_iz_j\right)=$$
$$=\sum_{i,j,k}c_ic_jc_kT_{\epsilon}^{ijk}(x,y,z)=\sum_{[s]}c_sT_{\epsilon}^{[s]}(x,y,z),$$
where we denote
$$T^{[s]}_{\epsilon}:=\sum_{s'\in[s]}T_{\epsilon}^{s'}$$
and $c_s:=c_ic_ic_j$ for all $s=(i,j,k)\in \{1,2,3,\dots, N\}^3$. It follows that the Courant-Nijenhuis torsion of  $c^{\top}\phi$ equals
$$\mathcal{S}_{c^{\top}\phi}=\sum_{[s]}c_s{\mathcal T}_{\phi}^{[s]}.$$
Finally, note that the above summation vanishes for all $c\in\R^N$ if and only if $\mathcal{T}_{\phi}^{[s]}=0$ for all $s\in\{1,2,\dots, N\}^3$, proving the result.
\end{proof}
\begin{rem} Note that for all $\sigma\in S_3$ we have $\sigma T^s=T^{\sigma's}$ where $\sigma'=(132)\sigma(123)$. This implies that $\sigma T^{[s]}=T^{[s]}$ and from Proposition \ref{prop: poly properties} and Remark \ref{rem: alternating} follows that the tensors $\mathcal{T}^{[s]}_{\phi}$ are alternating.
\end{rem}
\begin{example}Let $J:\T M\rightarrow \T M$ be a skew endomorphism for which the Jordan decomposition holds, i.e.\ $J=\phi_1+\phi_2$ where $\phi=(\phi_1, \phi_2)$ is a commuting pair of signature $(-1,-1)$ with $\phi_1$ semisimple and $\phi_2$ nilpotent. In general, the integrability  of $J$ is independent of the simultaneous integrability of $\phi_1$ and $\phi_2$. In fact, the Courant-Nijenhuis torsion splits as follows:
$$\mathcal{S}_J=\mathcal{T}_{\phi}^{[111]}+\mathcal{T}_{\phi}^{[112]}+\mathcal{T}_{\phi}^{[122]}+\mathcal{T}_{\phi}^{[222]}=$$
$$=\mathcal{S}_{\phi_1}+\mathcal{T}_{\phi}^{[112]}+\mathcal{T}_{\phi}^{[122]}+\mathcal{S}_{\phi_2}.$$
Therefore, if $\mathcal{T}_{\phi}^{[112]}+\mathcal{T}_{\phi}^{[122]}=0$, the integrability of $\phi_1$ and $\phi_2$ implies the integrability of $J$. This fact is  analogous to a result obtained in \cite{AldiGrandini22} which links the \emph{minimality} of $J$ to the minimality of $\phi_1$ and $\phi_2$. In particular, the tensor
$\mathcal{T}_{\phi}^{[112]}+\mathcal{T}_{\phi}^{[122]}$ plays a role analogous of the \emph{bivariate Courant-Nijenhuis torsion} of $\phi_1$ and $\phi_2$, which is a tensor provided that $\phi_1\phi_2=0$. 

In fact, we can find an explicit relation between the bivariate torsion and  $\mathcal{T}_{\phi}^{[112]}+\mathcal{T}_{\phi}^{[122]}$. Indeed, the bivariate torsion is given by 
$$\mathcal{B}_{\phi}=\left[(x_1+z_1)(y_2+z_2)+(x_2+z_2)(y_1+z_1)\right]\bullet_{\phi}\tau_C$$
so that
$$(x_1+y_1)\bullet_{\phi}\mathcal{B}_{\phi}=\mathcal{T}_{\phi}^{121}+\mathcal{T}_{\phi}^{112}$$
and 
$$(x_2+y_2)\bullet_{\phi}\mathcal{B}_{\phi}=\mathcal{T}_{\phi}^{221}+\mathcal{T}_{\phi}^{212}.$$
Also,
$$z_1\bullet_{\phi}\mathcal{B}_{\phi}=\mathcal{T}_{\phi}^{211}+\left[z_1z_2(x_1+y_1)-x_1x_2(y_1+z_1)-y_1y_2(x_1+z_1)+2z_1^2z_2\right]\bullet_{\phi}\tau_C$$
and 
$$z_2\bullet_{\phi}\mathcal{B}_{\phi}=\mathcal{T}_{\phi}^{122}+\left[z_1z_2(x_2+y_2)-x_1x_2(y_2+z_2)-y_1y_2(x_2+z_2)+2z_1z_2^2\right]\bullet_{\phi}\tau_C\,.$$
Assuming $\phi_1\phi_2=0$ we obtain
$$(x_1+y_1+z_1+x_2+y_2+z_2)\bullet_{\phi}\mathcal{B}_{\phi}=\mathcal{T}_{\phi}^{[112]}+\mathcal{T}_{\phi}^{[122]}.$$
\end{example}

A commuting family $\phi$ of skew endomorphisms satisfying $\mathcal{T}^{[s]}_{\phi}=0$ for all $s \in \{1,2,3,\dots,N\}^3/S_3$ need not be uniformly integrable in general. However, this property holds under additional assumptions.

For instance, let $N=2$ and $M=G$ be a Lie group with Lie algebra $\mathfrak{g}$. The space of left-invariant sections of $\T G$ is the Drinfeld double $D(\g):=\g\oplus\g^*$. Moreover, $D(\g)$ is a Lie algebra with respect the Courant-Dorfman bracket, and the restriction of $\tau_C$ to $D(\g)^{\otimes 3}$ is alternating. Furthermore, assume that all endomorphisms $\phi_i$ preserve $D(\g)$. Then, by Proposition \ref{prop: poly properties}, for all $s\in\{1,2,3,\dots N\}$ and $\sigma\in S_3$,
$$\sigma\mathcal{T_\phi}^{s}{\big|_{D(\g)^{\otimes 3}}}=(-1)^{\sigma}\cdot\mathcal{T_\phi}^{\sigma's}{\big|_{D(\g)^{\otimes 3}}}$$
where $\sigma'=(132)\sigma (123)$. This implies, for example, that if $\mathcal{T_\phi}^{s}=0$ for some $s$, then $\mathcal{T_\phi}^{\sigma s}=0$ for all $\sigma\in S_3$. Furthermore, it implies that $\mathcal{T_\phi}^{[s]}=0$. Suppose $N=2$ and assume $\mathcal{T_\phi}^{[112]}=0$ i.e.\ 
$$\mathcal{T_\phi}^{112}=-\mathcal{T_\phi}^{121}-\mathcal{T_\phi}^{211}\,.$$
Then
$$\mathcal{T_\phi}^{112}{\big|_{D(\g)^{\otimes 3}}}=(12)\mathcal{T_\phi}^{112}{\big|_{D(\g)^{\otimes 3}}}+(13)\mathcal{T_\phi}^{112}{\big|_{D(\g)^{\otimes 3}}}$$
which implies that $\mathcal{T_\phi}^{112}=0=\mathcal{T_\phi}^{121}=\mathcal{T_\phi}^{211}$. Similarly,
$\mathcal{T_\phi}^{[122]}=0$ implies
$$\mathcal{T_\phi}^{122}=0=\mathcal{T_\phi}^{212}=\mathcal{T_\phi}^{221}\,.$$
This means that a pair of left invariant skew endomorphisms $(\phi_1,\phi_2)$ of $\T G$ is uniformly integrable if and only if all linear combinations $c_1\phi_1+c_2\phi_2$ have vanishing shifted Courant-Nijenhuis torsion.

\appendix
\section{An Alternate Approach Using Gr\"obner Basis Techniques}\label{app: alternate}

In this appendix, we will explore an alternate technique that could be used to compute $\mathcal{I}_\varepsilon$. We use this method to show that the set of generators $P^{ij}_{\varepsilon}$ and $T^{ijk}_{\varepsilon}$ form a Gr\"obner basis for the signature $\varepsilon=(1,1,\ldots,1)$, allowing us deduce other properties of the ideal coming from the Gr\"obner degeneration. 

\subsection{Gr\"obner Squeeze Theorem }

We will begin with a simplification that reduces the computation of a generating set for the intersection of $I_\varepsilon^x, I_\varepsilon^y$ and $I_\varepsilon^z$ in terms of a Gr\"obner basis computation for the intersection of any two of the three ideals. We start by providing the required technical results to make this reduction precise, and then we proceed to compute specific cases in the next subsection.

\begin{lem}\label{lem: Grobner_subset}\cite[Problem 2.8]{EH} Let $<$ be a monomial order on a polynomial ring $S$. Suppose that $I\subseteq J$ are ideals of $S$ such that $\init_<(I)=\init_<(J)$. Then $I=J$.    
\end{lem}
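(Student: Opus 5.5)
The plan is to argue by contradiction using a minimal-initial-term element, which is the standard argument for this fact. Since $I\subseteq J$, it suffices to prove $J\subseteq I$. Suppose not. Then the set $J\setminus I$ is nonempty, and every element of it is nonzero. Because a monomial order is a well-ordering on monic monomials, we may choose $f\in J\setminus I$ whose leading monomial $\init_<(f)$ is minimal among all leading monomials of elements of $J\setminus I$.

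Next, since $f\in J$, its initial term lies in $\init_<(J)=\init_<(I)$. The ideal $\init_<(I)$ is a monomial ideal, and every monomial in it is divisible by the initial term of some element of $I$. More precisely, because initial ideals of ideals are spanned by leading terms, the leading monomial of $f$ is itself the leading monomial of some $g\in I$. This follows because monic monomials in $\init_<(I)$ are of the form $m\,\init_<(g')$ with $g'\in I$, and we may take $g=m g'$, using multiplicativity of $\init_<$ for monomials. Rescale $g$ so that its leading coefficient matches that of $f$.

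Now set $h=f-g$. It lies in $J$, since $g\in I\subseteq J$. It does not lie in $I$, since otherwise $f=h+g\in I$. So $h\in J\setminus I$, and in particular $h\neq 0$. But the leading terms of $f$ and $g$ cancel, so $\init_<(h)<\init_<(f)$, which contradicts the minimality of $\init_<(f)$. Hence $J\setminus I=\emptyset$, and $I=J$.

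The only point requiring care is the step identifying a monomial of $\init_<(I)$ as an actual leading monomial of an element of $I$ rather than merely a combination of them. This is handled by the observation that a monomial lies in a monomial ideal if and only if it is divisible by one of the generating monomials, together with $\init_<(mg')=m\,\init_<(g')$. Everything else is immediate from well-ordering.
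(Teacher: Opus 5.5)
Your proof is correct and complete. The paper gives no argument of its own for this lemma; it only cites it as \cite[Problem 2.8]{EH}. So there is no in-paper proof to compare against, and your minimal-counterexample argument is the standard solution.

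The other common route goes through Gr\"obner bases. A Gr\"obner basis $\mathcal G$ of $I$ lies in $J$, and its initial terms generate $\init_<(I)=\init_<(J)$, so $\mathcal G$ is also a Gr\"obner basis of $J$. Dividing any $f\in J$ by $\mathcal G$ leaves a remainder in $J$ none of whose terms lies in $\init_<(J)$, so the remainder is zero and $f\in\langle\mathcal G\rangle=I$. That version is shorter if the division algorithm and existence of Gr\"obner bases are already available. Yours is self-contained and shows exactly where the hypothesis on $<$ enters.

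That hypothesis is that $<$ is a well-order, which your choice of $f$ with minimal leading monomial needs. It does not follow from the paper's informal definition in Section~\ref{subsec: Grobner} (a total order compatible with multiplication). For the order $x<1$ on $\mathbb{K}[x]$, one has $\langle x-x^2\rangle\subsetneq\langle x\rangle$, yet both initial ideals equal $\langle x\rangle$. In that case your descent $x\mapsto x-(x-x^2)=x^2\mapsto\cdots$ never terminates. So your explicit appeal to well-ordering is the right and necessary ingredient.
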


Suppose that we have an intersection of ideals $I_1\cap\cdots\cap I_r$ for which we do not have a generating set, and a conjectured Gr\"obner basis for that intersection, say $\mathcal{G} = \{ g_1,\ldots, g_n\}$. If we can find another ideal $J$ which contains $I_1\cap\cdots\cap I_r$ and has an initial ideal equal to the monomial ideal generated by the initial terms of the $g_i$, then $\mathcal{G}$ is a Gr\"obner basis for the intersection. The next proposition, in the special cases where it applies, allows us to bypass the need to compute $S$-polynomials for $I_1\cap\cdots\cap I_r$ if we can find a clever containment of ideals for which computing the Gr\"obner basis is easier. 

\begin{prop}[Gr\"obner Squeeze Theorem]\label{prop: squeeze theorem}
Let $<$ be a monomial order on a polynomial ring $S$. Let $\mathcal{G} = \{ g_1,\ldots, g_n\} \subset I_1\cap\cdots\cap I_r\subset S$. Suppose that $J_1,\ldots J_k$ are ideals in $S$ such that $I_1\cap\cdots\cap I_r\subseteq J_1 \cap \ldots \cap J_k$ and for which 
\[\langle \init_<(g_1),\ldots,\init_<(g_n)\rangle = \bigcap_{i=1}^k\init_<(J_i).\]

\noindent Then $\mathcal{G}$ is a Gr\"obner basis of $I_1\cap\cdots\cap I_r$ with respect to $<$. 

\end{prop}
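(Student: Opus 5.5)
The plan is to chain inclusions of initial ideals and then invoke Lemma \ref{lem: Grobner_subset}. Write $I=I_1\cap\cdots\cap I_r$, $J=J_1\cap\cdots\cap J_k$, and let $G=\langle g_1,\ldots,g_n\rangle$ be the ideal generated by $\mathcal{G}$. Since $\mathcal{G}\subset I$, we have $G\subseteq I$. Being a Gr\"obner basis of $I$ means two things: $\mathcal{G}$ generates $I$, and $\init_<(I)=\langle \init_<(g_1),\ldots,\init_<(g_n)\rangle$. I will prove the equality of initial ideals first and deduce generation from it.

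First, the initial-ideal map is monotone: if $A\subseteq B$ then $\init_<(A)\subseteq\init_<(B)$, because every generator $\init_<(f)$ with $f\in A$ also lies in $\init_<(B)$. The same argument gives $\init_<(J)\subseteq \init_<(J_i)$ for each $i$, and hence $\init_<(J)\subseteq\bigcap_i\init_<(J_i)$. Combining these with $G\subseteq I\subseteq J$ yields the chain
\[
\langle \init_<(g_1),\ldots,\init_<(g_n)\rangle\subseteq \init_<(G)\subseteq\init_<(I)\subseteq \init_<(J)\subseteq \bigcap_{i=1}^k\init_<(J_i).
\]
The hypothesis says that the two ends of this chain coincide, so every inclusion is an equality. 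In particular $\init_<(I)=\langle\init_<(g_1),\ldots,\init_<(g_n)\rangle$, and also $\init_<(G)=\init_<(I)$.

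Next, apply Lemma \ref{lem: Grobner_subset} to the pair $G\subseteq I$, whose initial ideals we have just shown to agree. The lemma gives $G=I$, so $\mathcal{G}$ generates $I$. Together with the equality of initial ideals above, this is exactly the definition of a Gr\"obner basis of $I_1\cap\cdots\cap I_r$.

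The only step requiring care is $\init_<(J_1\cap\cdots\cap J_k)\subseteq\bigcap_i\init_<(J_i)$. This is only an inclusion in general, not an equality, but an inclusion is all the chain needs, so I expect no real obstacle.
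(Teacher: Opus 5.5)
Your proposal is correct and follows essentially the same argument as the paper: the same chain $\langle \init_<(g_1),\ldots,\init_<(g_n)\rangle\subseteq\init_<(\langle\mathcal{G}\rangle)\subseteq\init_<(I_1\cap\cdots\cap I_r)\subseteq\init_<(J_1\cap\cdots\cap J_k)\subseteq\bigcap_i\init_<(J_i)$ collapses to equalities. Lemma \ref{lem: Grobner_subset} then gives $\langle\mathcal{G}\rangle=I_1\cap\cdots\cap I_r$, as in the paper.
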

\begin{proof}

We have the following containment of ideals

\[ \langle \mathcal{G}\rangle \subseteq I_1\cap\cdots\cap I_r \subseteq J_1 \cap \ldots \cap J_k\]

\noindent and \[\langle \init_<(g_1),\ldots,\init_<(g_n)\rangle \subseteq \init_<(\langle \mathcal{G}\rangle)\subseteq \init_<(I_1\cap\cdots\cap I_r)\subseteq \init_<(J_1 \cap \ldots \cap J_k)\subseteq  \bigcap_{i=1}^k\init_<(J_i).\] 

Since $\langle \init_<(g_1),\ldots,\init_<(g_n)\rangle = \bigcap_{i=1}^k\init_<(J_i)$, we must have equality between all initial ideals in the above containment. In particular,  $\init_<(\langle \mathcal{G}\rangle )=\init_<(I_1\cap\cdots\cap I_r)$. By Lemma \ref{lem: Grobner_subset}, $\langle \mathcal{G}\rangle = I_1\cap\cdots\cap I_r$. Furthermore, $\mathcal{G}$ is a Gr\"obner basis of $I_1\cap\cdots\cap I_r$ since $\init_<(I_1\cap\cdots\cap I_r) = \langle\init_<(g_1),\ldots,\init_<(g_n)\rangle$.

\end{proof}

\subsection{A Gr\"obner basis for the case $\varepsilon =(1,1,\ldots,1)$}

We now turn our attention to computing a Gr\"obner basis for $\mathcal{I}_\varepsilon$ for the case $\varepsilon =(1,1,\ldots,1)$ using the technique from the last section. To apply Proposition \ref{prop: squeeze theorem}, we will first need to compute a Gr\"obner basis for $I_\varepsilon^xI_\varepsilon^y$, $I_\varepsilon^xI_\varepsilon^z$ and $I_\varepsilon^yI_\varepsilon^z$. Let us give notation for the generators first introduced in Section \ref{sec: tensorial}. For $i=1,\ldots, N$, define 
\[ f_i:= y_i-z_i \hspace{5mm} g_i:= z_i-x_i \hspace{5mm} h_i:= x_i-y_i.\]

For the purposes of this section, it will be easier to work with the monomial order $\prec_N$ on $\mathcal{R}$ defined by $x_1\succ_N x_2\succ_N\ldots \succ_N x_N\succ_N y_1\succ_N\ldots\succ_N y_N\succ_N z_1\succ_N \ldots \succ_N z_N$. Then $I_\varepsilon^xI_\varepsilon^y$, $I_\varepsilon^xI_\varepsilon^z$ and $I_\varepsilon^yI_\varepsilon^z$ are each naturally generated by $\{f_ig_j\},\{f_ih_j\}$ and $\{g_ih_j\}$ respectively, for $i,j=1,\ldots, N$. Two of these sets are already a Gr\"obner basis for the ideal that they generate. Using this we intend to show that the generating set $T^{ijk}_\varepsilon$ and $P^{ij}_\varepsilon$ from Theorem \ref{thm: reduce to n=6} is a Gr\"obner basis of $\mathcal{I}_\varepsilon$ with respect to $\prec_N$ for $\varepsilon =(1,1,\ldots,1)$ .

With respect to $\prec_N$, it is not difficult to compute 

\[\init_{\prec_N}(T^{ijk}_\varepsilon) = x_ix_ky_j\]

\[\init_{\prec_N}(P^{ij}_\varepsilon) = x_iy_j \text{ if $i<j$}.
\] 

\noindent Similarly, 

\[\init_{\prec_N}(f_ig_j)= -x_jy_i \hspace{1cm}  \init_{\prec_N}(f_ih_j)= x_jy_i \hspace{1cm}  \init_{\prec_N}(g_ih_j) = -x_ix_j. \]

\begin{prop}\label{prop: GB_gens2}
With respect to the monomial order $\prec_N$, the following are Gr\"obner bases for each product ideal:
\[ I_\varepsilon^xI_\varepsilon^y = \langle f_ig_j| 1\leq i,j \leq N\rangle\]
\[ I_\varepsilon^xI_\varepsilon^z = \langle f_ih_j| 1\leq i,j \leq N\rangle\]
\[ I_\varepsilon^yI_\varepsilon^z = \langle g_ih_j, P^{ij}_\varepsilon| 1\leq i,j \leq N\rangle \]

\end{prop}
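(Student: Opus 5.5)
The three product ideals will be handled uniformly with a Hilbert function comparison, which avoids computing $S$-polynomials for the third (harder) ideal. In each case write $I$ for the product ideal and $\mathcal G$ for the proposed generators. We have $\mathcal G\subseteq I$, so $M:=\langle \init_{\prec_N}(g)\mid g\in\mathcal G\rangle\subseteq \init_{\prec_N}(I)$. It then suffices to show that $\mathcal R/M$ and $\mathcal R/\init_{\prec_N}(I)$ have the same Hilbert series: a containment of homogeneous ideals with equal Hilbert functions is an equality. The Hilbert series of $\mathcal R/\init_{\prec_N}(I)$ equals that of $\mathcal R/I$. By the Corollary to Theorem \ref{thm: Knutson_product}, $I$ equals the corresponding intersection, e.g.\ $I_\varepsilon^yI_\varepsilon^z=I_\varepsilon^y\cap I_\varepsilon^z$. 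Hence the exact sequence $0\to \mathcal R/(A\cap B)\to \mathcal R/A\oplus\mathcal R/B\to \mathcal R/(A+B)\to 0$ gives
\[
H_{\mathcal R/I}(t)=\frac{2}{(1-t)^{2N}}-\frac{1}{(1-t)^{N}} .
\]
Here each of $\mathcal R/I^x_\varepsilon,\mathcal R/I^y_\varepsilon,\mathcal R/I^z_\varepsilon$ is a polynomial ring in $2N$ variables. Any sum of two of these ideals (for $\varepsilon=(1,\dots,1)$) is generated by $N$ further independent linear forms, imposing $x_i=y_i=z_i$, so its quotient is a polynomial ring in $N$ variables.

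Next I will compute the initial monomials. Besides those listed before the statement, we need the diagonal ones: $\init(g_ih_i)=-x_i^2$, and similarly for $f_ig_i$ and $f_ih_i$. For $I^y_\varepsilon I^z_\varepsilon$ this gives $M=\langle x_ix_j\ (i\le j),\ x_iy_j\ (i<j)\rangle$. The standard monomials are then of two kinds: monomials in $y,z$ only, and monomials $x_i\cdot m$ where $m$ involves only $y_1,\dots,y_i$ and $z_1,\dots,z_N$. This gives
\[
H_{\mathcal R/M}(t)=\frac{1}{(1-t)^{2N}}+\sum_{i=1}^N\frac{t}{(1-t)^{N+i}},
\]
and the telescoping identity $\frac{t}{(1-t)^{m+1}}=\frac{1}{(1-t)^{m+1}}-\frac1{(1-t)^m}$ makes this agree with the formula above.

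For $I^x_\varepsilon I^y_\varepsilon$ and $I^x_\varepsilon I^z_\varepsilon$ the initial ideal is $M=\langle x_jy_i\mid 1\le i,j\le N\rangle$. The standard monomials are those with no $x$'s or no $y$'s. This gives $2/(1-t)^{2N}-1/(1-t)^N$ by inclusion–exclusion, matching again.

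As a cross-check, or as an alternative for these first two ideals, I can verify Buchberger's criterion directly. For example, $S(f_ig_j,f_ig_l)=\pm f_i(x_lg_j-x_jg_l)=\pm f_i(z_lg_j-z_jg_l)$ is a standard representation with smaller leading terms, and pairs with coprime leading terms reduce by the product criterion. The identities $x_lh_j-x_jh_l=y_lh_j-y_jh_l$ and $y_kf_i-y_if_k=z_kf_i-z_if_k$ handle the remaining cases.

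The main obstacle is the third ideal. There the naive generators $g_ih_j$ and $g_jh_i$ share the leading monomial $x_ix_j$, and the new leading terms $x_iy_j$ come from $P^{ij}_\varepsilon$. The care needed is in correctly identifying $\init_{\prec_N}(P^{ij}_\varepsilon)=x_iy_j$ for $i<j$ and in the standard monomial count. The Hilbert series argument sidesteps the corresponding $S$-pair computations entirely.
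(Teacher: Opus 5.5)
Your proof is correct, but it takes a different route from the paper. The paper verifies Buchberger's criterion directly. It disposes of most $S$-pairs with the coprime-leading-terms criterion and factorizations such as $S(g_ih_j,g_ih_k)=g_iS(h_j,h_k)$, and leaves the remaining cases as a ``tedious'' hand check. It also remarks that the Knutson-ideal results would give the first two ideals. You instead use $M=\langle \init_{\prec_N}(\mathcal G)\rangle\subseteq \init_{\prec_N}(I)$ and compare Hilbert series. You compute $H_{\mathcal R/I}$ from the exact sequence for $A\cap B$, and $H_{\mathcal R/M}$ by counting standard monomials. I checked both sides, including the telescoping sum for $I^y_\varepsilon I^z_\varepsilon$ and the inclusion--exclusion for the other two ideals, and they agree.

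What your approach buys is that the genuinely laborious part of the paper's exercise disappears. These are the $S$-pairs in the third ideal with non-coprime leading terms, such as $S(P^{ij}_\varepsilon,g_ih_l)$ or $S(P^{ij}_\varepsilon,P^{il}_\varepsilon)$. You also get an explicit description of the standard monomials, in the same spirit as the paper's own Gr\"obner Squeeze Theorem in the appendix.

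The price is that you need $H_{\mathcal R/I}$, for which you invoke the Knutson corollary $I^y_\varepsilon I^z_\varepsilon=I^y_\varepsilon\cap I^z_\varepsilon$, and that rests on Seccia's theorem. You can drop that dependence. Since $\langle\mathcal G\rangle\subseteq I^y_\varepsilon I^z_\varepsilon\subseteq I^y_\varepsilon\cap I^z_\varepsilon$, compare $M$ directly with $\init_{\prec_N}(I^y_\varepsilon\cap I^z_\varepsilon)$. Equality of Hilbert series then forces all the initial ideals to coincide, and by the lemma cited from Eisenbud--Harris the three ideals are equal. So your count reproves the corollary for $\varepsilon=(1,\ldots,1)$ rather than depending on it.

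The paper's $S$-pair route needs no information about $I$ beyond the listed generators: no Hilbert series and no product-equals-intersection identity. It pays for this with case analysis, which it leaves unwritten.
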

\begin{proof}
Checking that all $S$-pairs reduce to $0$ is left as an easy (albeit tedious) exercise through several cases. It is also useful noting that in general, if the initial terms between two polynomials are relatively prime, then their $S$-pair automatically reduces to $0$. For instance, $S(g_ih_j, g_ih_k) = g_iS(h_j,h_k)$ and $S(h_j,h_k)$ reduces to $0$ using the above fact. This leaves only a few cases to be checked by hand. One could also use the techniques from Section \ref{subsec: Knutson} (in particular from Remark \ref{rem: Knutson Grobner}) to prove the first two claims.
\end{proof}

\begin{cor}\label{cor: initial ideal intersections}
The initial ideal, with respect to $\prec_N$, for each ideal in Proposition \ref{prop: GB_gens2} is:
\[\init_{\prec_N}(I_\varepsilon^xI_\varepsilon^y) = \langle x_iy_j | 1\leq i,j \leq N\rangle\]
\[\init_{\prec_N}(I_\varepsilon^xI_\varepsilon^z) = \langle x_iy_j | 1\leq i,j \leq N\rangle \]
\[\init_{\prec_N}(I_\varepsilon^yI_\varepsilon^z) =  \langle x_ix_j, x_ky_\ell | 1\leq i,j,k,\ell \leq N , k<\ell\rangle \]

\noindent Furthermore, the intersection $\init_{\prec_N}(I_\varepsilon^xI_\varepsilon^y) \cap \init_{\prec_N}(I_\varepsilon^xI_\varepsilon^z) \cap \init_{\prec_N}(I_\varepsilon^yI_\varepsilon^z) $ is minimally generated by 
\[ \langle x_ix_jy_k,x_\ell y_m | 1\leq i,j,k,\ell,m\leq N, \ell<m, k\leq i,j\rangle.\]

\end{cor}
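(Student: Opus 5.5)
The plan is to read off the three initial ideals directly from the Gr\"obner bases of Proposition \ref{prop: GB_gens2}, and then to compute the intersection of monomial ideals using least common multiples of generators.

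First, by definition of a Gr\"obner basis, the initial ideal of each product ideal is generated by the initial terms of the listed generators. So I only need the leading terms under $\prec_N$, in which every $x$ variable dominates every $y$ variable, which in turn dominates every $z$ variable.
\begin{itemize}
\item For $f_ig_j=(y_i-z_i)(z_j-x_j)$ the leading term is $-x_jy_i$.
\item For $f_ih_j=(y_i-z_i)(x_j-y_j)$ it is $x_jy_i$.
\item For $g_ih_j=(z_i-x_i)(x_j-y_j)$ it is $-x_ix_j$.
\item For $P^{ij}_\varepsilon$ with $i<j$ it is $x_iy_j$, as already noted. Since $P^{ji}_\varepsilon=-P^{ij}_\varepsilon$ and $P^{ii}_\varepsilon=0$, nothing new arises from $i\geq j$.
\end{itemize}
Letting $i,j$ range over $1,\ldots,N$ gives the three displayed initial ideals, up to the irrelevant signs.

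For the intersection I will use the standard fact that the intersection of two monomial ideals is generated by the pairwise lcm's of their monomial generators. The first two initial ideals coincide; call this ideal $A=\langle x_ay_b\rangle$, and let $B=\langle x_ix_j,\ x_ky_\ell\ (k<\ell)\rangle$. It therefore suffices to compute $A\cap B$. The lcm's fall into two cases.
\begin{itemize}
\item $\mathrm{lcm}(x_ay_b,x_ix_j)$ equals $x_ix_jy_b$ if $a\in\{i,j\}$. Otherwise it is a multiple of $x_ix_jy_b$. Hence these lcm's contribute exactly the monomials $x_ix_jy_b$ for all $i,j,b$.
\item Consider $\mathrm{lcm}(x_ay_b,x_ky_\ell)$ with $k<\ell$. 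It equals $x_ky_\ell$ when $(a,b)=(k,\ell)$. When $a=k$ and $b\neq \ell$, it is a multiple of $x_ky_\ell$. When $a\neq k$, it is divisible by $x_ax_ky_b$, which is already of the cubic type.
\end{itemize}
Thus $A\cap B=\langle x_ix_jy_k,\ x_\ell y_m\ (\ell<m)\rangle$.

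Finally I will extract the minimal generators. A cubic generator $x_ix_jy_k$ is divisible by some $x_\ell y_m$ with $\ell<m$ exactly when $i<k$ or $j<k$. Such cubics are therefore redundant, and only those with $k\leq i,j$ survive. None of the surviving cubics is divisible by a quadratic generator, by the same criterion. Distinct cubic monomials do not divide one another, and the quadratic generators $x_\ell y_m$ are pairwise incomparable, so the resulting set is minimal. The only step needing real care is the lcm case analysis, which is routine bookkeeping; I expect no genuine obstacle.
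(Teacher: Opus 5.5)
Your proposal is correct and follows the paper's route. You read off the three initial ideals from the Gr\"obner bases of Proposition \ref{prop: GB_gens2}, use the fact that the first two coincide so that only one intersection has to be computed, and discard each cubic $x_ix_jy_k$ with $k>i$ or $k>j$ as a multiple of $x_iy_k$ or $x_jy_k$. Your explicit lcm case analysis fills in the step the paper only calls straightforward.
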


The intersection $\init_{\prec_N}(I_\varepsilon^xI_\varepsilon^y) \cap \init_{\prec_N}(I_\varepsilon^xI_\varepsilon^z) \cap \init_{\prec_N}(I_\varepsilon^yI_\varepsilon^z)$ is straightforward to compute. In fact, we don't need $\init_{\prec_N}(I_\varepsilon^xI_\varepsilon^y) $ in the intersection since it equals $\init_{\prec_N}(I_\varepsilon^xI_\varepsilon^z) $. We include it since other signatures might require the intersection of all three ideals.

Also note that if $k>i$ or $k>j$, then the monomial $x_iy_k$ or $x_jy_k$ is in the intersection, so the generator $x_ix_jy_k$ is redundant. Therefore, to get the unique minimal monomial generating set, we impose the restriction $k\leq i,j$. We are finally ready to show the main result.

\begin{theorem}\label{thm: product_grobner}
Fix $\varepsilon=(1,1,\ldots,1)$ and let $I_\varepsilon^x, I_\varepsilon^y,$ and $I_\varepsilon^z$ be ideals of $\mathcal{R}$ as defined as in Section \ref{sec: tensorial}, endowed with the monomial order $\prec_N$. Then the set
\[\mathcal{G}_\epsilon= \{P^{ij}_{\varepsilon},T^{ijk}_{\varepsilon}|1\leq i,j,k\leq N\} \]

\noindent is a Gr\"obner basis of $\mathcal{I}_{\varepsilon}=I_\varepsilon^x\cap I_\varepsilon^y\cap I_\varepsilon^z$ with respect to $\prec_N$. 
\end{theorem}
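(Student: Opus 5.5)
The plan is to apply the Gr\"obner Squeeze Theorem (Proposition \ref{prop: squeeze theorem}) with $r=3$, $I_1,I_2,I_3=I_\varepsilon^x,I_\varepsilon^y,I_\varepsilon^z$, and $k=3$, taking $J_1=I_\varepsilon^xI_\varepsilon^y$, $J_2=I_\varepsilon^xI_\varepsilon^z$, $J_3=I_\varepsilon^yI_\varepsilon^z$. Three hypotheses have to be checked.

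First, $\mathcal{G}_\varepsilon\subseteq \mathcal{I}_\varepsilon$. This was already observed in Examples \ref{ex:16} and \ref{ex:18} via Theorem \ref{theo:unitens}, since each $T^{ijk}_\varepsilon$ and each $P^{ij}_\varepsilon$ vanishes on $V_\varepsilon$.

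Second, the containment $I_\varepsilon^x\cap I_\varepsilon^y\cap I_\varepsilon^z\subseteq J_1\cap J_2\cap J_3$. By the Corollary following Theorem \ref{thm: Knutson_product}, $J_1=I_\varepsilon^x\cap I_\varepsilon^y$, and similarly for $J_2$ and $J_3$. Hence $J_1\cap J_2\cap J_3$ is in fact equal to the triple intersection.

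Third, the equality of monomial ideals
\[
\langle \init_{\prec_N}(g)\,|\,g\in\mathcal{G}_\varepsilon\rangle=\init_{\prec_N}(J_1)\cap\init_{\prec_N}(J_2)\cap\init_{\prec_N}(J_3).
\]
Corollary \ref{cor: initial ideal intersections} gives the right-hand side as the ideal minimally generated by $x_ix_jy_k$ with $k\le i,j$, together with $x_\ell y_m$ with $\ell<m$.

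For the left-hand side I use the computed leading terms $\init(T^{ijk}_\varepsilon)=x_ix_ky_j$ and $\init(P^{ij}_\varepsilon)=x_iy_j$ for $i<j$. The $P^{ij}$ with $i<j$ produce exactly the generators $x_\ell y_m$ with $\ell<m$. Since $P^{ji}=-P^{ij}$ and $P^{ii}=0$, no other $P$ contributes anything new. Letting $(i,j,k)$ range over all triples, the $T^{ijk}$ produce every monomial $x_ax_by_c$, and in particular all $x_ix_jy_k$ with $k\le i,j$. So the right-hand side is contained in the left-hand side.

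For the reverse inclusion, every leading term $x_ax_by_c$ either has $c\le a,b$, or else is divisible by some $x_ay_c$ with $a<c$, which already lies in the right-hand side. Hence the two monomial ideals agree, and Proposition \ref{prop: squeeze theorem} yields that $\mathcal{G}_\varepsilon$ is a Gr\"obner basis of $\mathcal{I}_\varepsilon$.

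The main obstacle I expect is confirming the leading-term computations under $\prec_N$, especially for $T^{ijk}_\varepsilon$ when indices coincide. Expanding $(x_i-y_i)(y_j-z_j)(z_k-x_k)$, the largest monomial should be $-x_ix_ky_j$, because $x$-variables dominate every $y$ and $z$ regardless of index. I would check that no cancellation occurs when, say, $i=k$ or $i=j$. The coefficient of $x_ix_ky_j$ comes only from choosing $x_i$, $y_j$, $-x_k$, so it is nonzero even when $i=k$. The real content of the argument, however, is carried by Proposition \ref{prop: GB_gens2} and Corollary \ref{cor: initial ideal intersections}, which I am taking as given.
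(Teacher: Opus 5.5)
Your proposal is correct and follows the paper's proof essentially verbatim. Like the paper, you apply the Gr\"obner Squeeze Theorem with $J_1=I_\varepsilon^xI_\varepsilon^y$, $J_2=I_\varepsilon^xI_\varepsilon^z$, $J_3=I_\varepsilon^yI_\varepsilon^z$, use the Knutson corollary for product $=$ intersection, and compare leading terms against Corollary~\ref{cor: initial ideal intersections}. Your explicit check of both inclusions of the monomial ideals (including why $x_ax_by_c$ is redundant when $c>a$ or $c>b$) and of the non-cancellation of $x_ix_ky_j$ in $T^{ijk}_\varepsilon$ only fills in details that the paper asserts in one line.
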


\begin{proof}

Since $\init_{\prec_N}(\mathcal{G}_{\varepsilon})= \{ x_ix_jy_k,x_\ell y_m | 1\leq i,j,k,\ell,m\leq N, \ell<m\}$, by Corollary \ref{cor: initial ideal intersections}, $\init_{\prec_N}(\mathcal{G}_{\varepsilon})=\init_{\prec_N}(I_\varepsilon^xI_\varepsilon^y) \cap \init_{\prec_N}(I_\varepsilon^xI_\varepsilon^z) \cap \init_{\prec_N}(I_\varepsilon^yI_\varepsilon^z)$. The result follows by Proposition \ref{prop: squeeze theorem} applied to $J_1=I_\varepsilon^x\cap I_\varepsilon^y =I_\varepsilon^xI_\varepsilon^y $, $J_2=I_\varepsilon^x\cap I_\varepsilon^z =I_\varepsilon^xI_\varepsilon^z $ and $J_3=I_\varepsilon^y\cap I_\varepsilon^z =I_\varepsilon^yI_\varepsilon^z $.

\end{proof}

\begin{rem}
Using Theorem \ref{thm: product_grobner}, we can immediately recover the fact that $\mathcal{I}_\varepsilon$ is a radical ideal since $\init_{\prec_N}(\mathcal{I}_\varepsilon)$ is a square-free monomial ideal. We could also use information from  $\init_{\prec_N}(\mathcal{I}_\varepsilon)$ to check other properties of $\mathcal{I}_\varepsilon$, like being equidimensional, Cohen-Macaulay, or other properties that are open in flat families. See \cite[Chapter 15]{E} for more information on Gr\"obner degeneration. 
\end{rem}

\section*{Acknowledgements} This work is supported in part by VCU Quest Award ``Quantum Fields and Knots: An Integrative Approach.'' Da Silva's research is supported by NSF LEAPS-MPS Grant 2532757.

\end{document}